\def\racts{\ \rotatebox[origin=c]{90}{$\circlearrowleft$}\ }
\DeclarePairedDelimiter\floor{\lfloor}{\rfloor}
\newtheorem{thm}{Theorem}[section]
\crefname{thm}{Theorem}{Theorems}
\newtheorem{lem}[thm]{Lemma}
\crefname{lem}{Lemma}{Lemmas}
\newtheorem{conj}[thm]{Conjecture}
\crefname{conj}{Conjecture}{Conjectures}
\newtheorem{claim}[thm]{Claim}
\crefname{claim}{Claim}{Claims}
\newtheorem{prop}[thm]{Proposition}
\crefname{prop}{Proposition}{Propositions}
\newtheorem{cor}[thm]{Corollary}
\crefname{cor}{Corollary}{Corollaries}
\crefname{property}{Property}{Properties}
\newtheorem{que}[thm]{Question}
\crefname{que}{Question}{Questions}
\theoremstyle{definition}
\newtheorem{defn}[thm]{Definition}
\crefname{defn}{Definition}{Definitions}
\newtheorem{rmk}[thm]{Remark}
\crefname{rmk}{Remark}{Remarks}
\newtheorem*{ack}{Acknowledgements}
\numberwithin{equation}{section}
\newtheorem{ex}[thm]{Example}
\crefname{ex}{Example}{Examples}
\def\C{{\mathbb C}}
\def\Q{{\mathbb Q}}
\def\R{{\mathbb R}}
\def\Z{{\mathbb Z}}
\def\P{{\mathbb P}}
\def\A{{\mathbb A}}
\def\QQ{\overline{\mathbb Q}}
\def\p{{ \mathfrak{p}}}     %%added
\def\O{{ \mathcal{O}}}
\def\m{{ \mathfrak{m}}} 
\def\a{{ \mathfrak{a}}}  
\def\n{{ \mathfrak{n}}}  
\def\I{{ \mathcal{I}}}
\def\J{{ \mathcal{J}}}
\def\F{{ \mathcal{F}}}
\def\KK{ \overline{K}}
\DeclareMathOperator{\pr}{pr}
\DeclareMathOperator{\id}{id}
\DeclareMathOperator{\Spec}{Spec}
\DeclareMathOperator{\Exc}{Exc}
\DeclareMathOperator{\Supp}{Supp}
\DeclareMathOperator{\mult}{mult} 
\DeclareMathOperator{\lct}{lct} 
\DeclareMathOperator{\codim}{codim}
\newenvironment{claimproof}[0]
  {%
   \paragraph{\it Proof.}%
  }
  {%
    \hfill$\blacksquare$%
  }
\title[Growth of local heights]
{Growth of local height functions along orbits of self-morphisms on projective varieties}
\author{Yohsuke Matsuzawa}
\address{Department of Mathematics, Rikkyo University, 3-34-1 Nishi-Ikebukuro, Toshima-ku, Tokyo, 171-8501 JAPAN}
\email{\href{mailto:matsuzaway@rikkyo.ac.jp}{matsuzaway@rikkyo.ac.jp}}
\begin{document}

\begin{abstract}
We consider the limit 
\[
\lim_{n\to \infty} \sum_{v\in S} \lambda_{Y,v}(f^{n}(x))/h_{H}(f^{n}(x))
\]
where $f \colon X \longrightarrow X$ is a surjective self-morphism on a smooth projective variety $X$
over a number field, $S$ is a finite set of places,
$ \lambda_{Y,v}$ is a local height function associated with a proper closed subscheme $Y \subset X$,
and $h_{H}$ is an ample height function on $X$.
We give a geometric condition which ensures that the limit is zero,
unconditionally when $\dim Y=0$ and assuming Vojta's conjecture when $\dim Y\geq1$. 
In particular, we prove (one is unconditional, one is assuming Vojta's conjecture) Dynamical Lang-Siegel type theorems, 
that is, the relative sizes of coordinates of orbits on $\P^{N}$ are asymptotically the same with 
trivial exceptions.
These results are higher dimensional generalization of Silverman's classical result.
\end{abstract}

\maketitle

\setcounter{tocdepth}{1}
\tableofcontents

\section{Introduction}

In this paper, an {\it algebraic scheme} over a field $k$ means a separated scheme of finite type over $k$.
A {\it variety} over $k$ is an algebraic scheme over $k$ which is irreducible and reduced.
A {\it nice variety} over $k$ is a smooth projective geometrically irreducible scheme over $k$.

Let $K$ be a number field.
The following question has a fundamental importance in the study of arithmetic dynamics 
of self-morphisms of (higher dimensional) algebraic varieties.

\begin{que}\label{mainQ}
Let $X$ be a nice variety over $K$ and $f \colon X \longrightarrow X$ be a surjective morphism.
Let $Y \subset X$ be a proper closed subscheme of $X$.
Let us fix a local height function $\{ \lambda_{Y,v} \}_{v \in M_{K}}$ associated with $Y$,
a global height function $h_{Y}$ associated with $Y$, and 
a height function $h_{H}$ associated with an ample divisor $H$ on $X$.
Let $x \in X(K)$ be a point. 
\begin{enumerate}
\item\label{q:glhtvsamp}
Under what conditions on $f, Y$, and $x$ do we have
\begin{align*}
\lim_{n \to \infty}\frac{h_{Y}(f^{n}(x))}{h_{H}(f^{n}(x))} =0 \quad \text{\rm ?}
\end{align*}

\item\label{q:lochtvsamp}
Let $S \subset M_{K}$ be any finite set of places.
Under what conditions on $f, Y$, and $x$ do we have
\begin{align*}
\lim_{n \to \infty}\frac{\sum_{v\in S} \lambda_{Y,v}(f^{n}(x))}{h_{H}(f^{n}(x))} =0 \quad \text{\rm ?}
\end{align*}

\end{enumerate}
\end{que}

Let us give some comments on \cref{mainQ}.
Since $h_{Y}$ and $ \lambda_{Y,v}$ are getting smaller as $\dim Y$ is getting smaller,
the limits in the question tend to be $0$ for smaller dimensional $Y$.
When $\codim Y\geq 2$, it would be reasonable to expect that $h_{Y}(f^{n}(x))$
does not get lager as the ample height $h_{H}(f^{n}(x))$ grows (with trivial exceptions).
Any positive answer to \cref{mainQ}(\ref{q:glhtvsamp}) would be helpful to understand,
for example, the growth of functions which are pull-backs of ample height functions by rational maps
($Y$ would be the indeterminacy locus of the rational map). 
Also, as pointed out by Silverman, when, 
\begin{align*}
&X=\P^{2}_{\Q} \quad \text{and} \\
& f \colon \P^{2} \longrightarrow \P^{2}; (x:y:z) \mapsto (ax:by:z)
\end{align*}
where $a,b$ are multiplicatively independent integers,
and $Y=\{(1:1:1)\}$, then
\[
\lim_{n\to \infty} \frac{h_{Y}(f^{n}(1:1:1))}{h_{H}(f^{n}(1:1:1))}=0
\]
is equivalent to 
\[
\lim_{n \to \infty}\frac{\log \gcd(a^{n}-1,b^{n}-1)}{n}=0
\]
which is a highly non-trivial theorem in Diophantine approximation by Bugeaud, Corvaja, and Zannier \cite{bcz}.

On the other hand, when $\codim Y=1$, answers to  \cref{mainQ}(\ref{q:glhtvsamp}) could be
purely algebro geometric.
If $Y$ can be (birationally) contracted to a higher codimensional subvariety, 
the problem reduces to a higher codimensional one (for possibly rational self-maps).
If $Y$ has enough positivity (e.g.\ ample), the limit in \cref{mainQ}(\ref{q:glhtvsamp}) would never be zero.

While \cref{mainQ}(\ref{q:glhtvsamp}) seems not to be studied extensively so far,
there are some works related to \cref{mainQ}(\ref{q:lochtvsamp}) (when $Y$ is a divisor).
When $Y$ is an ample divisor, a positive answer to \cref{mainQ}(\ref{q:lochtvsamp})
implies there are only finitely many $S$-integral points with respect to $Y$ in the $f$-orbit of $x$.
For example, when $X=\P^{1}$, Silverman proved that the limit in \cref{mainQ}(\ref{q:lochtvsamp}) 
is zero with trivial exceptions \cite{sil93}.
For $X=\P^{N}$, there are works by Yasufuku \cite{yas1,yas2} under the assumption of Vojta's conjecture.

In this paper, we focus on \cref{mainQ} (\ref{q:lochtvsamp}). 
We give sufficient conditions for the limit to be zero in terms of geometry,
completely unconditional when $\dim Y=0$ and assuming  Vojta's conjecture when $\dim Y>0$.
Let us introduce some notion which we need to state our main theorems.

\subsection{Arithmetic degrees}

\begin{defn}
Let $K$ be a number field and $X$ be a nice variety over $K$.
Let $h_{H}$ be a height function on $X$ associated with an ample divisor $H$ on $X$.
Let $f \colon X \longrightarrow X$ be a surjective morphism.
For any point $x \in X(\KK)$, \emph{the arithmetic degree of $f$ at $x$} is 
\[
\alpha_{f}(x):= \lim_{n\to \infty} \max\{1, h_{H}(f^{n}(x))\}^{1/n}.
\]
This limit always exists and is independent of the choice of $H$ and $h_{H}$
(cf. \cite{ks3, ks1}) .
\end{defn}

\begin{rmk}
Let $d_{1}(f)$ be the first dynamical degree of $f$, i.e.\ 
the maximum modulus of eigenvalues of $f^{*} \colon N^{1}(X_{\KK}) \longrightarrow N^{1}(X_{\KK})$,
where $N^{1}(X_{\KK})$ is the group of divisors modulo numerical equivalence.
Then it is known that $ \alpha_{f}(x) \leq d_{1}(f)$ for all $x \in X(\KK)$ (\cite{ks1,ma}) and
conjectured that the equality holds if $x$ has Zariski dense $f$-orbit (Kawaguchi-Silverman conjecture).
\end{rmk}

\begin{ex}
When $X= \P^{N}_{K}$, $ \alpha_{f}(x)= d_{1}(f)$ if the $f$-orbit of $x$ is infinite and $ \alpha_{f}(x)=1$ otherwise.
Here the first dynamical degree $d_{1}(f)$ is just the degree of the coprime homogeneous polynomials
defining $f$ in this case.
\end{ex}

Arithmetic degree measures the asymptotic growth rate of $h_{H}(f^{n}(x))$.
Indeed, we have the following due to Sano.

\begin{prop}[{\cite[Theorem 1.1]{sano18}}]\label{prop:growthht}
Suppose $ \alpha_{f}(x) >1$.
Then there are a non-negative integer $l$ and positive real numbers $C_{1}, C_{2}$ such that
\[
C_{1}n^{l} \alpha_{f}(x)^{n} \leq \max\{1, h_{H}(f^{n}(x))\} \leq C_{2} n^{l} \alpha_{f}(x)^{n} 
\]
for all $n\geq 1$. 
\end{prop}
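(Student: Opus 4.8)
The plan is to reduce the statement to the linear-algebraic behaviour of the action $f^*$ on a suitable finite-dimensional space of divisor classes, following the strategy that controls $h_H(f^n(x))$ by comparing it with heights attached to a whole $f^*$-invariant collection of ample-ish classes. First I would replace $H$ by a convenient ample divisor: since $\alpha_f(x)$ is independent of the choice of $H$, and since any two ample heights are comparable up to $O(1)$ additive and $O(1)$ multiplicative error via Northcott/functoriality, it suffices to prove the estimate for one well-chosen $H$. The key point is that $\alpha_f(x) \le d_1(f)$, and more precisely the number $\alpha_f(x)$ must be an eigenvalue of $f^*$ acting on $N^1(X_{\KK})$ whose generalized eigenspace contains a nef class; here $l+1$ will be the size of the largest Jordan block of $f^*$ for the eigenvalue $\alpha_f(x)$ relevant to $x$.

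The heart of the argument: fix a nef $\R$-divisor class $\eta$ that spans (together with its $f^*$-iterates) a subspace on which $f^*$ acts with spectral radius $\alpha := \alpha_f(x)$ and a Jordan block of size $l+1$. Working in $N^1(X_{\KK})\otimes\R$, one can choose classes $\eta_0,\dots,\eta_l$ (e.g. obtained from $\eta$ and the nilpotent part of $f^*-\alpha\,\mathrm{id}$ on this block) and positive constants so that, writing $\vec{v}_n = (h_{\eta_0}(f^n x),\dots,h_{\eta_l}(f^n x))$, the functoriality of heights, $h_{f^*D}(x) = h_D(f(x)) + O(1)$, together with the triangle-comparison of heights for numerically equivalent divisors, yields a vector recursion $\vec v_{n+1} = A\vec v_n + O(1)$ where $A$ is (conjugate to) the Jordan block $\alpha I + N$. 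Iterating, $\vec v_n = A^n \vec v_0 + O\bigl(\sum_{k<n} A^k\bigr)$, and since $\|A^n\|\asymp n^l\alpha^n$ while the error term is of the same order (as $\alpha>1$), one gets $h_\eta(f^n x) \le C_2 n^l \alpha^n$. For the lower bound one uses that $\eta$ (or a positive combination of the $\eta_i$) dominates an ample class up to scaling, so $h_H(f^n x) \ge c\, h_\eta(f^n x) - O(1)$, and that the leading coordinate of $\vec v_n$ cannot be swamped: because $\alpha_f(x)$ genuinely equals the growth rate (by definition of $\alpha_f$ as the limsup of $\max\{1,h_H(f^n x)\}^{1/n}$), the top Jordan coordinate is bounded below by $C_1 n^l\alpha^n$ for infinitely many $n$, and a short propagation argument using the recursion upgrades "infinitely many $n$" to "all $n\ge 1$". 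Finally one records that the Jordan-block size $l$ depends on which eigenspace the orbit of $x$ "sees", hence on $x$, whereas $A$, the ambient comparison constants, and the $O(1)$'s depend only on $f$ and the fixed height data, giving the claimed uniformity of $C_1, C_2$ in $x$.

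The main obstacle is establishing the clean vector recursion $\vec v_{n+1} = A\vec v_n + O(1)$ with the error uniform in $n$ and $x$: heights of $\R$-divisor classes are only defined up to bounded functions, and passing through numerical (rather than linear) equivalence costs further bounded comparison terms, so one must be careful that all these bounded errors are genuinely independent of $n$ — they are, since they come from a fixed finite set of divisors and morphisms — and then check that accumulating $n$ copies of an $O(1)$ error stays within $O(n^l\alpha^n)$, which is immediate once $\alpha>1$. A secondary subtlety is arranging the nef class $\eta$ inside the correct generalized eigenspace; this is where one invokes the theory behind $\alpha_f(x)\le d_1(f)$ (the references \cite{ks1,ma}) to know that the growth rate is dictated by a Jordan block meeting the nef cone, so that such an $\eta$ exists and the lower bound is not vacuous.
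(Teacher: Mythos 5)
This proposition is cited verbatim from Sano \cite[Theorem 1.1]{sano18} and is not proved in the present paper, so there is no internal argument to compare against. Your sketch -- Jordan decomposition of $f^*$ on divisor classes, functoriality of heights giving a vector recursion $\vec v_{n+1} = A\vec v_n + O(1)$, and reading off $\asymp n^l\alpha^n$ from $\|A^n\|$ -- is the standard mechanism, also underlying the constructions in \cite{ks1,ks3}.

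However, the genuinely subtle part of the statement, namely that $C_1$ and $C_2$ depend only on $f$ while only $l$ is allowed to vary with $x$, is not established by your argument. If $A$ is the Jordan block $\alpha I + N$ of size $l+1$, then $A^n = \sum_{j\le l}\binom{n}{j}\alpha^{n-j}N^j$, so the leading coordinate of $A^n\vec v_0$ is $\binom{n}{l}\alpha^{n-l}(\vec v_0)_l\bigl(1+o(1)\bigr)$; the multiplicative factor is the last coordinate $(\vec v_0)_l$ of the initial vector, a linear function of the starting heights $h_{\eta_i}(x)$, and it depends on $x$. Writing "the ambient comparison constants and the $O(1)$'s depend only on $f$" handles the accumulated error terms, but says nothing about why $(\vec v_0)_l$ is uniformly bounded above and away from zero, which is exactly what the uniformity of $C_1, C_2$ requires (and which is where Sano's proof does real work, via exact canonical heights and careful bookkeeping of which blocks survive). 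Your "infinitely many $n$ plus propagation" route for the lower bound does not reach this either: the quantity that might be small is the coefficient multiplying the dominant $n^l\alpha^n$, not the frequency of good $n$'s. A secondary issue is your claim that $\alpha_f(x)$ occupies a Jordan block whose generalized eigenspace contains a nef class: the references \cite{ks1,ma} give $\alpha_f(x)\le d_1(f)$ and that $\alpha_f(x)$ is among the moduli of eigenvalues of $f^*$, but they do not produce a nef class inside that particular generalized eigenspace, and the usual argument instead decomposes $H$ over all Jordan blocks and identifies $l$ as the largest block-degree whose canonical heights do not vanish at $x$.
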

%$C_{1}, C_{2}$ depend on height of $x$.

\subsection{Multiplicities}
 
Let $k$ be a field of characteristic zero.

\begin{defn}
Let $f \colon X \longrightarrow Y$ be a finite flat morphism between algebraic schemes.
For a (scheme) point $x \in X$, we define the  \emph{multiplicity of $f$ at $x$} as
\[
e_{f}(x) = l_{ \mathcal{O}_{X,x}}(\mathcal{O}_{X,x}/f^{*} \mathfrak{m}_{f(x)} \O_{X,x}).
\]
Here $\m_{f(x)}$ is the maximal ideal of $\O_{Y,f(x)}$ and 
 $l_{ \mathcal{O}_{X,x}}$ stands for the length as an $ \mathcal{O}_{X,x}$ module.
\end{defn}

The following quantity plays a key role in this paper.

\begin{defn}
Let $f \colon X \longrightarrow X$ be a finite flat self-morphism of an algebraic scheme over $k$.
Let $x \in X$ be a scheme point.
We write
\[
e_{f, +}(x) := e_{+}(x) := \lim_{n\to \infty }e_{f^{n}}(x)^{1/n}
\]
where the existence of the limit is due to Favre (see \cite[Theorem 2.5.8]{fav}, \cite[\S 7]{gig14}, \cite{gig}).  
For a subvariety $P\subset X$ with generic point $\eta$,
we also write $e_{+}(P) = e_{+}(\eta)$.
See \cref{thm:gignac} (\ref{e+}) for more properties of this invariant.
\end{defn}

Note that if $x$ is $f$-periodic of period $r$, this is just the geometric mean of the multiplicities of $f$ on the orbit:
\[
e_{+}(x) = \left(e_{f}(x)e_{f}(f(x))e_{f}(f^{2}(x))\cdots e_{f}(f^{r-1}(x))\right)^{1/r}.
\]

\begin{ex}
If $f$ is unramified, $e_{f^{n}}(x)=1$ for all $x \in X$ and $n \geq 1$.
Hence $e_{f,+}(x) = 1$ for all $x \in X$.
\end{ex}

\begin{ex}
Let $f \colon \P^{1}_{k} \longrightarrow \P^{1}_{k}$ be a surjective morphism.
Let $x \in \P^{1}_{k}$ be a scheme point.
Let $O_{f}(x)$ be the $f$-orbit of $x$.
Then we have
\begin{align*}
e_{f,+}(x)
\begin{cases}
=1 \quad \txt{\small if either $x$ is the generic point, \\
\small $O_{f}(x)$ is infinite, or $O_{f}(x)$ is finite and the cycle\\
\small  does not contain any ramification points of $f$}\\[3mm]
>1 \quad \text{\small otherwise.}
\end{cases}
\end{align*}
\end{ex}

\begin{ex}\label{ex:P2pcf}
Let $f \colon \P^{2}_{k} \longrightarrow \P^{2}_{k}$ be the morphism defined by
\[
f(X:Y:Z) = ((X-2Y)^{2} : X^{2} : (X-2Z)^{2}).
\]
Then the ramification locus of $f$ is the union of three lines:
\[
(X=0) \cup (X=2Y) \cup (X=2Z).
\]
All these three lines are preperiodic under $f$ and the orbits are the following:
 \[
\xymatrix{
(X=2Y) \ar[r] &  (X=0) \ar[r] & (Y=0) \ar[r] & (X=Y) \racts \\
(X=2Z) \ar[r] & (Z=0) \ar[r] & (Y=Z) \ar@/^10pt/[r]<-1ex> & (X=Z) \ar@/^10pt/[l]<-1ex>.
}
\]
By this, we can easily see that if the orbit of a scheme point $x \in \P^{2}_{k}$ 
hits the ramification locus infinitely many times, then $x$ must be equal to the closed point $(0:0:1)$.
Hence we get
\begin{align*}
e_{f,+}(x)=
\begin{cases}
4 \quad \text{if $x=(0:0:1)$}\\
1 \quad \text{otherwise.}
\end{cases}
\end{align*}

\end{ex}

\subsection{Main theorems}
Now we state our main theorems.
Let $K$ be a number field.

The first theorem concerns the case when $\dim Y=0$.
We state the theorem using arithmetic distance functions $\delta_{X,v}(x,y)$ because we want to make it compatible with
a classical theorem in \cite{sil93} by Silverman.
Roughly speaking, arithmetic distance function is $\delta(x,y) \approx -\log (\text{$v$-adic distance between $x$ and $y$})$.
See \cref{sec:lochtdef} for the definition of arithmetic distance function.

\begin{thm}\label{thm:locdistvsht}
Let $X$ be a nice variety over $K$.
Let $f \colon X \longrightarrow X$ be a finite surjective morphism and $S \subset M_{K}$ a finite set.
Let $h_{H}$ be an ample height function on $X$ and $ \delta_{X}$ be an arithmetic distance function on $X$.

Let $x, y \in X(K)$ be points satisfying the following:
\begin{enumerate}
\item $ \alpha_{f}(x)>1$;
\item 
For every $f$-periodic subvariety $P \subset X$ such that $y \in P$, we have $e_{f,+}(P) < \alpha_{f}(x)$.
\end{enumerate}
Then 
\[
\lim_{n\to \infty}\frac{\sum_{v\in S} \delta_{X,v}(f^{n}(x),y)}{h_{H}(f^{n}(x))}=0.
\]
\end{thm}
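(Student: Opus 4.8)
The plan is to pull the problem back finitely many steps along $f$, bound the local height of the finitely many preimages of $y$ by the Subspace Theorem, compare heights along the orbit using \cref{prop:growthht}, and then let the number of steps tend to infinity; the rate at which the resulting bound improves is governed by the growth of the multiplicities $e_{f^m}$ along the fibres $f^{-m}(y)$, and this is exactly where condition~(2) is used.

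\textbf{Reduction to a fibre estimate.} Since $y\in X(K)$, the arithmetic distance $\delta_{X,v}(\,\cdot\,,y)$ and a local height $\lambda_{y,v}$ of the closed point $y$ differ by bounded amounts summing to $O(1)$ over all places, and $h_H(f^n(x))\to\infty$ because $\alpha_f(x)>1$ (so the orbit is infinite, its points all distinct, and $f^n(x)=y$ for at most one $n$). Hence it suffices to show $\sum_{v\in S}\lambda_{y,v}(f^n(x))=o\bigl(h_H(f^n(x))\bigr)$. Fix $m\ge1$. As $X$ is smooth, $f$ and $f^m$ are finite and flat; write $Z_m=(f^m)^{-1}(y)$ for the scheme-theoretic fibre and $Z_m^{\mathrm{red}}$ for its reduction. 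Functoriality of local heights gives $\lambda_{y,v}(f^m(z))=\lambda_{I_{Z_m},v}(z)+O_m(1)$. At each $w\in f^{-m}(y)$ the local ring $\O_{X,w}/I_{Z_m}$ is Artinian of length $e_{f^m}(w)$, hence annihilated by $\m_w^{\,e}$ where $e:=\max_{w\in f^{-m}(y)}e_{f^m}(w)$; therefore $I_{Z_m^{\mathrm{red}}}^{\,e}\subseteq I_{Z_m}$, which gives $\lambda_{I_{Z_m},v}(z)\le e\,\lambda_{Z_m^{\mathrm{red}},v}(z)+O_m(1)$. Summing over $v\in S$ and putting $z=f^{n-m}(x)$:
\[
\sum_{v\in S}\lambda_{y,v}(f^n(x))\ \le\ \Bigl(\max_{w\in f^{-m}(y)}e_{f^m}(w)\Bigr)\sum_{v\in S}\lambda_{Z_m^{\mathrm{red}},v}(f^{n-m}(x))+O_m(1).
\]

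\textbf{The Subspace Theorem and comparison of heights.} Fix a projective embedding $X\hookrightarrow\P^N$. For a finite set $T$ of closed points of $X$ one may choose, for each (distinct) point of $T$, a hyperplane of $\P^N$ through it so that these hyperplanes are in general position; applying Schmidt's Subspace Theorem to them and using $I_{H}\subseteq I_{w}$ for $w\in H$ gives a constant $c=c(X,H)$, independent of $m$ and of $T$, such that for every $\varepsilon>0$,
\[
\sum_{v\in S}\lambda_{T,v}(z)\ \le\ (c+\varepsilon)\,h_H(z)+O_{T,\varepsilon}(1)\qquad\text{for all }z\in X(K)\setminus Z_{T,\varepsilon},
\]
where $Z_{T,\varepsilon}\subsetneq X$ is a proper Zariski-closed subset. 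We may assume the orbit of $x$ is Zariski dense in $X$ (otherwise restrict to its closure, an $f$-invariant subvariety on which $f$ is still finite and surjective and where $y$ may be assumed to lie, the conclusion being trivial otherwise); then $f^{n-m}(x)\notin Z_{Z_m^{\mathrm{red}},\varepsilon}$ for all but finitely many $n$, so $\sum_{v\in S}\lambda_{Z_m^{\mathrm{red}},v}(f^{n-m}(x))\le(c+\varepsilon)h_H(f^{n-m}(x))+O_m(1)$ for $n\gg_m0$. Combining with the previous inequality, dividing by $h_H(f^n(x))$, and using \cref{prop:growthht} in the form $\limsup_{n\to\infty}h_H(f^{n-m}(x))/h_H(f^n(x))\le C'\alpha_f(x)^{-m}$ with $C'$ depending only on $f$ (which also absorbs all the $O_m(1)$ terms), we get, for every $m\ge1$,
\[
\limsup_{n\to\infty}\frac{\sum_{v\in S}\delta_{X,v}(f^n(x),y)}{h_H(f^n(x))}\ \le\ C\cdot\Bigl(\max_{w\in f^{-m}(y)}e_{f^m}(w)\Bigr)\cdot\alpha_f(x)^{-m},
\]
with $C=C(X,H,f)$ independent of $m$.

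\textbf{The multiplicity estimate --- the main obstacle.} It remains to show that condition~(2) forces
\[
\limsup_{m\to\infty}\Bigl(\max_{w\in f^{-m}(y)}e_{f^m}(w)\Bigr)^{1/m}\ <\ \alpha_f(x),
\]
a purely geometric statement which is the crux of the proof. By multiplicativity of the multiplicity under composition, $\max_{w\in f^{-m}(y)}e_{f^m}(w)$ is the maximum of $\prod_{i=1}^{m}e_f(w_i)$ over all backward orbits $y=w_0,w_1,\dots,w_m$ with $f(w_i)=w_{i-1}$, and $e_f(w_i)>1$ only when $w_i\in\Supp R_f$. The key point is that exponential growth of this quantity at rate $\ge\alpha_f(x)$ can only be sustained by backward orbits that eventually ``track'' a periodic part of the ramification: passing to a limit of such backward orbits and using that $R_f$ has only finitely many components --- together with a Noetherian induction on the proper closed loci where the multiplicity jumps, and Favre's theorem providing the limit defining $e_{f,+}$ --- one would produce an $f$-periodic subvariety $P\subseteq R_f$ with $e_{f,+}(P)\ge\alpha_f(x)$ and $y\in\bigcup_{i\ge0}f^i(P)$, contradicting~(2). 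Granting this, the right-hand side of the last displayed inequality tends to $0$ as $m\to\infty$, so $\limsup_{n\to\infty}\sum_{v\in S}\delta_{X,v}(f^n(x),y)/h_H(f^n(x))\le0$; since the numerator is bounded below while $h_H(f^n(x))\to\infty$, the $\liminf$ is nonnegative, and the limit equals $0$.
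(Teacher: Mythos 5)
Your overall strategy --- pull the problem back along $f^m$, bound the resulting local height by a diophantine--approximation theorem, compare $h_H(f^{n-m}(x))$ with $h_H(f^n(x))$ via \cref{prop:growthht}, and let $m\to\infty$ using the decay of the fibre multiplicities --- is the same as the paper's. However, there is one genuine gap and one point that deserves comment.

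\textbf{The main gap: the exceptional set.} You invoke Schmidt's Subspace Theorem, whose exceptional set is only a \emph{proper Zariski-closed} subset $Z_{T,\varepsilon}\subsetneq X$. Consequently you must assume $O_f(x)$ is Zariski dense so that the orbit eventually escapes $Z_{T,\varepsilon}$, and you handle the non-dense case by restricting to $\overline{O_f(x)}$. That reduction does not work as stated: the orbit closure need not be smooth (so the theorem's standing hypothesis of a nice variety fails), $f$ restricted to it need not be flat (so the multiplicities $e_{f^m}$ are not the ones you analyzed), and the ramification divisor and the $e_{f,\pm}$ data in hypothesis~(2) change under restriction, so you cannot simply ``re-run'' the theorem on $\overline{O_f(x)}$. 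The paper avoids all of this by using McKinnon--Roth's theorem for arbitrary varieties (\cref{Roth}), which gives a \emph{finite} exceptional set $Z(y,\epsilon)\subset X(K)$; since $\alpha_f(x)>1$ forces the orbit to be infinite (hence with distinct entries), it automatically escapes any finite set, and no density hypothesis is needed. You could repair your argument with no change in spirit by replacing the Subspace Theorem step with \cref{Roth}: at any place $v$ and any point $\xi$, the triangle inequality for $\delta_{X,v}$ shows at most one preimage $w\in f^{-m}(y)$ can be $v$-adically close to $\xi$, so $\lambda_{Z_m^{\mathrm{red}},v}(\xi)\le\max_w\delta_{X,v}(\xi,w)$ is controlled by a single application of \cref{Roth} per preimage point, with a finite exceptional set.

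\textbf{The multiplicity estimate.} You correctly identify that the crux is
\[
\limsup_{m\to\infty}\Bigl(\max_{w\in f^{-m}(y)}e_{f^m}(w)\Bigr)^{1/m}<\alpha_f(x),
\]
but the argument you offer for it (``passing to a limit of backward orbits \ldots Noetherian induction \ldots one would produce an $f$-periodic subvariety'') is a heuristic, not a proof. This is precisely the content of \cref{thm:gignac} (Favre and Gignac): part (2) shows the $\limsup$ is actually a limit equal to $e_{f,-}(y)$, and part (3) identifies $e_{f,-}(y)$ with $\max\{e_{f,+}(\eta)\}$ over periodic scheme points $\eta$ whose closure contains $y$; combined with the observation that $e_{f,+}>1$ forces some iterate of the subvariety to lie in $R_f$, hypothesis~(2) is exactly $e_{f,-}(y)<\alpha_f(x)$. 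The paper packages this as \cref{multbd}. It is fine to cite this result, but your proposal should be rewritten to rely on it explicitly rather than sketching a re-derivation.

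The rest of your estimates (the inequality $\I_{Z_m^{\mathrm{red}}}^{\,e}\subseteq\I_{Z_m}$ coming from the Artinian length, the comparison $h_H(f^{n-m}(x))/h_H(f^n(x))\le C'\alpha_f(x)^{-m}$ from \cref{prop:growthht}, and the passage to $m\to\infty$) are sound and match the paper's calculation.
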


\begin{rmk}\label{rmk:basicrmk}\

\begin{enumerate}

\item The function $\delta_{X,v}(-,y)$ where $y$ is a fixed $K$-point is just
a local height function $ \lambda_{y,v}$ associated with the closed subset $\{y\}$.

\item If the assumption $ \alpha_{f}(x)>1$ holds, then the orbit $O_{f}(x)$ is infinite and  the first dynamical degree of $f$ is larger than one: 
$d_{1}(f)>1$.
\item If $x$ has Zariski dense $f$-orbit, then the Kawaguchi-Silverman conjecture implies that $ \alpha_{f}(x) = d_{1}(f)$.
\item 
The self-morphism $f$ is called  \emph{int-amplified} if there is an ample divisor $A$ on $X_{\KK}$ such that
$(f_{\KK})^{*}A - A$ is ample. (the subscript $(\ )_{\KK}$ stands for base change.)
This is equivalent to saying that all eigenvalues of $(f_{\KK})^{*} \colon N^{1}(X_{\KK}) \longrightarrow N^{1}(X_{\KK})$
have modulus strictly greater than $1$.
If $f$ is int-amplified, then $ \alpha_{f}(x)>1$ if and only if the $f$-orbit $O_{f}(x)$ of $x$ is infinite (cf. \cite[Theorem 1.1]{sano17}). 

\item \label{rmk:onassumption}
If an $f$-periodic subvariety $P$ with generic point $\eta$ satisfies $e_{f,+}(P)>1$, 
then at least one of $f^{i}(\eta)$ is contained in the ramification divisor $R_{f}$ of $f$, i.e.\ $f^{i}(P) \subset R_{f}$. 
Thus to check the assumption (2), we only have to look at the cycles of periodic subvarieties contained in $R_{f}$.

The assumption (2) is equivalent to $e_{f,-}(y) < \alpha_{f}(x)$.
See \cref{thm:gignac} (\ref{e-}) for the definition of $e_{f,-}$ and  \cref{rmk:twocond} for the equivalence.
Since $e_{f,-}$ is upper semicontinuos (by \cref{thm:gignac}(4), this is due to Gignac) and $e_{f,-}=1$ at the generic point of $X$, 
there is an open dense subset $U \subset X$ such that 
\[
e_{f,-}(y) < \alpha_{f}(x) \quad \text{for $y \in U(K)$}.
\]
In particular, general point $y$ satisfies the assumption.

\item As pointed out in \cite{sil93} (comments below Theorem D),
\cref{thm:locdistvsht} is far from being true in transcendental setting.
That is, for example, if $v$ is an archimedean absolute value such that $K_{v}=\C$,
$x \in X(\C)$, and $h_{H}(f^{n}(x))$ is replaced with $d_{1}(f)^{n}$,
we can also ask if the sequence $ \delta_{X,v}(f^{n}(x), y)/d_{1}(f)^{n}$ goes to zero.
This fails for uncountably many $x$ even for $y$ with $e_{f,-}(y)=1$.
\end{enumerate}

\end{rmk}

For special cases, the assumptions in \cref{thm:locdistvsht} become simple.

\begin{cor}
In the setting of  \cref{thm:locdistvsht}, assume $f$ is {\'e}tale.
Let $x\in X(K)$ be a point such that $ \alpha_{f}(x)>1$.
Then for all $y \in X(K)$, we have
\[
\lim_{n\to \infty}\frac{\sum_{v\in S} \delta_{X,v}(f^{n}(x),y)}{h_{H}(f^{n}(x))}=0.
\]
\end{cor}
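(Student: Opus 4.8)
The corollary asks to derive the étale case from \cref{thm:locdistvsht}. The plan is to verify that when $f$ is étale, assumption (2) of the theorem is vacuously satisfied for every $y \in X(K)$, so that the theorem applies directly.

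\begin{proof}[Proof proposal]
The strategy is simply to check that the hypotheses of \cref{thm:locdistvsht} hold automatically in the étale case. First, since $f$ is étale, it is unramified everywhere, so the ramification divisor $R_{f}$ is empty. Consequently there are no subvarieties $P \subset R_{f}$ at all, and in particular no $f$-periodic subvariety $P \subset R_{f}$ with $e_{f,+}(P) \geq \alpha_{f}(x)$. Hence the condition ``$y \notin f^{i}(P)$ for all $i \geq 0$ and every such $P$'' is vacuously true for every point $y \in X(K)$. Assumption (1), namely $\alpha_{f}(x) > 1$, is given by hypothesis. Therefore \cref{thm:locdistvsht} applies with these $x$ and $y$, yielding
\[
\lim_{n\to \infty}\frac{\sum_{v\in S} \delta_{X,v}(f^{n}(x),y)}{h_{H}(f^{n}(x))}=0,
\]
which is exactly the claim. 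One should also note that $f$ étale and surjective on a projective variety is automatically finite (it is proper and quasi-finite), so the running hypothesis of \cref{thm:locdistvsht} that $f$ be finite surjective is met.

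Alternatively, and equivalently, one can phrase this through the reformulation in Remark (5): assumption (2) is equivalent to $e_{f,-}(y) < \alpha_{f}(x)$. When $f$ is étale, every multiplicity $e_{f}(z)$ equals $1$ (the local rings agree up to the unramified extension, so $\O_{X,x}/f^{*}\m_{f(x)}\O_{X,x}$ has length one), hence $e_{f^{n}}(z) = 1$ for all $n$ and all $z$, giving $e_{f,-}(y) = e_{f,+}(y) = 1 < \alpha_{f}(x)$ by assumption (1). So again the hypotheses of \cref{thm:locdistvsht} are satisfied for all $y \in X(K)$.

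There is essentially no obstacle here: the corollary is a direct specialization, and the only thing to be careful about is citing the right facts (étale $\Rightarrow$ $R_{f} = \emptyset$, or étale $\Rightarrow$ all multiplicities are $1$) and confirming the finiteness of $f$. No new estimates or geometric input beyond \cref{thm:locdistvsht} are needed.
\end{proof}
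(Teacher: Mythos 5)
Your proof is correct and takes exactly the same route as the paper: since $f$ is \'etale, $R_{f}=\emptyset$, so condition (2) of \cref{thm:locdistvsht} holds vacuously and the theorem applies directly. The extra remarks (finiteness of $f$, the equivalent reformulation via $e_{f,-}(y)=1$) are accurate but not needed beyond what the paper's one-line argument already contains.
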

\begin{proof}
Since $f$ is \'etale, we automatically have $e_{f,+}(P)=1$ for all $f$-periodic subvariety $P \subset X$.
Hence the condition (2) in \cref{thm:locdistvsht}  vacuously holds.
\end{proof}

We also recover Silverman's theorem (\cite[Theorem E]{sil93}). 

\begin{cor}
Let $f \colon \P^{1} \longrightarrow \P^{1}$ a surjective morphism over $K$ with $\deg f \geq 2$.
Let $S \subset M_{K}$ be a finite set.
Let $h$ be the naive height on $\P^{1}$ and $ \delta$ be the arithmetic distance function on $\P^{1}$.
Let $x, y \in \P^{1}(K)$ be points and suppose:
\begin{enumerate}
\item the $f$-orbit $O_{f}(x)$ of $x$ is infinite;
\item $y$ is not a totally ramified periodic point of $f$.
\end{enumerate}
Then we have
\[
\lim_{n\to \infty}\frac{\sum_{v\in S} \delta_{v}(f^{n}(x),y)}{h(f^{n}(x))}=0.
\]
\end{cor}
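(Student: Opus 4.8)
The plan is to deduce this corollary from \cref{thm:locdistvsht} by checking that its two hypotheses are satisfied in the one-dimensional setting, after translating the classical language into the paper's framework. Take $X = \P^{1}$, which is certainly a nice variety over $K$; take $f \colon \P^{1} \to \P^{1}$ to be the given surjective morphism of degree $d = \deg f \geq 2$, which is automatically finite and flat since $\P^{1}$ is a smooth curve. Choose $h_{H}$ to be the naive height (associated with a point, hence ample on $\P^{1}$) and $\delta_{X}$ the arithmetic distance function $\delta$. The conclusion of \cref{thm:locdistvsht} is then literally the displayed limit we want, so it suffices to verify conditions (1) and (2) of that theorem.

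For condition (1): by the Example in \S1.2 (the case $X = \P^{N}$), we have $\alpha_{f}(x) = d_{1}(f)$ whenever the $f$-orbit of $x$ is infinite, and $d_{1}(f) = \deg f \geq 2 > 1$ for a self-morphism of $\P^{1}$. Since hypothesis (1) of the corollary says $O_{f}(x)$ is infinite, we get $\alpha_{f}(x) = d \geq 2 > 1$, as required. For condition (2): the ramification divisor $R_{f} \subset \P^{1}$ is a finite set of points, so any subvariety $P \subset R_{f}$ is a single closed point $p$. Such a $p$ is $f$-periodic precisely when it is a periodic point of $f$, and one must compute $e_{f,+}(p)$. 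By the remark following the definition of $e_{+}$, for an $f$-periodic point $p$ of period $r$ we have $e_{f,+}(p) = \left( \prod_{i=0}^{r-1} e_{f}(f^{i}(p)) \right)^{1/r}$, where $e_{f}(q) = l_{\O_{\P^{1},q}}(\O_{\P^{1},q}/f^{*}\m_{f(q)}\O_{\P^{1},q})$ is exactly the classical ramification index $e_{f}(q)$ of $f$ at $q$ (here $X$ and $Y$ are smooth curves, so the local multiplicity is the usual ramification index). For a point to satisfy $e_{f,+}(p) \geq \alpha_{f}(x) = d$, since each $e_{f}(f^{i}(p)) \leq d$, the geometric mean can be $\geq d$ only if every factor equals $d$, i.e. $f$ is totally ramified (of degree $d$) at every point of the orbit of $p$; in particular $p$ itself is a totally ramified periodic point of $f$. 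Conversely any totally ramified periodic point yields such a $P$. Hence the set $\bigcup_{i \geq 0} f^{i}(P)$, as $P$ ranges over $f$-periodic subvarieties of $R_{f}$ with $e_{f,+}(P) \geq \alpha_{f}(x)$, is exactly the forward orbit of the totally ramified periodic points of $f$. But a totally ramified periodic point $p$ has $f(p)$ again totally ramified periodic (if $p$ has period $r$ and is totally ramified, then along the cycle every point is totally ramified), so this union is just the finite set of totally ramified periodic points themselves. Thus condition (2) of \cref{thm:locdistvsht} — that $y \notin f^{i}(P)$ for all such $P$ and $i \geq 0$ — is equivalent to: $y$ is not a totally ramified periodic point of $f$, which is precisely hypothesis (2) of the corollary.

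With both conditions verified, \cref{thm:locdistvsht} applies and gives the desired limit. The only mildly delicate point is the bookkeeping in the last paragraph — confirming that the forward orbit of a totally ramified periodic point consists only of totally ramified periodic points, so that condition (2) collapses to the clean statement about $y$ — but on $\P^{1}$ this is an elementary consequence of the multiplicativity of ramification indices along a periodic cycle and the fact that $\sum_{f(q) = z} e_{f}(q) = d$ forces $e_{f}(p) = d$ to be the unique preimage contribution. No other step requires more than invoking the Example and the remark already established in the excerpt.
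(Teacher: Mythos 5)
Your proposal is correct and follows exactly the paper's route: the paper also deduces the corollary by checking the two hypotheses of Theorem~\ref{thm:locdistvsht}, noting that an infinite orbit gives $\alpha_f(x)=\deg f>1$ and that on $\P^1$ condition (2) is equivalent to $y$ not being a totally ramified periodic point. Your write-up merely supplies the details (subvarieties of $R_f$ are points, the geometric-mean computation forcing every point of the cycle to be totally ramified) that the paper leaves implicit.
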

\begin{proof}
Let $d = \deg f$.
We check the two conditions in \cref{thm:locdistvsht}.
Since $O_{f}(x)$ is infinite, we have $ \alpha_{f}(x)=d >1$ and (1) is satisfied.
The condition (2) in \cref{thm:locdistvsht} is equivalent to saying that $y$ is not a totally ramified periodic point of $f$.
\end{proof}

Next, we move to the case where $\dim Y\geq 0$.
\begin{defn}
A subset $O \subset X$ is called  \emph{generic} if $O\cap Z$ is finite for all proper Zariski closed subsets $Z \subset X$.
\end{defn}

Assuming Vojta's conjecture, we can prove the following.

\begin{thm}\label{thm:divlcht}
Let $X$ be a nice variety over $K$.
Let $f \colon X \longrightarrow X$ be a finite surjective morphism and $S \subset M_{K}$ a finite set.
Let $h_{H}$ be an ample height function on $X$.
Let $Y \subset X$ be a proper closed subscheme and $x \in X(K)$ a point.
Suppose:
\begin{enumerate}
\item
$ \alpha_{f}(x)>1$;
\item
the $f$-orbit $O_{f}(x)$ of $x$ is generic;
\item
For every $f$-periodic subvariety $P$ such that $Y \cap P \neq  \emptyset$, we have $e_{f,+}(P)< \alpha_{f}(x)$.
\end{enumerate}

Assume Vojta's conjecture {\rm (}for blow ups of $X${\rm )}.
Then 
\[
\lim_{n\to \infty} \frac{\sum_{v\in S} \lambda_{Y,v}(f^{n}(x))}{h_{H}(f^{n}(x))}=0.
\]

\end{thm}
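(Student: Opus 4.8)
The plan is to reduce the divisorial case $\dim Y \geq 1$ to a height inequality on a suitable blow-up, where Vojta's conjecture can be applied, and to bootstrap the zero-dimensional result (\cref{thm:locdistvsht}) together with a filtration argument on $Y$. First I would reduce to the case where $Y$ is an effective divisor: writing $Y$ as a closed subscheme, one has $\lambda_{Y,v} \leq \lambda_{Y',v} + O(1)$ for any effective divisor $Y'$ whose support contains $Y$ (more precisely, with $\Supp Y' \supset \Supp Y$ and the ideal sheaf of $Y$ containing that of $Y'$ locally), so it suffices to treat an effective divisor $D$ with $\Supp D$ containing $Y$ and with the property that $D$ still meets no $f$-periodic subvariety $P$ with $e_{f,+}(P) \geq \alpha_f(x)$; since assumption (3) is about the support only, such a $D$ exists. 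The genericity assumption (2) guarantees that $f^n(x)$ eventually avoids any fixed proper closed subset not meeting the orbit, so we may freely discard closed subsets of $X$ along the way.

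Next I would set up the blow-up. Let $\pi \colon \tilde X \to X$ be a log resolution so that $\pi^{-1}(\Supp D)$ together with the exceptional locus is a simple normal crossings divisor $E = \sum E_i$ on a smooth projective $\tilde X$. The point $f^n(x)$ lifts canonically to $\tilde x_n := \pi^{-1}(f^n(x)) \in \tilde X(K)$ once $f^n(x)$ avoids the (finite, by genericity) intersection of the orbit with $\pi(\Exc \pi)$; pulling back local heights, $\sum_{v\in S}\lambda_{Y,v}(f^n(x)) \leq \sum_{v\in S}\lambda_{\pi^*D,v}(\tilde x_n) + O(1)$. Now apply Vojta's conjecture on $\tilde X$ with the divisor $E$ and the canonical divisor $K_{\tilde X}$: for any $\epsilon > 0$, outside a proper closed subset $Z \subset \tilde X$,
\[
\sum_{v\in S} \lambda_{E,v}(\tilde x_n) \leq \epsilon\, h_{A}(\tilde x_n) + h_{K_{\tilde X}}(\tilde x_n) + O(1)
\]
for a fixed ample $A$ on $\tilde X$. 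Again by genericity, $\tilde x_n$ lies outside $Z$ for $n \gg 0$. Since $\pi^*D \leq (\text{bounded multiple of})\,E$ up to effective divisors supported on $E$, and $K_{\tilde X} = \pi^*K_X + (\text{effective exceptional})$, the right-hand side is bounded by $\epsilon\, h_A(\tilde x_n)$ plus an $f$-pullback-controlled height coming from $K_X$ and the exceptional divisors.

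The crux is then to control $h_{K_{\tilde X}}(\tilde x_n)$ and $h_{A}(\tilde x_n)$ against $h_H(f^n(x))$, and this is where assumption (3) and the multiplicity quantity $e_{f,+}$ enter — this is the main obstacle. The idea: $h_A(\tilde x_n) \asymp h_{H}(f^n(x))$ up to bounded error and a term along the exceptional divisors, while the exceptional and ramification contributions grow only like the local heights of $f^n(x)$ along the periodic subvarieties inside $R_f$ that dominate $\Exc \pi$ and $\Supp D$. One shows, using the ramification formula $K_{\tilde X} = \pi^* f^* K_X - \pi^* R_f + (\text{exceptional})$ iterated along the orbit, that the growth rate of $h_{K_{\tilde X}}(\tilde x_n)$ relative to $\alpha_f(x)^n$ is governed by $\max_P e_{f,+}(P)$ over the relevant periodic subvarieties $P \subset R_f$ meeting $\Supp D$; assumption (3) forces all such $e_{f,+}(P) < \alpha_f(x)$, so by \cref{prop:growthht} and the structure of the multiplicity growth (cf. \cref{thm:gignac}), these contributions are $o(h_H(f^n(x)))$. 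Feeding this back, for every $\epsilon>0$ the ratio $\sum_{v\in S}\lambda_{Y,v}(f^n(x))/h_H(f^n(x))$ is eventually bounded by $2\epsilon$, so the limit is zero. The delicate points are making the comparison of exceptional-divisor local heights with the $e_{f,+}$-growth precise (this will likely mirror the argument for \cref{thm:locdistvsht}, applied fiberwise over the periodic subvarieties), and ensuring the finitely many closed subsets to be avoided — from the log resolution, from $Z$ in Vojta's conjecture, and from the indeterminacy of lifting the orbit through $\pi$ and through iterates of $f$ — are all genuinely proper, which is exactly what genericity of $O_f(x)$ supplies.
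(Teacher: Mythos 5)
There is a genuine gap: your argument never performs the step that actually makes the theorem work, namely pulling $Y$ back by a \emph{large iterate of $f$} before applying Vojta's conjecture. You apply Vojta to a fixed log resolution of $(X,Y)$ (or of a divisor containing $Y$) and then evaluate at the lifted orbit points $\tilde x_n$. This yields at best
$\sum_{v\in S}\lambda_{Y,v}(f^{n}(x)) \leq \epsilon h_{H}(f^{n}(x)) - h_{K_{X}}(f^{n}(x)) + O(1)$,
and the term $-h_{K_{X}}(f^{n}(x))$ is in general a fixed \emph{positive} multiple of $h_{H}(f^{n}(x))$ (on $\P^{N}$ it is $(N+1)h_{H}(f^{n}(x))$), so the bound is nowhere near $o(h_{H}(f^{n}(x)))$. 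Your proposed fix --- iterating the ramification formula $K_{X}=f^{*}K_{X}-R_{f}$ to show $h_{K_{\tilde X}}(\tilde x_{n})$ is governed by $\max_{P}e_{f,+}(P)$ --- conflates the resolution $\pi$ with the dynamics of $f$ and cannot work: for a fixed resolution, $h_{K_{\tilde X}}(\tilde x_{n})$ grows like a constant times $\alpha_{f}(x)^{n}$ regardless of assumption (3). The paper's mechanism is different: using functoriality $\lambda_{Y,v}(f^{n}(x)) = \lambda_{(f^{t})^{-1}(Y),v}(f^{n-t}(x)) + O(1)$, one applies Vojta (in the log-canonical-threshold form of \cref{prop:vojta}) to the pair $(X,(f^{t})^{-1}(Y))$ for a large fixed $t$. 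The multiplicity bound \cref{lem:multbd} and \cref{lem:lctvsmult} give $\lct(X,(f^{t})^{-1}(Y)) \gtrsim (e+\epsilon)^{-t}$ where $e=\max\{e_{f,-}(y)\mid y\in Y\}$, and assumption (3) together with \cref{thm:gignac} gives $e<\alpha_{f}(x)$. One then wins because the resulting bound is $(e+\epsilon)^{t}\bigl(\epsilon' h_{H}(f^{n-t}(x))-h_{K_{X}}(f^{n-t}(x))\bigr)+C(t)$ while $h_{H}(f^{n-t}(x))\asymp \alpha_{f}(x)^{-t}h_{H}(f^{n}(x))$ by \cref{prop:growthht}, so the ratio is controlled by $((e+\epsilon)/\alpha_{f}(x))^{t}$, which is small for $t$ large. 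Nothing in your write-up produces this decaying factor.

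A secondary but real problem is your opening reduction to the divisor case. Assumption (3) constrains which periodic subvarieties meet $Y$; an effective divisor $D$ with $\Supp D\supset \Supp Y$ is much larger than $Y$ and will in general be forced to meet periodic subvarieties $P$ with $e_{f,+}(P)\geq \alpha_{f}(x)$ (on $\P^{N}$, every divisor meets every positive-dimensional subvariety), so ``such a $D$ exists'' is false in general. The paper avoids this entirely by working with the closed subscheme $Y$ itself and only passing to divisors on a log resolution of $(X,(f^{t})^{-1}(Y))$, where the scheme-theoretic inverse image becomes Cartier. Finally, the advertised use of \cref{thm:locdistvsht} and a filtration on $Y$ never materializes and is not needed.
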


\begin{rmk}\

\begin{enumerate}
\item
If $P \subset X$ is an $f$-periodic subvariety with $e_{f,+}(P)>1$,
then $f^{i}(P) \subset R_{f}$ for some $i\geq 0$, where $R_{f}$ is the ramification divisor of $f$.
\item
If the Dynamical Mordell-Lang conjecture is true for $f$,
then $O_{f}(x)$ is generic if $O_{f}(x)$ is Zariski dense.
\item
The assumption (3) is equivalent to 
\[
\max\{e_{f,-}(y) \mid \text{$y\in Y$ closed point}\} < \alpha_{f}(x).
\]
See \cref{sec:mult} for the definition of $e_{f,-}$.
Since $e_{f,-}$ is upper semicontinuous (by \cref{thm:gignac}(4)) and is equal to $1$ at the generic point, 
there is an open dense subset $U \subset X$ such that $Y$ satisfies the assumption (3) if and only if
$Y \subset U$.
\end{enumerate}
\end{rmk}

Without the genericness of $O_{f}(x)$, we can still prove the following.

\begin{thm}\label{thm:divlchtnongen}
Let $X$ be a nice variety over $K$.
Let $f \colon X \longrightarrow X$ be a finite surjective morphism and $S \subset M_{K}$ a finite set.
Let $h_{H}$ be an ample height function on $X$.
Let $Y \subset X$ be a proper closed subscheme and $x \in X(K)$ a point.
Suppose:
\begin{enumerate}
\item
$ \alpha_{f}(x)>1$;
\item
For every $f$-periodic subvariety $P$ such that $Y \cap P \neq  \emptyset$, we have $e_{f,+}(P) < \alpha_{f}(x)$.
\end{enumerate}

Let
\[
d= \liminf_{n\to \infty} \frac{h_{Y}(f^{n}(x))}{h_{H}(f^{n}(x))}.
\]
Assume Vojta's conjecture (for blow ups of $X$).
Then for any $ \epsilon>0$, the set
\begin{align*}
\left\{ z \in O_{f}(x) \middle| \frac{\sum_{v\notin S} \lambda_{Y,v}(z)}{h_{H}(z)} \leq d - \epsilon \right\}
\end{align*}
is not Zariski dense in $X$.
\end{thm}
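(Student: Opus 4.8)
The plan is to deduce Theorem~\ref{thm:divlchtnongen} from Theorem~\ref{thm:divlcht} by means of a stratification of $X$ into locally closed subsets on which the orbit behaves generically. Concretely, I would run the following bootstrap. Let $Z$ be the Zariski closure of $O_{f}(x)$ in $X$. If $Z = X$, then the orbit is generic precisely when it is dense, and one is essentially back in the situation of Theorem~\ref{thm:divlcht} (though note that genericness is genuinely stronger than density, so some care is needed; the honest route is to induct on $\dim Z$). So suppose inductively that the statement is known for all proper closed $f$-invariant subvarieties. Replacing $X$ by a suitable $f$-periodic component of $Z$ and passing to an iterate, we may assume $Z$ is $f$-invariant and irreducible; if $Z \subsetneq X$ we apply the inductive hypothesis to $f|_{Z}\colon Z\to Z$, after checking that the hypotheses transfer: condition~(1) is unchanged since $\alpha_{f|_{Z}}(x)=\alpha_{f}(x)$, and condition~(2) transfers because any $f|_{Z}$-periodic subvariety $P\subset Z$ meeting $Y\cap Z$ with $e_{f|_{Z},+}(P)\geq\alpha_{f}(x)$ is also such a subvariety for $f$ on $X$ meeting $Y$ (here one must be slightly careful that the multiplicity $e_{f,+}$ computed on $X$ dominates the one computed on $Z$, or argue directly with $e_{f,-}(y)$ for $y\in Y\cap Z$). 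One also needs an ample height $h_{H|_{Z}}$ on $Z$, for which $H|_{Z}$ works, and local heights $\lambda_{Y\cap Z,v}$ restrict correctly up to bounded functions. Thus the inductive step reduces everything to the case $Z = X$, i.e.\ $O_{f}(x)$ Zariski dense.

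In the Zariski-dense case, the strategy is to separate the orbit into its ``generic part'' and its ``non-generic part''. Let $W\subset X$ be the union of the (necessarily finitely many, by Noetherianity applied to the decreasing family, or by a genericity-failure argument) positive-dimensional proper closed subsets each of which meets $O_{f}(x)$ in an infinite set; equivalently $W$ is the obstruction to genericness. The subtle point is that $W$ need not be $f$-invariant, but $f^{-1}$ of such a subset is again such a subset, so after taking the closure of the union of all forward and backward images one gets an $f$-invariant closed set $W$ which is proper in $X$ (it is proper because $O_{f}(x)$ is dense, so no proper closed set contains all of it — but one must argue $W\neq X$, which follows since each offending subvariety has dimension $<\dim X$ and there are only finitely many of them modulo the $f$-action, a consequence of the Dynamical Mordell--Lang-type bounds or directly of the fact that $\alpha_{f}(x)>1$ forces the orbit to escape any fixed subvariety at exponential rate). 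On $X\setminus W$ the tail of the orbit is generic, so Theorem~\ref{thm:divlcht} applies there and yields $\lim_{n}\big(\sum_{v\in S}\lambda_{Y,v}(f^{n}(x))\big)/h_{H}(f^{n}(x)) = 0$ along the subsequence of $n$ with $f^{n}(x)\notin W$; combined with the elementary decomposition $h_{Y} = \sum_{v\in S}\lambda_{Y,v} + \sum_{v\notin S}\lambda_{Y,v} + O(1)$ this gives, for those $n$,
\[
\frac{\sum_{v\notin S}\lambda_{Y,v}(f^{n}(x))}{h_{H}(f^{n}(x))} = \frac{h_{Y}(f^{n}(x))}{h_{H}(f^{n}(x))} + o(1) \geq d + o(1),
\]
using $\liminf_{n} h_{Y}(f^{n}(x))/h_{H}(f^{n}(x)) = d$. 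Hence for any $\epsilon>0$, only finitely many $n$ with $f^{n}(x)\notin W$ can lie in the bad set $\{z\in O_{f}(x)\mid \sum_{v\notin S}\lambda_{Y,v}(z)/h_{H}(z)\leq d-\epsilon\}$. Therefore the bad set is contained, up to finitely many points, in $W\cap O_{f}(x)$, and in particular in $W$, which is a proper Zariski closed subset of $X$; adding back the finitely many stray points keeps it Zariski non-dense. This is the desired conclusion.

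The main obstacle I anticipate is making the passage ``non-generic orbit $\Rightarrow$ controlled by a proper invariant $W$'' rigorous and uniform. There are two genuine difficulties: first, one must show that genericness fails only along finitely many subvarieties up to the dynamics — this should follow from $\alpha_{f}(x)>1$ via Proposition~\ref{prop:growthht} (heights grow like $n^{l}\alpha_{f}(x)^{n}$) together with a height comparison showing that an infinite orbit inside a fixed subvariety would force the arithmetic degree along that subvariety to match, contradicting how fast the orbit escapes — but the bookkeeping with the finitely many strata and their $f$-images is delicate and is the part most likely to need a careful separate lemma. Second, the inductive transfer of hypothesis~(2) to $f|_{Z}$ requires that the multiplicity invariant $e_{f,+}$ does not decrease upon restriction to an invariant subvariety through a point of $Y$; this is plausible from the definition via lengths of local rings but should be stated and proved as a standalone fact (or circumvented by phrasing hypothesis~(2) throughout in terms of $e_{f,-}(y)$ for closed points $y\in Y$, as the remark after Theorem~\ref{thm:divlcht} suggests, and invoking upper semicontinuity from Theorem~\ref{thm:gignac}(4)). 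Everything else — the decomposition of $h_{Y}$ into local heights up to $O(1)$, the liminf manipulation, and the final ``finitely many exceptions stay non-dense'' step — is routine.
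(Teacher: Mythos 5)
Your proposal has a genuine gap and is structurally backwards compared to the paper's argument. The central difficulty is the construction of the proper $f$-invariant closed set $W$ off which the orbit is supposed to be generic. There is no reason for only finitely many proper closed subvarieties to meet $O_f(x)$ infinitely often, and no reason for their union to be contained in a proper closed set; the claim that ``there are only finitely many of them modulo the $f$-action'' is not a consequence of $\alpha_f(x)>1$ (which only controls how fast the orbit escapes a \emph{fixed} subvariety, not how many distinct subvarieties can recur). In fact, ``$O_f(x)$ dense $\Rightarrow$ $O_f(x)$ generic outside a proper invariant closed set'' is essentially a form of the Dynamical Mordell--Lang conjecture, which the theorem explicitly does not assume — the remark after \cref{thm:divlcht} flags this distinction precisely. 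Your own text acknowledges the gap (``the part most likely to need a careful separate lemma''), but it is not a bookkeeping issue; it is the whole content of the missing step, and it cannot be filled by the tools available. The preliminary induction on $\dim\overline{O_f(x)}$ is also unnecessary: if $O_f(x)$ is not Zariski dense in $X$, the conclusion is vacuous since any subset of $O_f(x)$ lies in a proper closed set.

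The paper proves \cref{thm:divlchtnongen} directly and does not reduce to \cref{thm:divlcht} at all; if anything the implication runs the other way. The key input (via \cref{lem:lctofpull} and \cref{prop:vojta}, assembled into \cref{lem:keycalc}) is: for suitable $t$, there is a \emph{single} proper closed subset $Z$ and a constant $C(t)$ such that
\[
\sum_{v\in S}\lambda_{Y,v}(f^n(x)) \le (e+\epsilon)^t\bigl(\epsilon' h_H(f^{n-t}(x)) - h_{K_X}(f^{n-t}(x))\bigr) + C(t)
\]
whenever $f^{n-t}(x)\notin Z$. Combining with $h_Y = \sum_{v\in S}\lambda_{Y,v} + \sum_{v\notin S}\lambda_{Y,v}$ and the growth estimate $h_H(f^n(x))\asymp n^l\alpha_f(x)^n$ (\cref{prop:growthht}) gives
\[
\frac{\sum_{v\notin S}\lambda_{Y,v}(f^n(x))}{h_H(f^n(x))} \ge \frac{h_Y(f^n(x))}{h_H(f^n(x))} - O(\epsilon_0) - \frac{C(t)}{h_H(f^n(x))}
\]
for all $n$ with $f^n(x)\notin Z' := f^t(Z)\cup Y$. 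Since $Z'$ is a fixed proper closed subset and $\liminf h_Y/h_H = d$, the bad set is contained in $Z'$ together with finitely many orbit points, hence is not Zariski dense. The point is that the genericity hypothesis in \cref{thm:divlcht} is used only to guarantee the orbit eventually escapes the exceptional set $Z$; in \cref{thm:divlchtnongen} one simply does not insist on escaping $Z$, and the exceptional set itself becomes (most of) the non-dense locus in the conclusion. Your decomposition $h_Y = \sum_{v\in S}\lambda_{Y,v} + \sum_{v\notin S}\lambda_{Y,v} + O(1)$ and the $\liminf$ manipulation are fine — the problem is entirely in trying to reach the generic regime first.
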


When $X=\P^{N}$ and $Y$ is a divisor $D$, this can be stated in the following way.

\begin{thm}\label{thm:PNcase}
Let $f \colon \P^{N}_{K} \longrightarrow \P^{N}_{K}$ be a finite surjective morphism with first dynamical degree $d_{1}(f)\geq 2$.
Let $S \subset M_{K}$ be a finite set.
Let $D$ be an effective Cartier divisor on $\P^{N}$ and $x \in \P^{N}(K)$ a point.
Suppose for every $f$-periodic subvariety $P \subset \P^{N}$ such that $D \cap P \neq  \emptyset$, we have $e_{f,+}(P) < d_{1}(f)$.

Assume Vojta's conjecture {\rm (}for blow ups of $\P^{N}${\rm )}.
Then for any $ \epsilon>0$, the set
\begin{align*}
\left\{ z \in O_{f}(x) \middle| \frac{\sum_{v\notin S} \lambda_{D,v}(z)}{h_{\P^{N}}(z)} \leq \deg D - \epsilon \right\}
\end{align*}
is not Zariski dense.
Here $h_{\P^{N}}$ is the naive height function on $\P^{N}$ and $\deg D$ is the degree of $D$ with respect to $\O_{\P^{N}}(1)$.
 
\end{thm}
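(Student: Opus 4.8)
The plan is to obtain \cref{thm:PNcase} as a direct specialization of \cref{thm:divlchtnongen}, applied with $X = \P^{N}_{K}$, with $H$ a hyperplane (so that $h_{H}$ is the naive height $h_{\P^{N}}$), and with $Y = D$ viewed as the effective closed subscheme cut out by $D$; recall that for an effective Cartier divisor the local height $\lambda_{D,v}$ in the divisorial sense and in the closed-subscheme sense agree up to a bounded function, so the two interpretations of the left-hand side match. We may assume $D \neq 0$, since otherwise the set in the statement is empty. If the orbit $O_{f}(x)$ is finite, then it is a finite set of points of $\P^{N}$ with $N \geq 1$, hence every subset of it is not Zariski dense and there is nothing to prove; so from now on assume $O_{f}(x)$ is infinite.

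Under this assumption the arithmetic degree of $f$ at $x$ on $\P^{N}$ equals the first dynamical degree, $\alpha_{f}(x) = d_{1}(f) \geq 2 > 1$ (as recalled in the Example above), which verifies hypothesis (1) of \cref{thm:divlchtnongen}; and since $\alpha_{f}(x) = d_{1}(f)$, the hypothesis of \cref{thm:PNcase} that no $f$-periodic subvariety $P$ meets $D$ with $e_{f,+}(P) \geq d_{1}(f)$ is literally hypothesis (2) of \cref{thm:divlchtnongen}. It then remains to identify the constant $d$ occurring in \cref{thm:divlchtnongen}: because $\Pic(\P^{N}) = \Z \cdot \O_{\P^{N}}(1)$ we have $\O_{\P^{N}}(D) \cong \O_{\P^{N}}(\deg D)$, so $h_{D} = (\deg D)\, h_{\P^{N}} + O(1)$, and since $\alpha_{f}(x) > 1$, \cref{prop:growthht} forces $h_{\P^{N}}(f^{n}(x)) \to \infty$; therefore
\[
d = \liminf_{n\to\infty} \frac{h_{D}(f^{n}(x))}{h_{\P^{N}}(f^{n}(x))} = \deg D.
\]

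Feeding $d = \deg D$, $Y = D$, and $h_{H} = h_{\P^{N}}$ into the conclusion of \cref{thm:divlchtnongen} yields precisely the claimed non-density, for every $\epsilon > 0$, of $\{ z \in O_{f}(x) \mid \sum_{v\notin S}\lambda_{D,v}(z)/h_{\P^{N}}(z) \leq \deg D - \epsilon \}$. Since all the substantive work—in particular the use of Vojta's conjecture for blow-ups—is already carried out in \cref{thm:divlchtnongen}, I do not expect a genuine obstacle at this stage; the only points requiring care are the elementary identities $\alpha_{f}(x) = d_{1}(f)$ and $d = \deg D$ on $\P^{N}$, together with the bookkeeping that the naive height $h_{\P^{N}}$, the divisorial height $h_{D}$, and the local heights $\lambda_{D,v}$ are normalized compatibly with the hypotheses of \cref{thm:divlchtnongen}.
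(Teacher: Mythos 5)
Your proof is correct and follows essentially the same route as the paper's: reduce to \cref{thm:divlchtnongen} with $Y=D$ and $h_{H}=h_{\P^{N}}$, use $\alpha_{f}(x)=d_{1}(f)$ on $\P^{N}$ for infinite orbits to match hypothesis (1), and compute $d=\deg D$ from $D\sim(\deg D)H$. You spell out a few bookkeeping points the paper leaves implicit (the $D=0$ and finite-orbit degenerate cases, and that $h_{\P^{N}}(f^{n}(x))\to\infty$ justifies passing to the limit), but the substance is identical.
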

\begin{proof}
We may assume $O_{f}(x)$ is infinite.
Then $ \alpha_{f}(x)= d_{1}(f)$.
Thus $f, x, D$ satisfy the assumptions in \cref{thm:divlchtnongen}.
Let $H$ be a hyperplane in $\P^{N}$.
Since $D \sim (\deg D) H$, 
\[
\lim_{n\to \infty} h_{D}(f^{n}(x))/h_{H}(f^{n}(x)) = \deg D
\]
and we are done by \cref{thm:divlchtnongen}.
\end{proof}

\begin{rmk}
Yasufuku proved very similar results to \cref{thm:PNcase}.
For example see \cite[Theorem 3]{yas1}, \cite[Theorem 7]{yas2}.
Yasufuku assumes Vojta's conjecture for certain simple normal crossing (SNC for short) divisors on $\P^{N}$, while we assume Vojta's conjecture for
blow ups of $\P^{N}$ in \cref{thm:PNcase} (for certain SNC divisors).
Also, we assume a condition on $D$ concerning its geometric position with respect to the periodic ramified loci of $f$
and as a consequence, we get a simpler bound ``$\deg D - \epsilon$" .

If we ignore the difference of the use of Vojta's conjecture, \cref{thm:PNcase} contains Yasufuku's theorems when the divisor $D$
satisfies the assumption of \cref{thm:PNcase}.
When $D$ dose not satisfy the assumption of \cref{thm:PNcase},
it seems that the pull-backs of $D$ by the self-morphism $f$ tend not to contain normal crossing divisors 
and therefore Yasufuku's theorems become weaker (their conclusion could be empty).
We are not sure, however, how to make this precise. It seems it is not easy to deduce some information about normal crossing part
of pull-backs of $D$ from the asymptotic invariants.
\end{rmk}

\begin{rmk}
\cref{thm:PNcase} implies in particular that any sets of $(D,S)$-integral points in the orbit $O_{f}(x)$ are not Zariski dense.
Here a set of $(D,S)$-integral points means a subset of $(\P^{N}\setminus D)(K)$ on which $\sum_{v\notin S} \lambda_{D,v}$
is bounded.
See \cite{ly19} for a result when $N=2$ (assuming the Lang-Vojta conjecture). %他にも文献あれば追加．
\end{rmk}

\begin{rmk}
According to the terminology in \cite{hssil}, the conclusion of \cref{thm:PNcase} can be rephrased that
the set of quasi-integral points in the orbit $O_{f}(x)$ is not Zariski dense.
\end{rmk}

As an immediate corollary of \cref{thm:divlcht,thm:PNcase}, we get the following.

\begin{cor}[Dynamical Lang-Siegel for $\P^{N}_{\Q}$]\label{cor:dls}
Let $\P^{N}_{\Q}$ be the projective space over $\Q$ with coordinate $x_{0}, \dots, x_{N}$ with $N\geq 1$.
Let $f \colon \P^{N}_{\Q} \longrightarrow \P^{N}_{\Q}$ be a surjective morphism with first dynamical degree $d_{1}(f)\geq 2$.
Suppose for every $f$-periodic subvariety $P$ such that $(x_{0}=0) \cap P \neq  \emptyset$, we have $e_{f,+}(P) < d_{1}(f)$.

Assume Vojta's conjecture {\rm (}for blow ups of $\P^{N}${\rm )}.
Let $x \in \P^{N}(\Q)$ be a point and write
\[
f^{n}(x) = (a_{0}(n):\cdots : a_{N}(n))
\]
with $a_{0}(n),\dots, a_{N}(n) \in \Z$ and $\gcd(a_{0}(n),\dots, a_{N}(n))=1 $.
If $x$ has infinite $f$-orbit, then for any $ \epsilon>0$, the set 
\[
\left\{  f^{n}(x) \middle| n\geq 0, \frac{\log|a_{0}(n)|}{\log \max\{|a_{0}(n)|,\dots, |a_{N}(n)|\}} < 1- \epsilon\right\}
\]
is not Zariski dense in $\P^{N}$.
If $O_{f}(x)$ is generic, we have
\[
\lim_{n\to \infty} \frac{\log|a_{0}(n)|}{\log \max\{|a_{0}(n)|,\dots, |a_{N}(n)|\}} =1.
\]

\end{cor}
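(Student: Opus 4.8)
The plan is to deduce the corollary from \cref{thm:PNcase} and \cref{thm:divlcht}, applied to the coordinate hyperplane $D = (x_{0}=0) \subset \P^{N}_{\Q}$ (so $\deg D = 1$) and the set $S = \{\infty\}$ consisting of the unique archimedean place of $\Q$. The only real content is a translation between the analytic quantities in the statement and height functions, so I would begin by fixing a convenient system of local heights for $D$ and recording the dictionary.

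For $P = (y_{0}:\cdots:y_{N}) \in (\P^{N}\setminus D)(\Q)$ written with coprime integers $y_{0},\dots,y_{N}$, I would take $\lambda_{D,\infty}(P) = \log\bigl(\max_{j}|y_{j}|/|y_{0}|\bigr)$ and $\lambda_{D,p}(P) = -\log|y_{0}|_{p}$ for finite primes $p$; any other system of local heights for $D$ differs from this one by bounded functions, hence changes none of the limits or non-density assertions below. With this choice, coprimality gives $h_{\P^{N}}(P) = \log\max_{j}|y_{j}|$, the product formula applied to the integer $y_{0}$ gives $\sum_{p}\lambda_{D,p}(P) = \log|y_{0}|$, and summing over all places returns $\sum_{v}\lambda_{D,v}(P) = h_{\P^{N}}(P)$ (consistent with $\deg D = 1$). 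Evaluated along the orbit, this yields, whenever $a_{0}(n)\neq 0$,
\[
\sum_{v\notin S}\lambda_{D,v}(f^{n}(x)) = \log|a_{0}(n)|, \qquad \lambda_{D,\infty}(f^{n}(x)) = \log\max_{j}|a_{j}(n)| - \log|a_{0}(n)|.
\]

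Next I would check the hypotheses. Since $O_{f}(x)$ is infinite and $d_{1}(f)\geq 2$, we have $\alpha_{f}(x) = d_{1}(f) > 1$ and $h_{\P^{N}}(f^{n}(x))\to\infty$, and the assumption on periodic subvarieties meeting $(x_{0}=0)$ is precisely condition (3) of \cref{thm:PNcase} (resp.\ \cref{thm:divlcht}) with $\alpha_{f}(x)=d_{1}(f)$. For the first assertion I apply \cref{thm:PNcase} with this $D$ and $S$: it shows that the set of $f^{n}(x)\notin D$ for which $\log|a_{0}(n)|/\log\max_{j}|a_{j}(n)| \leq 1-\epsilon$ is not Zariski dense. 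The orbit points that lie on $D$ are contained in the proper closed subvariety $D\subsetneq\P^{N}$, hence also form a non-dense set; since $\P^{N}$ is irreducible, the union of these two sets is not Zariski dense, and it contains the set in the statement (a point of the orbit on $D$ being read off as giving the value $-\infty$). For the second assertion, genericness of $O_{f}(x)$ forces $a_{0}(n)=0$ for only finitely many $n$; discarding these, I apply \cref{thm:divlcht} with $Y=D$ and $S=\{\infty\}$ to obtain $\lambda_{D,\infty}(f^{n}(x))/h_{\P^{N}}(f^{n}(x)) \to 0$, which by the second displayed identity says exactly that $\log|a_{0}(n)|/\log\max_{j}|a_{j}(n)| \to 1$.

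I expect no genuine obstacle here beyond bookkeeping, since all the depth sits in \cref{thm:PNcase} and \cref{thm:divlcht}; the only point calling for a little care is the treatment of the orbit points landing on $D$, where the displayed ratio is not literally defined. This is handled, as above, by observing that such points lie in a hyperplane — so they cannot affect Zariski density — and that, once $O_{f}(x)$ is assumed generic, there are only finitely many of them.
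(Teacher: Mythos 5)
Your proposal is correct and follows essentially the same route as the paper: the same choice of local heights for $D=(x_{0}=0)$, the same identities $\sum_{v\neq\infty}\lambda_{D,v}(f^{n}(x))=\log|a_{0}(n)|$ and $\lambda_{D,\infty}(f^{n}(x))=\log\max_{j}|a_{j}(n)|-\log|a_{0}(n)|$, and the same appeals to \cref{thm:PNcase} for the non-density statement and to \cref{thm:divlcht} for the limit. Your extra remarks on orbit points landing on $D$ are harmless bookkeeping that the paper leaves implicit.
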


\begin{proof}
Let $D$ be the divisor on $\P^{N}$ defined by $x_{0}=0$.
Then we can take a local height function  associated with $D$ as
\[
\lambda_{D, p}(a_{0}: \cdots : a_{N}) = \log \left( \frac{\max\{|a_{0}|_{p}, \dots, |a_{N}|_{p}\}}{|a_{0}|_{p}}\right) \qquad \text{$p=\infty$ or prime}
\]
for $(a_{0}:\cdots : a_{N})\in \P^{N}(\Q)$ with $a_{0}\neq 0$.
Let us write simply $|\ |_{\infty} = |\ |$.

Then if $a_{0}(n) \neq 0$, we have
\begin{align*}
&\frac{\sum_{p\neq \infty} \lambda_{D,p}(f^{n}(x))}{h_{\P^{N}}(f^{n}(x))}\\[3mm]
&= \frac{\log \max\{|a_{0}(n)|,\dots, |a_{N}(n)|\} - \log \left(\frac{\max\{|a_{0}(n)|, \dots, |a_{N}(n)|\}}{|a_{0}(n)|} \right)}{\log \max\{|a_{0}(n)|,\dots, |a_{N}(n)|\}}\\[3mm]
&= \frac{\log|a_{0}(n)|}{\log \max\{|a_{0}(n)|,\dots, |a_{N}(n)|\}}.
\end{align*}
Thus the first statement follows from \cref{thm:PNcase}.
For the second statement, suppose $O_{f}(x)$ is generic.
Then by \cref{thm:divlcht}, we have
\begin{align*}
0 = & \lim_{n\to \infty}\frac{ \lambda_{D, \infty}(f^{n}(x))}{h_{\P^{N}}(f^{n}(x))} =
\lim_{n\to \infty} \frac{\log \left( \frac{\max\{|a_{0}(n)|, \dots, |a_{N}(n)|\}}{|a_{0}(n)|}\right) }{ \log \max\{|a_{0}(n)|,\dots, |a_{N}(n)|\} }\\
& = \lim_{n\to \infty} \left( 1- \frac{\log|a_{0}(n)|}{\log \max\{|a_{0}(n)|,\dots, |a_{N}(n)|\}} \right)
\end{align*}
and we are done.
\end{proof}

\begin{rmk}
\cref{cor:dls} says that Vojta's conjecture implies the so called Dynamical Lang-Siegel conjecture for $\P^{N}_{\Q}$ ( \cite[Conjecture 21.4]{adsurv}).
Note that the assumption in \cite[Conjecture 21.4]{adsurv} is not sufficient and there is a counter example to that form,
cf. \cref{ex:cetodls}.
\end{rmk}

\noindent
{\bf Organization of the paper}
In \cref{sec:lochtdef,sec:roth}, we fix notation related to absolute values and local height functions, 
and recall a version of Roth's theorem.
In \cref{sec:mult}, we review a theorem on asymptotic behavior of multiplicities of self-morphisms, which is due to Favre and Gignac.
In \cref{sec:ado}, we prove \cref{thm:locdistvsht}.
In \cref{sec:sco}, we prove an (unconditional) theorem on the size of coordinates of orbits of self-morphisms on projective space.
We also give a counter example to \cite[Conjecture 21.4]{adsurv}.
In \cref{sec:lochtdiv}, we prove \cref{thm:divlcht,thm:divlchtnongen}.
We give a lower bound for the sequence of log canonical thresholds $\lct(X, (f^{n})^{-1}(Y))$, which enable us to apply Vojta's conjecture
effectively.
Only in this section do we use Vojta's conjecture.

We give several (non-)examples in \cref{sec:ado,sec:sco,sec:lochtdiv} of our main theorems.

\noindent
{\bf Convention}
In this paper, we work over a number field or a field of characteristic zero.
\begin{itemize}
\item An  \emph{algebraic scheme} over a field $k$ is a separated scheme of finite type over $k$;
\item A  \emph{variety} over $k$ is an algebraic scheme over $k$ which is irreducible and reduced;
\item A  \emph{nice variety} over a field $k$ is a smooth projective geometrically irreducible scheme over $k$;
\item Let $X$ be a scheme over a field $k$ and $k \subset k'$ be a field extension. 
 \emph{The base change} $X \times_{\Spec k}\Spec k'$ is denoted by $X_{k'}$.
For an ``object" $A$ on $X$, we sometimes use the notation $A_{k'}$ to express the base change of $A$ to $k'$ 
without mentioning to the definition of the base change if the meaning is clear.
\item For a closed subscheme $Y \subset X$, the  \emph{ideal sheaf defining $Y$} is denoted by $\I_{Y}$;
\item For a self-morphism $f \colon X \longrightarrow X$ of an algebraic scheme over $k$ and a 
point $x$ of $X$ (scheme point or $k'$-valued point where $k'$ is a field containing $k$), the  \emph{$f$-orbit of $x$}
is denoted by $O_{f}(x)$, i.e. $O_{f}(x) = \{ f^{n}(x) \mid n=0,1,2, \dots\}$.
\end{itemize}

\begin{ack}
The author would like to thank Joseph Silverman for discussing this subject with him and 
giving him many suggestions and valuable comments.
He would also like to thank Kenta Hashizume, Reimi Irokawa, Takumi Murayama, Kenta Sato,
Yuya Takeuchi, Takehiko Yasuda,  and Shou Yoshikawa for answering his questions.
The author would like to thank the referee for many suggestive comments.
The author is supported by JSPS Overseas Research Fellowship.
He would also like to thank the department of mathematics at Brown University for hosting him during his 
fellowship.
\end{ack}

\section{Local heights and arithmetic distance function}\label{sec:lochtdef}

We fix notation related to local height functions and arithmetic distance functions.
See \cite{bg, Lan,hs} for the definitions and basic properties of absolute values and local/global height functions associated with divisors,
and see \cite{sil87} for local/global height associated with subschemes and arithmetic distance functions.

Let $K$ be a field with proper set of absolute values $M_{K}$. 
Let $M( \overline{K})$ be the set of absolute values on $ \overline{K}$ which extend absolute values of $M_{K}$.
For any intermediate field $K \subset L \subset \KK$ with $[L:K]<\infty$, 
let $M(L)$ be the set of absolute values on $ L$ which extend absolute values of $M_{K}$.
Note that $M(L)$ is also a proper set of absolute values.

Let $X$ be a projective variety over $K$ and $Y\subset X_{\KK}$ be a proper closed subscheme.
We can equip $Y$ with a function, which is called  \emph{the local height function associated with $Y$}:
\begin{align*}
\lambda_{Y} \colon (X\setminus Y)(\KK) \times M(\KK) \longrightarrow \R; (x,v) \mapsto \lambda_{Y,v}(x).
\end{align*}
Note that this is determined up to $M_{K}$-bounded function.

For a particular choice of the local height function, each $ \lambda_{Y,v}$ is a function on $(X\setminus Y)( \overline{K})$.
If we write  $Y=D_{1}\cap \cdots \cap D_{r}$ for some effective Cartier divisors $D_{i}$, then
\[
 \lambda_{Y,v} = \min_{1\leq i \leq r}\{ \lambda_{D_{i}, v} \} \quad \text{up to $M_{K}$-bounded function}
\]
where $ \lambda_{D_{i}, v}$ are the usual  (logarithmic) local heights associated with $D_{i}$.

When $Y$ is also defined over $K$, let us denote by $Y$ the model over $K$.
In this case, we can choose $ \lambda_{Y}$ so that the indicated map exists,
\[
\xymatrix{
(X \setminus Y)(L) \times M(\KK) \ar@{^{(}->}[r] \ar[d]_{\id \times (\ )|_{L}} & 
(X\setminus Y)(\KK) \times M(\KK) \ar[r]^(.80){ \lambda_{Y}} & \R \\
(X\setminus Y)(L) \times M(L) \ar@/_{10pt}/[rru]_{\exists} &&
}
\]
for any intermediate field $K \subset L \subset \KK$ with $[L:K]<\infty$.
The induced map $(X\setminus Y)(L)\times M(L) \longrightarrow \R$ is also denoted by $ \lambda_{Y}$:
the image of $(x,v) \in (X\setminus Y)(L)\times M(L)$ is denoted by $ \lambda_{Y,v}(x)$.
A global height function $h_{Y} \colon (X \setminus Y)(\KK) \longrightarrow \R$
associated with $Y$ is defined by this choice of $ \lambda_{Y.v}$:
\begin{align}\label{eq:glhtY}
h_{Y}(x) = \frac{1}{[L:K]}\sum_{v \in M(L)} [L_{v}:K_{v|_{K}}] \lambda_{Y, v}(x)
\end{align}
for $x \in (X \setminus Y)(L)$.

The  \emph{arithmetic distance function on $X$} is the local height function on $X \times X$ associated with the diagonal $ \Delta$:
\[
 \delta_{X, v}  := \lambda_{ \Delta, v} \qquad \text{for $v \in M(\KK)$}.
\]

In this paper, 
{\bf when $K$ is a number field, $M_{K}$ is the set of absolute values that are normalized as in \cite[p11 (1.6)]{bg}. }
Namely, if $K=\Q$, then $M_{\Q}=\{|\ |_{p} \mid \text{$p=\infty$ or a prime number} \}$ with
\begin{align*}
 &|a|_{\infty} =
 \begin{cases}
 a \quad \text{if $a\geq0$}\\
 -a \quad \text{if $a<0$}
 \end{cases}
 \\
 & |a|_{p} = p^{-n} \quad \txt{if $p$ is a prime and  $a=p^{n}\frac{k}{l}$ where\\ $k,l$ are non zero integers coprime to $p$.}
\end{align*}
For a number field $K$, $M_{K}$ consists of the following absolute values:
\begin{align*}
|a|_{v} = |N_{K_{v}/\Q_{p}}(a)|_{p}^{1/[K:\Q]} 
\end{align*}
where $v$ is a place of $K$ which restricts to $p = \infty$ or a prime number.
We use this normalization to define global height functions $h_{H}$, $h_{Y}$, etc.
Note that under this notation, (\ref{eq:glhtY}) becomes
\begin{align*}
h_{Y}(x) = \sum_{v \in M_{L}} \lambda_{Y, v}(x)
\end{align*}
where $ \lambda_{Y,v}$ is defined by using normalized absolute values in $M_{L}$.

Let $K$ is a number field.
Let $v \in M_{K}$.
For any two extensions $v', v'' \in M( \overline{K})$ of $v$, 
$ \lambda_{Y, v'}- \lambda_{Y, v''}$ is a bounded function on $(X\setminus Y)(K)$.
Let $ \C_{v}$ be the completion of the algebraic closure of $K_{v}$.
Let $\|\ \|$ be the absolute value on $\C_{v}$ which extends $v$.
Then $ \{ \|\ \| \}$ is a proper set of absolute values and we can define $ \lambda_{Y_{\C_{v}}, \|\ \|}$.
Fix an embedding $ \overline{K} \subset \C_{v}$ over $K$ and let $v'$ be the restriction of $\|\ \|$ on $ \overline{K}$ via this embedding.
Then $ \lambda_{Y_{\C_{v}}, \|\ \|}- \lambda_{Y, v'}$ is a bounded function on $(X\setminus Y)( \overline{K})$,
where $ \lambda_{Y_{\C_{v}}, \|\ \|}$ is considered as a function on $(X\setminus Y)( \overline{K})$ via the embedding $ \overline{K} \subset \C_{v}$.
We will denote $\lambda_{Y_{\C_{v}}, \|\ \|}$ by $ \lambda_{Y, \|\ \|}$ for simplicity.

These hold also for $ \delta_{X, v}$ and we will use the same abbreviation.

\section{Diophantine approximation}\label{sec:roth}

%%%%%%%%%%%
\if0
\begin{thm}
Let $X$ be a projective variety over $K$ of dimension $n$.
Let $S\subset M_{K}$ be a finite set.
For each $v\in S$, let $Y_{0,v},\dots , Y_{n,v}$ be closed subschemes of $X_{ \overline{K}}$ in general position.
Let $\|\ \|_{v}$ be an absolute value on $ \overline{K}$ which extends $|\ |_{v}$ for $v\in S$.
Fix local height functions $ \lambda_{Y_{i,v}, \|\ \|_{v}}$.
Let $H$ be an ample divisor on $X$ and fix an ample height $h_{H}$.
Let $ \epsilon_{Y_{i,v}}(H)$ be the Seshadri constant.

Then for any $ \epsilon>0$, there is a proper Zariski closed subset $Z \subset X$ such that
\[
\sum_{v\in S} \sum_{i=0}^{n} \epsilon_{Y_{i,v}}(H) \lambda_{Y_{i,v}, \|\ \|_{v}}(x) \leq (n+1+ \epsilon)h_{H}(x)
\]
for all $x\in X(K)\setminus Z$.
\end{thm}
\fi
%%%%%%%%%%%

We use the following version of Roth's theorem as a key input to the proof of \cref{thm:locdistvsht}.

\begin{thm}\label{Roth}
Let $X$ be a projective variety over a number field $K$.
Let $H$ be a very ample divisor on $X$, and fix a height function $h_{H}$.
Let $y\in X( \overline{K})$ be any point.
Fix an arithmetic distance function $ \delta_{X}$.
Then for any $v\in M(\KK)$ and for any $ \epsilon>0$, there is a finite subset $Z(y, \epsilon) \subset X(K)$ such that
\begin{align*}
\delta_{X, v}(x,y) \leq (2+ \epsilon)h_{H}(x) \quad \text{for all $x \in X(K)\setminus Z(y, \epsilon)$}.
\end{align*}
\end{thm}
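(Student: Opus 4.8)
The plan is to deduce the statement from the classical one‑place Roth theorem on $\P^{1}$ together with Northcott's theorem on finiteness of rational points of bounded height.

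First I would make harmless reductions. Replacing $H$ by a multiple we may assume it is very ample, so that $|H|$ embeds $X$ as a closed subvariety of some $\P^{M}$ with $h_{H}=h_{\P^{M}}|_{X}+O(1)$, and $\delta_{X,v}$ agrees up to $O(1)$ with the restriction of the arithmetic distance function on $\P^{M}$; since changing $\delta_{X}$, $h_{H}$ or the local heights by bounded functions alters the exceptional set only by a Northcott‑finite set, I may work with convenient models. The case $\dim X=0$ is trivial ($X(K)$ is then finite, so $Z(y,\epsilon)=X(K)$ works), so assume $\dim X\ge1$. If $y\in X(K)$, write the reduced point $\{y\}$ scheme‑theoretically as $D_{1}\cap\cdots\cap D_{r}$ with $D_{i}$ hyperplane sections of $X$ through $y$ defined over $K$, with $D_{1}$ an effective divisor (not all of $X$); then
\[
\delta_{X,v}(x,y)=\min_{i}\lambda_{D_{i},v}(x)+O(1)\le\lambda_{D_{1},v}(x)+O(1)\le h_{D_{1}}(x)+O(1)=h_{H}(x)+O(1),
\]
using that for $x\in X(K)$ and $D_{1}/K$ the local height $\lambda_{D_{1},v}$ depends only on $v|_{K}$ and is bounded above by the global height of the effective divisor $D_{1}$. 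This already dominates $(2+\epsilon)h_{H}(x)$ once $h_{H}(x)$ is large, so Northcott leaves only finitely many exceptions. From now on assume $y\notin X(K)$.

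Now the main reduction. Choose coordinates so that $\{x_{0}=0\}$ avoids $y$, and pick a general $K$‑rational linear form $\ell$ on $\P^{M}$. For general $\ell$: $\beta:=(\ell/x_{0})(y)\notin K$ (otherwise all coordinate ratios of $y$ would be $K$‑combinations of such values and $y$ would be $K$‑rational); the rational map $\psi:=(x_{0}:\ell)\colon X\dashrightarrow\P^{1}$ has indeterminacy locus $B:=X\cap\{x_{0}=\ell=0\}$ of codimension $\ge2$ in $X$ not containing $y$; and $D:=\psi^{*}\{(1:\beta)\}=\{\beta x_{0}-\ell=0\}\cap X$ is a hyperplane section of $X$ through $y$ with no component inside $B$. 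Put $z:=\psi(y)=(1:\beta)\in\P^{1}(\overline{K})$; note $z\notin\P^{1}(K)$. Since the reduced point $\{y\}$ is contained, as a closed subscheme, in $D$, monotonicity of local heights of subschemes and functoriality under the morphism $\psi|_{X\setminus B}$ give, for all $x\in(X\setminus B)(K)$,
\[
\delta_{X,v}(x,y)=\lambda_{\{y\},v}(x)+O(1)\le\lambda_{D,v}(x)+O(1)=\delta_{\P^{1},v}(\psi(x),z)+O(1).
\]
Fix $\epsilon'$ with $0<\epsilon'<\epsilon$. By the classical Roth theorem on $\P^{1}$ applied to the algebraic point $z$ and the place $v$, there is a finite $Z_{0}\subset\P^{1}(K)$ with $\delta_{\P^{1},v}(w,z)\le(2+\epsilon')h_{\P^{1}}(w)+O(1)$ for all $w\in\P^{1}(K)\setminus Z_{0}$. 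Since dropping coordinates cannot increase the height, $h_{\P^{1}}(\psi(x))\le h_{\P^{M}}(x)+O(1)=h_{H}(x)+O(1)$, so $\delta_{X,v}(x,y)\le(2+\epsilon')h_{H}(x)+O(1)$ for every $x\in(X\setminus B)(K)$ with $\psi(x)\notin Z_{0}$; such $x$ can violate $\delta_{X,v}(x,y)\le(2+\epsilon)h_{H}(x)$ only when $h_{H}(x)$ is bounded, i.e.\ for finitely many $x$ by Northcott.

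It remains to treat the two leftover loci. The locus $B$ is closed in $X$ and misses $y$, so $\lambda_{\{y\},v}$ is bounded on $B(K)$ (a local height of a subscheme is bounded on any closed subset disjoint from it), and Northcott leaves finitely many exceptions there. Likewise $\psi^{-1}(Z_{0})$ is a finite union of hyperplane sections of $X$ defined over $K$ whose closure does not contain $y$ (because $z\notin Z_{0}$), so $\lambda_{\{y\},v}$ is bounded there and Northcott again leaves finitely many exceptions. Taking $Z(y,\epsilon)$ to be the union of $\{y\}$ with all these finitely many exceptional points completes the proof. I expect the only real work to be the local‑height bookkeeping in the two displayed inequalities — the monotonicity property $Y\subseteq Y'\Rightarrow\lambda_{Y,v}\le\lambda_{Y',v}+O(1)$, functoriality of local heights under the partially defined map $\psi$, and boundedness of $\lambda_{\{y\},v}$ on closed subsets missing $y$ — all standard properties of local heights of closed subschemes (cf.\ \cite{sil87}); conceptually the point is simply that $\delta_{X,v}(x,y)$ large forces $\psi(x)$ to be a very good $v$‑adic approximation of the algebraic point $z$, so Roth applies, while everything Roth leaves over sits on a fixed closed subset avoiding $y$, where $\delta_{X,v}(\cdot,y)$ is bounded and Northcott finishes — which is exactly why the exceptional set is finite rather than merely Zariski closed.
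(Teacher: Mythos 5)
Your argument is correct in substance but takes a genuinely different route from the paper. The paper's proof is essentially a citation: it invokes McKinnon--Roth \cite[Corollary 6.4]{mr15}, whose Seshadri-constant form of Roth's theorem applies directly to any projective variety; very ampleness of $H$ forces $\epsilon_{y}(H)\ge 1$ and hence the exponent $2+\epsilon$. You instead reduce to the classical one-place Roth theorem on $\P^{1}$ by projecting linearly from $X\subset\P^{M}$ to $\P^{1}$ so that $z=\psi(y)\notin\P^{1}(K)$, and then sweep up the leftover loci ($B$, $\psi^{-1}(Z_{0})$, the case $y\in X(K)$) using boundedness of $\delta_{X,v}(\cdot,y)$ on closed sets missing $y$ together with Northcott. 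Your route is more elementary and self-contained (and is in fact close in spirit to how the results of \cite{mr15} are proved), at the cost of more bookkeeping; the citation buys brevity and a statement calibrated by Seshadri constants rather than by very ampleness.

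One displayed step needs repair, precisely at the point you flag as ``bookkeeping.'' The asserted identity $\lambda_{D,v}(x)=\delta_{\P^{1},v}(\psi(x),z)+O(1)$ on $(X\setminus B)(K)$ is not correct as written: functoriality of local heights is a statement about morphisms of projective varieties, not about the non-proper restriction $\psi|_{X\setminus B}$. Concretely, one computes
\[
\lambda_{D,v}(x)-\delta_{\P^{1},v}(\psi(x),z)=\log\Bigl(\max_{i}|x_{i}(x)|_{v}\big/\max\{|x_{0}(x)|_{v},|\ell(x)|_{v}\}\Bigr)+O(1),
\]
which is a local height for the subscheme $\{x_{0}=\ell=0\}\cap X$ and is unbounded as $x$ approaches $B$ $v$-adically; the failure is in the direction that matters, since $\lambda_{D,v}$ can greatly exceed $\delta_{\P^{1},v}\circ\psi$ near $B$. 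The inequality you actually need, $\delta_{X,v}(x,y)\le\delta_{\P^{1},v}(\psi(x),z)+O(1)$ on $(X\setminus B)(K)$, is nevertheless true and can be obtained in either of two standard ways: blow up $B$ so that $\tilde\psi=\psi\circ\pi$ becomes a morphism of projective varieties, and observe that $\pi^{-1}(y)$ lies in $\tilde\psi^{*}\{z\}$ itself (not merely in $\pi^{*}D$), so monotonicity plus honest functoriality on the blow-up give the bound with no contribution from the exceptional locus; or note that whenever $\delta_{X,v}(x,y)$ exceeds a fixed constant, $x$ is $v$-adically close to $y$ and hence $v$-adically bounded away from $B$, where the discrepancy above is $O(1)$, while the remaining $x$ are handled trivially by Northcott. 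With that repair, your proof goes through.
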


\begin{proof}
This follows from, for example, \cite[Corollary 6.4]{mr15}.
Note that in \cite{mr15}, they use different notation from this paper.
After adjusting the difference of normalizations of absolute values, we have
\begin{align*}
\delta_{X,v}(x,y) = -\log d_{v}(x,y) \quad \text{up to $M_{K}$-bounded function}
\end{align*}
where $d_{v}$ is the distance function used in \cite{mr15}.
Also, since we take $H$ to be very ample, which corresponds to $L$ in \cite[Corollary 6.4]{mr15},
the Seshadri constant in \cite[Corollary 6.4]{mr15} is $\geq 1$: $ \epsilon_{x}(L)\geq 1$.
\end{proof}

\section{Multiplicities}\label{sec:mult}
In this section, we review a theorem by Dinh, Favre, and Gignac on the multiplicities of preimages of points 
under self-morphisms.
This is the key to bound the singularities of $(f^{n})^{-1}(Y)$ 
in the proof of our main theorems.

In this section, the ground field is a field of characteristic zero.
In this section, if we write $x \in X$ for a scheme $X$, this literally means $x$ is a point of the underlying topological space of $X$.

\begin{defn}
For a finite flat morphism $f \colon X \longrightarrow Y$ between algebraic schemes and a (scheme) point $x \in X$,
we define  \emph{the multiplicity of $f$ at $x$} by
\[
e_{f}(x) = l_{ \mathcal{O}_{X,x}}(\mathcal{O}_{X,x}/f^{*} \mathfrak{m}_{f(x)}\O_{X,x}).
\]
Here $l_{ \mathcal{O}_{X,x}}$ stands for the length as an $ \mathcal{O}_{X,x}$ module.
\end{defn}

\begin{rmk}\label{rmk:multfldext}
Let $k$ be the ground field, which is characteristic zero as we always assume, and $k \subset k'$ be a field extension.
Then for any scheme point $x' \in X_{k'}$ lying over $x \in X$, we have
$e_{f_{k'}}(x')=e_{f}(x)$. 
The proof of this fact requires several steps and it is too long to include here.
When $k'$ is algebraic over $k$ (only this case we need this fact), however, it is easy to see this.
Suppose $k'$ is algebraic over $k$.
Take a finite intermediate extension $k \subset K \subset k'$.
Consider the following commutative diagram:
 \[
\xymatrix{
X_{k'} \ar[r]^{f_{k'}} \ar[d]_{ \alpha} & Y_{k'} \ar[d]^{\beta}\\
X_{K} \ar[r]^{f_{K} } \ar[d]_{p} & Y_{K} \ar[d]^{q}\\
X \ar[r]_{f} & Y.
}
\]
Set $x'' = \alpha(x')$, $y'=f_{k'}(x')$, and $y'' = \beta(y')$.
If $K$ is large enough, we have $\m_{x''} \O_{X',x'} = \m_{x'}$ and $\m_{y''} \O_{Y',y'} = \m_{y'}$.
Note also that $p$ and $q$ are finite \'etale morphisms since $k$ is characteristic zero.
Use \cref{claim:chainrule} for each square and we can show $e_{f_{k'}}(x')=e_{f}(x)$.

For a point $x \in X(k')$, let $ \xi \in X$ be the image of $x \colon \Spec k' \to X$
and $ \xi'$ be the image of $(x, \id) \colon \Spec k' \to X_{k'}$.
Then we have $e_{f}( \xi)= e_{f}( \xi')$ and define $e_{f}(x)= e_{f}( \xi)= e_{f}( \xi')$.
\end{rmk}

\begin{lem}\label{lem:chainrule}
Let $f \colon X \longrightarrow Y$ and $g \colon Y \longrightarrow Z$ be finite flat morphisms between algebraic schemes.
Let $x \in X$.
Then we have
\[
e_{g\circ f}(x)= e_{f}(x)e_{g}(f(x)).
\]
\end{lem}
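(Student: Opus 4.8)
The plan is to reduce to commutative algebra of the local rings at $x$, $f(x)$ and $g(f(x))$, and then transport a composition series through the flat base change $C \otimes_B -$.

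Set $y = f(x)$, $z = g(f(x))$ and write $A = \mathcal{O}_{Z,z}$, $B = \mathcal{O}_{Y,y}$, $C = \mathcal{O}_{X,x}$ with maximal ideals $\mathfrak{m}_z$, $\mathfrak{m}_y$, $\mathfrak{m}_x$; the morphisms induce local homomorphisms $A \to B \to C$. Since $f$ and $g$ are flat, $C$ is flat over $B$; since they are finite, the modules $B/\mathfrak{m}_z B$ and $C/\mathfrak{m}_y C$ (where $\mathfrak{m}_z B$ and $\mathfrak{m}_y C$ denote the extended ideals $g^{*}\mathfrak{m}_z\mathcal{O}_{Y,y}$ and $f^{*}\mathfrak{m}_y\mathcal{O}_{X,x}$) have finite length over $B$ and $C$ respectively, so that $e_g(y) = l_B(B/\mathfrak{m}_z B)$, $e_f(x) = l_C(C/\mathfrak{m}_y C)$ and $e_{g\circ f}(x) = l_C(C/\mathfrak{m}_z C)$ are well-defined finite numbers. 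First I would record that, because $A \to C$ factors through $B$, the extended ideal $\mathfrak{m}_z C$ equals $(\mathfrak{m}_z B) C$, whence $C/\mathfrak{m}_z C \cong C \otimes_B (B/\mathfrak{m}_z B)$ as $C$-modules.

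Next I would choose a composition series $0 = M_0 \subsetneq M_1 \subsetneq \cdots \subsetneq M_r = B/\mathfrak{m}_z B$ of the finite-length $B$-module $B/\mathfrak{m}_z B$, so that $r = e_g(y)$ and $M_i/M_{i-1} \cong B/\mathfrak{m}_y = \kappa(y)$ for every $i$. Applying the exact functor $C \otimes_B -$ (exact by flatness of $C$ over $B$) yields a filtration $0 = C\otimes_B M_0 \subseteq C\otimes_B M_1 \subseteq \cdots \subseteq C\otimes_B M_r = C/\mathfrak{m}_z C$ whose successive quotients are $C \otimes_B (M_i/M_{i-1}) \cong C \otimes_B \kappa(y) = C/\mathfrak{m}_y C$, a $C$-module of length $e_f(x)$. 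Additivity of length along this filtration then gives $e_{g\circ f}(x) = l_C(C/\mathfrak{m}_z C) = r\cdot e_f(x) = e_g(f(x))\, e_f(x)$, as claimed.

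The argument is essentially formal once the set-up is in place. The only points needing a little care are the justification that all the lengths involved are finite — this is where the \emph{finiteness} of $f$ and $g$ enters, via coherence of $f_{*}\mathcal{O}_X$ and $g_{*}\mathcal{O}_Y$ — and the remark that replacing the morphisms by the maps of local rings at $x$, $y$, $z$ loses nothing, since $e_f(x)$ depends only on the stalks. I do not anticipate a genuine obstacle.
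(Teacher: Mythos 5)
Your proof is correct and takes essentially the same approach as the paper's: both reduce to the chain of local ring homomorphisms $A = \mathcal{O}_{Z,z} \to B = \mathcal{O}_{Y,y} \to C = \mathcal{O}_{X,x}$ and exploit flatness of $B \to C$ together with additivity of length. The only organizational difference is that the paper first peels off the single quotient $C/\mathfrak{m}_y C$ of $C/\mathfrak{m}_z C$ via a short exact sequence, and then, in the step asserting $l_C\bigl((\mathfrak{m}_B/\mathfrak{m}_A B)\otimes_B C\bigr) = l_B(\mathfrak{m}_B/\mathfrak{m}_A B)\, l_C(C/\mathfrak{m}_B C)$, implicitly relies on the very composition-series-plus-flatness argument you make explicit. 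Your version, which tensors a full composition series of $B/\mathfrak{m}_z B$ against $C$ in one go and reads off the successive quotients as copies of $C/\mathfrak{m}_y C$, is slightly more streamlined, but the underlying mechanism is identical.
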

\begin{proof}
This follows from the following claim:
\begin{claim}\label{claim:chainrule}
Let $(A,\m_{A}) \longrightarrow (B,\m_{B}) \longrightarrow (C, \m_{C})$ be local homomorphisms between Noetherian local rings such that
$\m_{A}B$ is $\m_{B}$-primary and $\m_{B}C$ is $\m_{C}$ primary.
Suppose $B \longrightarrow C$ is flat.
Then
\[ 
l_{C}(C/\m_{A}C) = l_{C}(C/\m_{B}C) l_{B}(B/\m_{A}B).
\]
\end{claim}
\begin{claimproof}
Since $\m_{A}B$ is $\m_{B}$-primary and $\m_{B}C$ is $\m_{C}$-primary, 
$l_{B}(B/\m_{A}B)$ and $l_{C}(C/\m_{B}C)$ are finite.
By the exact sequence of $C$-modules
\[
0 \longrightarrow \m_{B}C/\m_{A}C \longrightarrow C/\m_{A}C \longrightarrow C/\m_{B}C \longrightarrow0 
\]
we have 
\[
l_{C}(C/\m_{A}C) = l_{C}(\m_{B}C/\m_{A}C) + l_{C}(C/\m_{B}C).
\]
Since $B \longrightarrow C $ is flat we have the following exact sequence of $C$-modules:
\begin{align}\label{eq:exactseq}
0 \longrightarrow \m_{A}B {\otimes}_{B}C \longrightarrow \m_{B} {\otimes}_{B}C \longrightarrow (\m_{B}/\m_{A}B) {\otimes}_{B}C \longrightarrow 0.
\end{align}
Thus we have
\begin{align*}
&l_{C}(\m_{B}C/\m_{A}C) = l_{C}(\m_{B} {\otimes}_{B}C/\m_{A}B {\otimes}_{B}C) & \text{since $B \to C$ is flat}\\
&= l_{C}((\m_{B}/\m_{A}B) {\otimes}_{B}C) & \text{by  (\ref{eq:exactseq})}\\
&= l_{B}(\m_{B}/\m_{A}B)l_{C}(C/\m_{B}C)    & \text{since $B \to C$ is flat}.
\end{align*}
Thus we get 
\[
l_{C}(C/\m_{A}C) = l_{C}(C/\m_{B}C) (l_{B}(\m_{B}/\m_{A}B)+1) =  l_{C}(C/\m_{B}C) l_{B}(B/\m_{A}B).
\]

\end{claimproof}

\end{proof}

\begin{lem}\label{lem:multvslen}
Let $f \colon X \longrightarrow Y$ be a finite flat morphism between algebraic schemes.
Let $x \in X$ and $y=f(x)$.
Then 
\[
\m_{y} \O_{X,x} \supset \m_{x}^{e_{f}(x)}.
\]
\end{lem}
\begin{proof}
Let $A=\O_{X,x}/\m_{y} \O_{X,x}$.
Since $f$ is finite, this is an Artin local ring. Let $\m \subset A$ be the maximal ideal.
Note that $e_{f}(x) = l_{\O_{X,x}}(A)=l_{A}(A)$ since $\O_{X,x}$-submodules of $A$ are exactly $A$-submodules of $A$.
Let $r>0$ be the minimum integer such that $\m^{r}=0$. 
Then $\m^{i}/\m^{i+1} \neq 0$ for $i=0,\dots, r-1$ by Nakayama's lemma.
Therefore, we have $r\leq l_{A}(A) = e_{f}(x)$ and get $\m^{e_{f}(x)}=0$.
This means $\m_{y} \O_{X,x} \supset \m_{x}^{e_{f}(x)}$.
\end{proof}

A function $\varphi \colon X \longrightarrow \R$ is called  \emph{upper semicontinuous} if
$\{x \in X \mid \varphi(x)<a\}$ is open for all $a\in \R$.

\begin{prop}\label{lem:usc}
Let $f \colon X \longrightarrow Y$ be a finite flat morphism between algebraic schemes.
Then the function
\[
e_{f} \colon X \longrightarrow \R, \quad x \mapsto e_{f}(x)
\]
is upper semicontinuous.
\end{prop}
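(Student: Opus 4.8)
The plan is to reduce the statement to a standard semicontinuity fact about fibers of a coherent sheaf, using flatness to control how the length of the fiber ring varies. First I would observe that the claim is local on $Y$, so I may assume $Y = \Spec B$ is affine, and then (since $f$ is finite) $X = \Spec A$ with $A$ a finite $B$-algebra which is flat, hence locally free as a $B$-module of some rank, say rank $r$ on the connected component we work on. For a point $x \in X$ with $y = f(x)$, write $\kappa(y)$ for the residue field; the multiplicity $e_f(x)$ is by definition $l_{\O_{X,x}}(\O_{X,x}/\m_y\O_{X,x})$. The key reduction is that this length can be computed on the fiber: $\O_{X,x}/\m_y\O_{X,x}$ is a localization of $A \otimes_B \kappa(y)$ at the prime corresponding to $x$, and $e_f(x)$ equals the length of that local ring of the fiber algebra $A_y := A\otimes_B\kappa(y)$, which is a finite-dimensional $\kappa(y)$-algebra. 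So $e_f(x)$ is the local multiplicity of the finite scheme $X_y = \Spec A_y$ at the point $x$.

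Next I would pass to the following setup: consider the morphism $X \to Y$ and the coherent sheaf $\O_X$ on $X$. Over the point $x$, I want to bound $e_f(x')$ for $x'$ near $x$ from above. The cleanest route is to use the structure of $X_y$: the length of $A_y$ at $x$ is at most $\dim_{\kappa(y)} A_y = r$ when the fiber over $y$ is finite of degree $r$, but I need a genuinely semicontinuous bound, not just a global one. Here is where I would invoke \cref{lem:multvslen}: it gives $\m_y\O_{X,x} \supset \m_x^{e_f(x)}$, equivalently $\m_x^{e_f(x)} \subset \m_y\O_{X,x}$. Conversely, since $\O_{X,x}/\m_y\O_{X,x}$ has length $e_f(x)$ and is local Artinian, $\m_x^{e_f(x)} \subset \m_y\O_{X,x}$ already. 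The statement $e_f(x) < a$ (for integer $a$; it suffices to treat integer thresholds since $e_f$ is integer-valued) is then equivalent to $\m_x^{a-1} \subset \m_y\O_{X,x}$, i.e.\ the image of $\m_x^{a-1}$ in the fiber $A_y$ localized at $x$ vanishes. To globalize: let $\I \subset \O_X$ be the ideal sheaf cutting out the closed fiber-diagonal appropriately — more concretely, work with the relative ideal sheaf and consider, on $X$, the coherent sheaf $\mathcal{F}_a := (\O_X / f^*\m_Y \cdot \O_X)$ and its $(a-1)$-st power of the relative maximal ideal. The function $x \mapsto \dim_{\kappa(y)}(\mathcal{F}_a)_x$ restricted to the finite fibers is upper semicontinuous on $X$ by the standard Grauert/Nakayama semicontinuity argument (the fiber dimension of a coherent sheaf jumps up on closed sets), and $e_f(x)$ is exactly this fiber length for the full fiber sheaf $\O_X/f^*\m_Y\O_X$ at $x$.

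The cleanest formulation I would actually write up: set $\mathcal{G} = \O_X / f^*(\m_Y)\O_X = \O_X/f^*\I_{\text{closed points of }Y}$... rather, let $\Delta_f \colon X \hookrightarrow X\times_Y X$ be... no — simplest is: the sheaf $f_*\O_X$ on $Y$ is locally free of finite rank, and $e_f(x) = \length_{\O_{Y,y}}\big((f_*\O_X \otimes \kappa(y))_{x}\big)$, where the subscript $x$ denotes localizing the finite $\kappa(y)$-algebra $(f_*\O_X)\otimes\kappa(y)$ at the maximal ideal corresponding to $x$. Upper semicontinuity of $x \mapsto e_f(x)$ on $X$ then follows from the fact that for the finite morphism $f$, the function on $X$ sending $x$ to the length of the local ring of the fiber $X_{f(x)}$ at $x$ is upper semicontinuous — a consequence of \cref{lem:multvslen} (which shows $e_f(x) < a \iff \m_x^{a-1} \subseteq \m_{f(x)}\O_{X,x}$) together with the fact that the locus where a section of a coherent sheaf vanishes is closed. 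Concretely: $\{x : e_f(x) \ge a\} = \{x : \m_x^{a-1} \not\subseteq \m_{f(x)}\O_{X,x}\}$, and since $f$ is finite flat, $\m_{f(x)}\O_{X,x}$ is the stalk at $x$ of the coherent sheaf $f^*\m_Y\cdot\O_X$; the condition $\m_x^{a-1}\subseteq (f^*\m_Y\O_X)_x$ is an open condition on $x$ because $\mathcal O_X/(f^*\m_Y\O_X + \m_x^{a-1})$ having the expected behavior is detected by a coherent sheaf being zero near $x$. The main obstacle — and the point deserving the most care — is making precise the transition from "$e_f(x)$ computed via the local length at $x$ of the scheme-theoretic fiber" to an honest statement about stalks of a single coherent sheaf on $X$, so that Nakayama/coherence gives openness; the flatness of $f$ is exactly what makes $f^*\m_Y\cdot\O_X$ well-behaved and makes the fiber length $e_f$ (rather than some naive intersection number) the right invariant. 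I expect the write-up to invoke \cref{lem:multvslen} as the crucial bridge and otherwise be a routine application of semicontinuity of fiber dimension for coherent sheaves.
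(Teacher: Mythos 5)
Your overall strategy --- realize $e_f(x)$ as the fiber dimension of a coherent sheaf and invoke semicontinuity of fiber dimension --- is exactly the paper's, but you never actually produce the coherent sheaf, and this is the whole content of the proof. Objects like ``$\O_X/f^*\m_Y\O_X$'' or ``$(f_*\O_X\otimes\kappa(y))_x$'' are not fibers of a single coherent sheaf on $X$: both the maximal ideal $\m_{f(x)}$ upstairs and the power $\m_x^{a-1}$ vary with the point, so no semicontinuity theorem applies to them as written. At one point you even assert that ``the function sending $x$ to the length of the local ring of the fiber $X_{f(x)}$ at $x$ is upper semicontinuous'' as a known fact --- but that \emph{is} \cref{lem:usc}, so this is circular. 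The paper's fix is the device you started to write down and then abandoned: let $\I\subset\O_{X\times_YX}$ be the ideal of the diagonal, take $n_0$ at least the maximal fiber degree, and set $\F={\pr_1}_*(\O_{X\times_YX}/\I^{n_0})$. This is an honest coherent sheaf on $X$ whose fiber at $x$ has length $e_f(x)$ (that identification is a separate local-algebra claim, proved in the paper), and then ordinary upper semicontinuity of $x\mapsto\dim_{k(x)}(\F\otimes k(x))$ finishes the argument.

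The substitute criterion you propose in place of this construction is moreover false. You claim $e_f(x)<a$ if and only if $\m_x^{a-1}\subseteq\m_{f(x)}\O_{X,x}$; only the forward implication follows from \cref{lem:multvslen}. For the map $f\colon\A^2\to\A^2$, $(x,y)\mapsto(x^2,y^2)$, at the origin one has $\m_0\O_{X,0}=(x^2,y^2)$ and $e_f(0)=\dim_k k[x,y]/(x^2,y^2)=4$, yet $\m_0^3\subseteq(x^2,y^2)$, so your criterion would give $e_f(0)\leq 3$. The point is that in dimension $\geq 2$ the length of $\O_{X,x}/\m_x^{m}$ grows much faster than $m$, so containment of a power of $\m_x$ does not control the length of the quotient. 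Any repair has to compute the actual length, which again forces you back to a sheaf-theoretic globalization such as the diagonal construction.
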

\begin{proof}
Let $\I \subset \O_{X\times_{Y}X}$ be the ideal sheaf of the diagonal.
Let 
\[
n_{0} = \max\{ \dim_{k(y)} \O_{f^{-1}(y)} \mid y \in Y\} 
\]
where $k(y)$ is the residue field of $y$.
Set $\F = {\pr_{1}}_{*}(\O_{X\times_{Y}X}/\I^{n_{0}})$ where $\pr_{1} \colon X\times_{Y}X \longrightarrow X$
is the first projection.
Then 
\begin{align}\label{eq:multcoh}
e_{f}(x) = l_{\O_{X,x}}(\F_{x} {\otimes}_{\O_{X,x}} k(x))
\end{align}
and therefore we are done.

The equation (\ref{eq:multcoh}) follows from the following claim, which is easy to prove.
\begin{claim}
Let $(A,\m) \longrightarrow (B,\n)$ be a local homomorphism between Noetherian local rings 
such that $A/\m$ has characteristic zero, $\m B$ is $\n$-primary, and $[B/\n : A/\m] <\infty$.
Let $I \subset B {\otimes}_{A}B$ be the ideal generated by $b\otimes1-1\otimes b$ for $b\in B$.
If $n \geq l_{B}(B/\m B)$,
then 
\[
l_{B}((B {\otimes}_{A} B)/(I^{n}+\n {\otimes}_{A}B)) = l_{B}(B/\m B).
\]
\end{claim}

\end{proof}

The following theorem says we have control of asymptotic averages of multiplicities
of forward and backward orbits.
These are due to Dinh \cite[Theorem 1.2, Corollary 1.3]{dinh}, Favre \cite[\S 2.5]{fav}, and Gignac \cite[A.3]{gig}.

\begin{thm}\label{thm:gignac}
Let $X$ be an algebraic scheme over a field of characteristic zero.
Let $f \colon X \longrightarrow X$ be a finite flat surjective morphism.

Then 
\begin{enumerate}
\item\label{e+}
The limit 
\begin{align*}
e_{f,+}(x):=e_{+}(x) := \lim_{n\to \infty} e_{f^{n}}(x)^{1/n}
\end{align*}
exists for all $x\in X$.
Moreover, let $L(x) \subset X$ be the $ \omega$-limit set of $x$, i.e.
\begin{align*}
L(x) = \bigcap_{k \geq 0} \overline{ \{ f^{n}(x) \mid n \geq k  \} }.
\end{align*}
Then the generic points $y_{1}, \dots, y_{r}$ of $L(x)$ form an $f$-periodic cycle and we have
\[
e_{+}(x) = (e_{f}(y_{1})\cdots e_{f}(y_{r}))^{1/r}.
\]

For a subvariety $P \subset X$ with generic point $\eta$, we write $e_{f,+}(P) = e_{f,+}(\eta)$.

\item\label{e-}
The limit
\begin{align*}
e_{f,-}(x):=e_{-}(x) := \lim_{n\to \infty} \Bigl( \sup\{e_{f^{n}}(y) \mid y\in X, f^{n}(y)=x\}   \Bigr)^{1/n}
\end{align*}
exists.

\item\label{e-itoe+}
We have
\begin{align*}
e_{-}(x) = \max\left\{ e_{+}(y) \middle|  \txt{$y\in X$ is an $f$-periodic \\scheme point such that $x \in \overline{\{y\}}$}  \right\}.
\end{align*}
Note that this statement includes the existence of $\max$.

\item\label{e-usc}
The function 
\begin{align*}
e_{-} \colon X \longrightarrow \R, \quad x \mapsto e_{-}(x)
\end{align*}
is upper semicontinuous.

\item\label{e-<=e+}
We have
$e_{-}(x) \leq e_{+}(x)$ for all $x \in X$.
If $x$ is $f$-periodic, we have $e_{-}(x)=e_{+}(x)$.

\item\label{supvslim}
Let $Y \subset X$ be a closed subscheme.
Then we have
\begin{align*}
\lim_{n\to \infty} \sup_{y \in Y} \left(\sup \{ e_{f^{n}}(x) \mid f^{n}(x) = y \}^{1/n} \right)= \max\{ e_{f,-}(y) \mid y \in Y \}.
\end{align*}

\end{enumerate}

\end{thm}

\begin{proof}
Let $\tau = \log e_{f} \colon X \longrightarrow \R$.
This is bounded and upper semicontinuous by \cref{lem:usc}.
By \cref{lem:chainrule}, we have 
\[
\tau_{n}(x) := \sum_{k=0}^{n-1}\tau(f^{k}(x)) = \sum_{k=0}^{n-1} \log e_{f}(f^{k}(x)) =\log e_{f^{n}}(x).
\]
Apply \cite[Theorem A.3.1]{gig} or \cite[Theorem E]{gig14} (see also the comments right below them) and \cite[Theorem A.3.5]{gig}
to $f \colon X \longrightarrow X$ and this $\tau$.

For the last statement, the existence of the maximum in the right hand side follows from the upper semicontinuity of $e_{f,-}$.
The equality follows from the next lemma (again apply it to $\tau = \log e_{f}$).
\end{proof}

\begin{rmk}
It is not clear if \cite[Proposition A.3.8]{gig} is true without assuming $f$ is a closed map.
In our case, $f$ is finite and therefore it is closed.
\end{rmk}

\begin{rmk}\label{rmk:twocond}
By \cref{thm:gignac}(3), for $x \in X$ and $ \alpha \in \R$,
the following two statements are equivalent.
\begin{enumerate}
\item For every $f$-periodic subvariety $P \subset X$ such that $x \in P$, we have $e_{f,+}(P) < \alpha$;
\item $e_{f,-}(x) < \alpha$.
\end{enumerate}
\end{rmk}

\begin{lem}
In this lemma, we use the notation in {\rm \cite[A.3]{gig}}.
Let $f \colon X \longrightarrow X$ be a surjective continuous closed self-map of a Zariski topological space $X$ 
(i.e. a Noetherian topological space such that every non-empty irreducible closed subset has a unique generic point).
Let $\tau \colon X \longrightarrow \R$ be a bounded upper semicontinuous function on $X$.
Set 
\begin{itemize}
\item $\tau_{n}(x) = \sum_{k=0}^{n-1} \tau \circ f^{k}(x)$;
\item $\tau_{+}(x) = \lim_{n\to \infty} \tau_{n}(x)/n$;
\item $\tau_{-n}(x) = \sup_{f^{n}(y)=x} \tau_{n}(y)$;
\item $\tau_{-}(x) = \lim_{n\to \infty} \tau_{-n}(x)/n$ 
\end{itemize}
for $x \in X$ as in {\rm \cite[A.3]{gig}}.
Let $Y \subset X$ be a closed subset.
Then we have
\begin{align*}
\lim_{n \to \infty} \sup_{x \in Y} \frac{\tau_{-n}(x)}{n} = \sup_{x \in Y} \tau_{-}(x).
\end{align*}
\end{lem}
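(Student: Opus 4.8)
The plan is to prove the two inequalities $\lim_{n\to\infty}\sup_{x\in Y}\tau_{-n}(x)/n \le \sup_{x\in Y}\tau_{-}(x)$ and $\ge$ separately, the first being the substantive one. Before that, I would record the convergence of $\sup_{x\in Y}\tau_{-n}(x)/n$: since $\tau$ is bounded, the sequence $a_n:=\sup_{x\in Y}\tau_{-n}(x)$ is \emph{superadditive} up to the closedness of $f$. Indeed, if $f^m(y)=x\in Y$ realizes (nearly) the supremum $\tau_{-m}$ over $Y$, and $f^n(z)=y$ realizes $\tau_{-n}(y)$, then $f^{m+n}(z)=x$ and $\tau_{m+n}(z)=\tau_n(z)+\tau_m(f^n(z))=\tau_n(z)+\tau_m(y)$... wait, the indexing must be handled carefully: one wants $\tau_{-(m+n)}(x)\ge \tau_{-m}(x)+\big(\text{value of }\tau_{-n}\text{ at a suitable preimage}\big)$. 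The clean statement is $\tau_{-(m+n)}(x)\ge \tau_{-n}(y)+\tau_m(\ )$ summed correctly; taking $\sup$ over $x\in Y$ and using that $Y$ is closed (so relevant points stay controllable) yields $a_{m+n}\ge a_m + \inf_{y}\tau_{-n}(y)$ — this is too weak. Instead I would use that $\tau_{-n}$ is itself upper semicontinuous (this is in \cite[A.3]{gig}) and that on the Noetherian space $X$ the $\sup$ over the closed set $Y$ is attained; then superadditivity of $n\mapsto \sup_{x\in Y}\tau_{-n}(x)$ follows directly and Fekete's lemma gives the existence of the limit.

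For the inequality $\ge$: for each fixed $x\in Y$ we have $\sup_{x'\in Y}\tau_{-n}(x')/n \ge \tau_{-n}(x)/n$, and letting $n\to\infty$ gives $\lim_n \sup_{x'\in Y}\tau_{-n}(x')/n \ge \tau_-(x)$. Taking the supremum over $x\in Y$ gives $\ge \sup_{x\in Y}\tau_-(x)$.

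For the harder inequality $\le$: fix $\epsilon>0$. For each $n$, by upper semicontinuity of $\tau_{-n}$ and Noetherianity of $X$, choose $x_n\in Y$ attaining $\sup_{x\in Y}\tau_{-n}(x)=\tau_{-n}(x_n)$. The obstacle is that $x_n$ may vary with $n$, so one cannot directly compare with $\tau_-(x)$ at a single point. The standard device — and the step I expect to be the main obstacle — is to exploit the finiteness of the set of generic points of the closed subsets of $X$ together with the structure theorem \cite[Theorem A.3.5]{gig} (equivalently \cref{thm:gignac}(3) in spirit): $\tau_-$ is computed by the periodic behaviour, and $\tau_{-n}(x)/n$ approaches $\tau_-(x)$ through a descent along specializations. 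Concretely, I would argue that for $x\in Y$, $\tau_{-n}(x)/n \le \max\{\tau_-(\eta) : \eta \text{ a generic point of an irreducible component of } \overline{\{x\}}\cap(\text{periodic locus})\} + o(1)$, with the $o(1)$ \emph{uniform in }$x$ because there are only finitely many such $\eta$ across all of $X$ (the periodic generic points relevant to $\tau_-$ form a finite set, as in \cref{thm:gignac}). Since each such $\eta$ lies in $\overline{\{x\}}$ with $x\in Y$ and $Y$ closed, we get $\eta\in Y$, hence $\tau_-(\eta)\le \sup_{y\in Y}\tau_-(y)$. Combining, $\tau_{-n}(x_n)/n \le \sup_{y\in Y}\tau_-(y) + o(1)$ uniformly, and taking $n\to\infty$ finishes the proof.

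Thus the two inequalities together give the claimed equality. The only genuinely delicate point is establishing the \emph{uniform} rate of convergence of $\tau_{-n}(x)/n$ to $\tau_-(x)$ over $x\in Y$; I would extract this from Gignac's analysis \cite[A.3]{gig} of the attracting periodic data, using that $X$ is Noetherian so that only finitely many periodic generic points can ever arise, which makes the error term uniform. Everything else (Fekete, upper semicontinuity, attainment of suprema on closed Noetherian subsets) is routine.
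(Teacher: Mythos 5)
The easy inequality $\liminf_n \sup_{x\in Y}\tau_{-n}(x)/n \geq \sup_{x\in Y}\tau_-(x)$ is handled correctly and matches the paper. The hard direction, however, has a genuine gap. Your key claim is that $\tau_{-n}(x)/n$ converges to (a periodic-data bound for) $\tau_-(x)$ with an error $o(1)$ \emph{uniform in $x\in Y$}, justified by the assertion that ``only finitely many periodic generic points can ever arise'' in a Noetherian space. That finiteness assertion is false: a Noetherian (Zariski) space can carry infinitely many $f$-periodic irreducible closed subsets (already $\P^1$ under a degree-$d$ map has infinitely many periodic closed points, each the generic point of itself), so the uniformity does not follow. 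There is also a direction error in how you invoke the structure theorem: \cref{thm:gignac}(3) computes $\tau_-(x)$ as a max of $\tau_+(\eta)$ over periodic \emph{generizations} $\eta$ of $x$ (i.e.\ $x\in\overline{\{\eta\}}$), not over points of $\overline{\{x\}}$; with the correct direction such $\eta$ need not lie in $Y$ even when $x$ does, so your step ``$\eta\in\overline{\{x\}}\subset Y$'' collapses. Finally, your Fekete/superadditivity route to the existence of the limit is, as you yourself note, broken (the intermediate preimage $y$ need not lie in $Y$), and replacing it by ``usc plus attainment of the sup'' does not repair it; existence of the limit must instead come out of proving the two one-sided bounds with $\liminf$ and $\limsup$.

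The paper's actual argument for the hard direction avoids any uniform-convergence claim. Setting $c=\limsup_n\sup_{x\in Y}\tau_{-n}(x)/n$, it introduces $Z=\{y: \tau_n(y)\geq cn\ \forall n\geq1\}$ and invokes Gignac's Proposition A.3.7: there exist $b<c$ and $N$ such that any orbit segment of length $\geq N$ avoiding $Z$ has average $\leq b$. If no $\omega$-limit set of a component of $Z$ met $Y$, all $n$-fold preimages of $Y$ (for large $n$) would avoid $Z$ along most of their forward orbit, forcing $\limsup\leq b<c$, a contradiction. Hence some periodic generic point $w$ with $\tau_+(w)\geq c$ has $\overline{\{w\}}\cap Y\ni x_0$, and then $\tau_-(x_0)\geq\tau_+(w)\geq c$ gives $c\leq\sup_{x\in Y}\tau_-(x)$. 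If you want to salvage your outline, you would need to import Proposition A.3.7 (or an equivalent uniform statement about orbits avoiding a level set), not a finiteness statement about periodic points.
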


\begin{proof}
First note that for any $x \in Y$, we have
\begin{align*}
\liminf_{n \to \infty} \sup_{x' \in Y} \frac{\tau_{-n}(x')}{n} \geq \liminf_{n \to \infty}  \frac{\tau_{-n}(x)}{n} = \tau_{-}(x).
\end{align*}
Thus we get 
\begin{align*}
\liminf_{n \to \infty} \sup_{x \in Y} \frac{\tau_{-n}(x)}{n} \geq \sup_{x \in Y} \tau_{-}(x).
\end{align*}

We prove the reverse inequality (with $\limsup$ instead of $\liminf$ on the left hand side).
We follow the argument in the proof of \cite[Theorem A.3.5]{gig}.

Set
\begin{align*}
c  = \limsup_{n \to \infty} \sup_{x \in Y} \frac{\tau_{-n}(x)}{n}
\end{align*}
and define 
\begin{align*}
Z = \{ y \in X \mid \text{$\tau_{n}(y) \geq cn$ for all $n\geq 1$ }\}.
\end{align*}
(Note that $c < \infty$ since $\tau$ is bounded.) 
Then by \cite[Proposition A.3.7]{gig}, there is a real number $b < c$ and an integer $N \geq 1$ such that
if $n \geq N$ and $x \in X$ satisfies $f^{k}(x) \notin Z$ for $k=0,\dots, n$, then $\tau_{n}(x) \leq bn$. 
Let $Z = Z_{1} \cup \cdots \cup Z_{r}$ be the irreducible decomposition and let $z_{i} \in Z_{i}$ be the generic points.
Let $L(z_{i})$ be the $\omega$-limit set of $z_{i}$ with respect to $f$.

We claim that $L(z_{i}) \cap Y \neq  \emptyset$ for some $i$.
(The following argument also proves $Z$ is non-empty.)
Let us assume $L(z_{i}) \cap Y = \emptyset$ for all $i$ and deduce contradiction.
Choose $s\geq 0$ such that $f^{s}(Z_{i}) \subset L(z_{i})$ for all $i$.
Then by our assumption, we have $f^{-n}(Y) \cap Z =  \emptyset$ for $n \geq s$.
Now for $n \geq N+s$, we have
\begin{align*}
\frac{1}{n}\sup_{x \in Y} \tau_{-n}(x) = \frac{1}{n} \sup \{ \tau_{n}(y) \mid y \in f^{-n}(Y) \}.
\end{align*}
Since $y, f(y), \dots, f^{n-s}(y) \notin Z$ for $y \in f^{-n}(Y)$, we have $\tau_{n-s}(y) \leq b(n-s)$.
As $\tau_{n}(y) = \tau_{n-s}(y) + \tau_{s}(f^{n-s}(y))$, we get
\begin{align*}
\frac{1}{n}\sup_{x \in Y} \tau_{-n}(x) \leq \frac{1}{n} \left( b(n-s) + s \|\tau\|  \right) \to b 
\end{align*}
implying $c \leq b$, a contradiction. Here $\|\tau\| = \sup_{x \in X}|\tau(x)|$. 

Now, take $i$ such that $L(z_{i})\cap Y \neq  \emptyset$.
Let $F \subset L(z_{i})$ be an irreducible component such that $F \cap Y \neq  \emptyset$.
Let $w \in F$ be the generic point.
Note that $w$ is $f$-periodic.
Then by \cite[Comments after Theorem A.3.1]{gig}, we have $\tau_{+}(z_{i}) = \tau_{+}(w)$.
By the definition of $Z$, we have $\tau_{+}(z_{i}) \geq c$.
Take a point $x_{0} \in F \cap Y$.
Then by \cite[Theorem A.3.5.~ (1)]{gig}, we have $\tau_{-}(x_{0}) \geq \tau_{+}(w)$.
Therefore we get
\begin{align*}
c = \limsup_{n \to \infty} \sup_{x \in Y} \frac{\tau_{-n}(x)}{n} \geq \tau_{-}(x_{0}) \geq \tau_{+}(w) = \tau_{+}(z_{i}) \geq c
\end{align*}
and all the inequalities are  actually equalities.
Thus
\begin{align*}
\limsup_{n \to \infty} \sup_{x \in Y} \frac{\tau_{-n}(x)}{n} = \tau_{-}(x_{0}) \leq  \sup_{x \in Y} \tau_{-}(x).
\end{align*}
This finishes the proof.
\end{proof}

\section{Arithmetic distance and orbits}\label{sec:ado}

In this section, we prove \cref{thm:locdistvsht}.

\begin{lem}\label{weakdistrel}
Let $K$ be a field of characteristic zero with proper set of absolute values $M_{K}$.
Let $X, Y$ be nice varieties over $K$ and $f \colon X \longrightarrow Y$ a finite surjective morphism.
For any $y \in Y( \overline{K})$, there exists an $M_{K}$-constant $ \gamma$ such that for all $x \in X( \overline{K})$ and $v \in M( \overline{K})$, 
we have
\[
\delta_{Y,v}(f(x),y) \leq  \sum_{y'\in f^{-1}(y)}e_{f}(y') \delta_{X,v}(x,y') + \gamma_{v}.
\]
\end{lem}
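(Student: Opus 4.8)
The plan is to reduce to the local, affine statement near the points of $f^{-1}(y)$ and compare the orders of vanishing of the two sides. First I would choose affine charts: an affine neighborhood $V = \Spec B$ of $y$ in $Y$ and, using finiteness of $f$, an affine neighborhood $U = \Spec A$ of $f^{-1}(y) = \{y_1,\dots,y_s\}$ in $X$ with $f^{-1}(V) \supseteq U$, so that $A$ is a finite $B$-algebra. Shrinking further, pick a generator: realize $\{y\}$ inside $Y$ (set-theoretically, after a finite extension) as a complete intersection of divisors, or more simply pick a function $t \in B$ vanishing at $y$ and generating $\m_y \O_{Y,y}$ up to radical — actually it is cleanest to pick finitely many functions $t_1,\dots,t_m$ generating $\m_y$, each giving an effective divisor, and note $\lambda_{\{y\},v} = \min_j \lambda_{(t_j),v}$ up to bounded function, and similarly $\delta_{X,v}(-,y_i) = \min$ of the $\lambda$'s of their pullbacks. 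The point is then purely about $\ord_{y_i}(f^* t_j)$.

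Next, the key local input: by \cref{lem:multvslen}, $\m_y \O_{X,y_i} \supseteq \m_{y_i}^{e_f(y_i)}$, so every $f^* t_j$ (which lies in $\m_y \O_{X,y_i}$) vanishes to order at least... no — the inclusion goes the wrong way for that. Rather, the relevant bound is the reverse: I want $\ord_{y_i}(f^* t_j) \leq e_f(y_i) \cdot (\text{something})$. Here is the clean way: since $f$ is finite flat at $y_i$, $\O_{X,y_i}/\m_y\O_{X,y_i}$ has length $e_f(y_i)$, hence $\m_{y_i}^{e_f(y_i)} \subseteq \m_y \O_{X,y_i}$ (this is exactly \cref{lem:multvslen}), which says that $f^*$ of the \emph{generating functions of $\m_y$} generate an ideal containing $\m_{y_i}^{e_f(y_i)}$; therefore for \emph{any} local function $g$ at $y$ vanishing at $y$, and in particular for the minimum defining $\{y\}$, one gets the height comparison with a factor $e_f(y_i)$ after passing to $\min$ over $j$. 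Concretely, translating "ideal containment" into "local height inequality" via the standard functoriality $\lambda_{(\text{pullback divisor}),v}(x) = \lambda_{(\text{divisor}),v}(f(x))$ for Cartier divisors, and the fact that for ideals $I \supseteq J$ one has $\lambda_{I,v} \leq \lambda_{J,v} + O(1)$: from $\m_{y_i}^{e_f(y_i)} \subseteq f^{-1}\m_y \cdot \O_{X,y_i}$ we get, after globalizing the ideal sheaves to closed subschemes supported at $y_i$ resp. $f^{-1}(y)$,
\[
e_f(y_i)\,\delta_{X,v}(x,y_i) \geq \lambda_{f^{-1}(\{y\}),v}(x) - \gamma_{i,v} = \delta_{Y,v}(f(x),y) - \gamma_{i,v}
\]
for $x$ in a $v$-adic neighborhood of $y_i$, with $\gamma$ an $M_K$-constant. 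Summing (or rather taking the max / using positivity of each term) over $i$ gives the claimed bound $\delta_{Y,v}(f(x),y) \leq \sum_{i} e_f(y_i)\,\delta_{X,v}(x,y_i) + \gamma_v$ on a neighborhood of $f^{-1}(y)$, and away from that neighborhood $\delta_{Y,v}(f(x),y)$ is itself $M_K$-bounded (since $f^{-1}(y)$ is a divisor-theoretic thing and $\delta_{Y,v}(f(-),y) = \lambda_{f^{-1}\{y\},v}$ is bounded off the support), so the inequality holds with a possibly larger $M_K$-constant on all of $X(\KK)$.

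The main obstacle I expect is the bookkeeping that makes this uniform in $v \in M(\KK)$ with a genuine $M_K$-constant $\gamma$ (not just a bounded function for each $v$ separately): this requires realizing all the ideal-sheaf containments by finitely many global algebraic equations over $K$ (or a fixed finite extension, then averaging/restricting), so that the error terms are comparisons of finitely many explicit local height functions, each of which differs from the "model" one by a genuine $M_K$-bounded function by the standard theory (e.g. as in \cite{sil87}). A secondary subtlety is handling the covering of $X(\KK)$ by "near $y_i$" regions versus "far from $f^{-1}(y)$": one picks, for each $y_i$, a divisor $E_i$ through $y_i$ and not through the others, uses $\delta_{X,v}(x,y_i) \geq \lambda_{E_i,v}(x) + O(1)$ when $x$ is $v$-close to $y_i$, and on the complement of all these neighborhoods $\lambda_{f^{-1}\{y\},v}(f(x))$ is bounded; assembling these with a partition argument (functoriality of $\lambda$ for $\min$ of divisors, plus the fact that $\lambda_{Y,v} = \min_i \lambda_{D_i,v}$) yields the global statement. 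None of this is deep, but it is the part that needs care.
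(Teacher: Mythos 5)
Your proposal is correct and follows essentially the same route as the paper: functoriality of local heights ($\delta_{Y,v}(f(x),y)=\lambda_{f^{-1}(y),v}(x)$ for the scheme-theoretic preimage), the containment $\m_{y'}^{e_f(y')}\subseteq \m_y\O_{X,y'}$ from \cref{lem:multvslen}, and monotonicity of $\lambda$ under ideal containment to get $\lambda_{f^{-1}(y),v}\leq\sum_{y'}e_f(y')\lambda_{y',v}+O(1)$. The paper simply invokes the formalism of local heights for closed subschemes from \cite{sil87} to absorb the chart-level and $v$-adic-neighborhood bookkeeping you carry out by hand.
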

\begin{proof}
Fix $y \in Y(\KK)$.
Up to $M_{K}$-constant, we have
\begin{align*}
\delta_{Y,v}(f(x),y) & = \lambda_{y,v}(f(x)) = \lambda_{f^{-1}(y), v}(x)  & {\small \txt{here $f^{-1}(y)$ is the scheme\\ theoretic inverse image}}\\[3mm]
& \leq \sum_{\tiny\txt{$y' \in X(\KK)$\\ $f(y')=y$}} e_{f}(y')\lambda_{y',v}(x)  & \txt{use \cref{lem:multvslen}}\\[3mm]
& = \sum_{\tiny\txt{$y' \in X(\KK)$\\ $f(y')=y$}} e_{f}(y') \delta_{X,v}(x, y').
\end{align*}

\end{proof}

\begin{rmk}
Note that by the above proof, $M_{K}$-constant $ \gamma$ a priori depends on $y$.
Actually $\gamma$ can be chose to be independent of $y$ (cf. \cite{masil}), but we do not need this fact here.
\end{rmk}

\begin{prop}\label{multbd}
Let $k$ be a field of characteristic zero.
Let $X$ be a nice variety over $k$ and $f \colon X \longrightarrow X$ a surjective morphism.
Let $ \alpha>1$.
Let $y \in X(k)$.
Suppose for every $f$-periodic subvariety $P$ such that $y \in P$, we have $e_{f,+}(P) < \alpha$.
Then we have
\begin{align*}
\lim_{n\to \infty} \frac{\max\{e_{f^{n}}(z) \mid z\in X( \overline{k}), f^{n}(z)=y\}}{ \alpha^{n}}=0.
\end{align*}
\end{prop}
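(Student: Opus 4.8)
The plan is to reduce the statement to \cref{thm:gignac}(\ref{supvslim}) applied to a well-chosen closed subscheme, after splitting off the ramification locus. The key observation is that $e_{f^n}(z)$ for $z$ with $f^n(z)=y$ can only exceed $1$ if the orbit $z, f(z), \dots, f^{n-1}(z)$ meets $R_f$; more precisely, by the chain rule \cref{lem:chainrule} we have $e_{f^n}(z)=\prod_{k=0}^{n-1}e_f(f^k(z))$, and $e_f(w)>1$ forces $w \in R_f$. So first I would set $W=f^{-1}(y)$, a finite closed subscheme of $X$, and more relevantly, analyze when the backward orbit of $y$ picks up large multiplicity. The cleanest route: observe that
\[
\max\{e_{f^n}(z)\mid f^n(z)=y\} = \sup_{y'\in f^{-(n-m)}(y)\cap\text{(stuff in }R_f)}(\cdots)
\]
is awkward directly, so instead I would use \cref{thm:gignac}(\ref{supvslim}) with $Y$ replaced by a suitable closed set and then control the finitely many periodic components that can occur.

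Concretely, here is the cleaner decomposition I would carry out. By \cref{thm:gignac}(2) the limit $e_{-}(y)=\lim_n(\sup\{e_{f^n}(z)\mid f^n(z)=y\})^{1/n}$ exists, so it suffices to show $e_{-}(y)<\alpha$; since $y\in X(k)$, by \cref{rmk:multfldext} this equals $e_{-}(\xi)$ for the corresponding scheme point $\xi$, and then \cref{thm:gignac}(3) gives
\[
e_{-}(y)=\max\{e_{+}(w)\mid w\in X\text{ is }f\text{-periodic, }y\in\overline{\{w\}}\}.
\]
So I must show: for every $f$-periodic scheme point $w$ with $y\in\overline{\{w\}}$, one has $e_{+}(w)<\alpha$. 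Let $P=\overline{\{w\}}$, an $f$-periodic subvariety (since $w$ is periodic and $f$ is a closed map, $f^r(P)=P$ for the period $r$). If $e_{+}(w)=e_{+}(P)\leq 1<\alpha$ we are done. Otherwise $e_{+}(P)>1$; then by the periodic version of the multiplicity formula (the displayed geometric-mean formula after the definition of $e_{+}$, or \cref{thm:gignac}(1)) some $e_f(f^i(w))>1$, hence $f^i(P)\subset R_f$ for some $i\geq 0$ — this is exactly the statement flagged in \cref{thm:divlcht}'s remark (1), and it follows because $e_f>1$ on a point forces that point into the ramification divisor. Replacing $P$ by the periodic subvariety $P':=f^i(P)\subset R_f$ (still periodic, with $e_{+}(P')=e_{+}(P)$ by the cyclic invariance of the geometric mean), the hypothesis on $y$ reads: $y\notin f^j(P')$ for all $j\geq 0$ whenever $e_{+}(P')\geq\alpha$. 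But $y\in\overline{\{w\}}=P\subset\bigcup_{j}f^j(P')$ up to applying more iterates — more carefully, since $f^i(P)=P'$ and $f$ restricted to the periodic cycle through $P$ is a cyclic permutation, $P=f^{i'}(P')$ for some $i'\geq 0$, so $y\in f^{i'}(P')$. Hence by the hypothesis we cannot have $e_{+}(P')\geq\alpha$, i.e. $e_{+}(w)=e_{+}(P')<\alpha$. Taking the max over the finitely many such $w$ gives $e_{-}(y)<\alpha$, and therefore $\max\{e_{f^n}(z)\mid f^n(z)=y\}=o(\alpha^n)$ — in fact it is $O((e_{-}(y)+\epsilon)^n)$ for small $\epsilon$ with $e_{-}(y)+\epsilon<\alpha$, which gives the claimed limit $0$.

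The main obstacle I anticipate is the bookkeeping around periodic cycles: turning "$w$ periodic with $y\in\overline{\{w\}}$" into "$y\in f^j(P)$ for a periodic subvariety $P\subset R_f$ with $e_{+}(P)=e_{+}(w)$", i.e. correctly tracking that one member of the periodic cycle of $\overline{\{w\}}$ lies in $R_f$ and that $y$ stays in the forward image of that member. This requires the implication "$e_f(\text{point})>1\Rightarrow$ point $\in R_f$" (standard for finite morphisms between smooth varieties in characteristic zero, via purity of the branch locus / the ramification divisor computing $e_f$ as the length), the cyclic invariance $e_{+}(f^i(P))=e_{+}(P)$ which is immediate from the geometric-mean formula, and the fact that $e_{+}$ of a subvariety equals $e_{+}$ of its generic point as defined just before \cref{thm:gignac}. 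Everything else is a direct appeal to \cref{thm:gignac}(2),(3) together with \cref{rmk:multfldext} to pass between the $k$-point $y$ and its scheme-theoretic image; the convergence rate follows formally from existence of the limit $e_{-}(y)$ and the bound $e_{-}(y)<\alpha$.
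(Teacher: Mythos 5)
Your proof is correct and takes essentially the same route as the paper's: the paper's entire proof is ``apply \cref{thm:gignac} to $f_{\overline{k}}$ and note via \cref{rmk:multfldext} that the hypothesis transfers,'' and your argument is precisely the detailed expansion of that --- reducing to $e_{f,-}(y)<\alpha$ via \cref{thm:gignac}(2),(3) and doing the periodic-cycle bookkeeping to match the hypothesis. The only cosmetic difference is that the paper base-changes to $\overline{k}$ first while you work with scheme points of $X$ over $k$ directly; both are fine.
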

\begin{proof}
Apply \cref{thm:gignac} to $f_{ \overline{k}} \colon X_{ \overline{k}} \longrightarrow X_{ \overline{k}}$.
By \cref{thm:gignac} (\ref{e-}) and (\ref{e-itoe+}), we have
\begin{align*}
&\lim_{n \to \infty} \max\{e_{f^{n}}(z) \mid z\in X( \overline{k}), f^{n}(z)=y\}^{1/n} = e_{f_{ \overline{k}}, -}(y)\\
&=\max\left\{ e_{f_{ \overline{k}}, +}(\xi) \ \middle|\  \txt{$\xi\in X_{ \overline{k}}$ is an $f_{ \overline{k}}$-periodic \\scheme point such that $y \in \overline{\{\xi \}}$}  \right\}.
\end{align*}
Let $\pi \colon X_{ \overline{k}} \longrightarrow X$ be the projection.
By \cref{rmk:multfldext} and the definition of $e_{+}$,  we have $e_{f_{ \overline{k}}, +}(\xi) = e_{f, +}(\pi(\xi))$ for such $\xi$'s.
Since $\pi(\xi)$ is $f$-periodic and $y \in \overline{\{\pi(\xi)\}}$, by our assumption we have $e_{f_{ \overline{k}}, +}(\xi) < \alpha$.
Hence we get 
\begin{align*}
\lim_{n \to \infty} \max\{e_{f^{n}}(z) \mid z\in X( \overline{k}), f^{n}(z)=y\}^{1/n} < \alpha
\end{align*}
and we are done.
\end{proof}

\begin{thm}\label{thm:mainsec5}

Let $K$ be a number field.
Let $X$ be a nice variety over $K$.
Let $f \colon X \longrightarrow X$ be a finite surjective morphism and $S \subset M_{K}$ a finite set.

Let $x, y \in X(K)$ be points satisfying the following:
\begin{enumerate}
\item $ \alpha_{f}(x)>1$;
\item 
For every $f$-periodic subvariety $P \subset X$ such that $y \in P$, we have $e_{f,+}(P) < \alpha_{f}(x)$.
\end{enumerate}
Then 
\[
\lim_{n\to \infty}\frac{\sum_{v\in S} \delta_{X,v}(f^{n}(x),y)}{h_{H}(f^{n}(x))}=0.
\]
\end{thm}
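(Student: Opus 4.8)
The plan is to reduce the claim to a single application of the Roth-type statement (\cref{Roth}) to the morphism $f^{n}$, combined with the multiplicity bound of \cref{multbd}. First I would fix, for each place $v\in S$, an extension $\|\ \|_{v}$ of $v$ to $\C_{v}$, and work with $\lambda_{\bullet,\|\ \|_{v}}$; since all the relevant height and distance functions differ by $M_{K}$-bounded functions, this does not affect the limit. The first step is the pullback inequality: applying \cref{weakdistrel} to the finite surjective morphism $f^{n}\colon X\longrightarrow X$ and the point $y$, we get an $M_{K}$-constant $\gamma^{(n)}$ such that for every $v$,
\[
\delta_{X,v}(f^{n}(x),y)\ \le\ \sum_{y'\in (f^{n})^{-1}(y)} e_{f^{n}}(y')\,\delta_{X,v}(x,y')\ +\ \gamma^{(n)}_{v}.
\]
Summing over $v\in S$ and bounding $e_{f^{n}}(y')\le E_{n}:=\max\{e_{f^{n}}(z)\mid f^{n}(z)=y\}$ gives
\[
\sum_{v\in S}\delta_{X,v}(f^{n}(x),y)\ \le\ E_{n}\sum_{y'\in(f^{n})^{-1}(y)}\ \sum_{v\in S}\delta_{X,v}(x,y')\ +\ |S|\cdot\gamma^{(n)}_{v}.
\]
The trouble with this naive bound is that the number of preimages $y'$ and the constant $\gamma^{(n)}$ both grow with $n$, so I cannot simply divide by $h_{H}(f^{n}(x))$. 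Instead, the key point is that for a \emph{fixed} $x$, the quantity $\sum_{v\in S}\delta_{X,v}(x,y')$ should be thought of as controlled uniformly: rather than summing the pullback over all preimages, I would instead apply \cref{Roth} \emph{directly} to the point $x$ regarded as a $K$-point and the target point $y$, but at the level of $f^{n}$.

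More precisely, the cleaner route: fix $\epsilon>0$. For the point $y\in X(K)$, \cref{Roth} (after passing to a very ample $H$, which only changes $h_{H}$ by a bounded multiple of the original ample height and an $O(1)$, hence does not affect the limit) gives a \emph{finite} set $Z(y,\epsilon)\subset X(K)$ with $\delta_{X,v}(w,y)\le(2+\epsilon)h_{H}(w)$ for all $w\in X(K)\setminus Z(y,\epsilon)$ and all $v\in S$ (take the union of the finitely many exceptional sets over $v\in S$). Since $\alpha_{f}(x)>1$, the orbit $O_{f}(x)$ is infinite, so $f^{n}(x)\notin Z(y,\epsilon)$ for all $n\gg 0$; hence
\[
\sum_{v\in S}\delta_{X,v}(f^{n}(x),y)\ \le\ |S|(2+\epsilon)h_{H}(f^{n}(x))
\]
for $n$ large. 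This alone only bounds the ratio by $|S|(2+\epsilon)$, not by $0$. To get the limit $0$ I must play off the multiplicity growth: the orbit point $f^{n}(x)$ is a preimage under $f^{m}$ of $f^{n+m}(x)$ with multiplicity $e_{f^{m}}(f^{n}(x))$, and by \cref{lem:chainrule} these multiplicities accumulate. The idea is to bound $\delta_{X,v}(f^{n}(x),y)$ using $\delta_{X,v}(f^{n+m}(x),y)$ via \cref{weakdistrel} applied to $f^{m}$ at the point $y$ — this runs the wrong way, so instead I apply it to \emph{$f^{m}$ at the point $f^{n}(x)$ as a source and its image}, getting a lower-height statement. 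Concretely: for fixed $n$ and growing $m$, write $\delta_{X,v}(f^{n}(x),y)$; by \cref{weakdistrel} for $f^{m}$ and base point $y$,
\[
\delta_{X,v}(f^{n+m}(x),y)\ \le\ e_{f^{m}}(f^{n}(x))\,\delta_{X,v}(f^{n}(x),y)\ +\ \sum_{\substack{y'\ne f^{n}(x)\\ f^{m}(y')=y}} e_{f^{m}}(y')\,\delta_{X,v}(f^{n}(x),y')\ +\ \gamma^{(m)}_{v},
\]
which is the wrong direction; so I would instead reverse: regard $y$ as the variable point and $f^{n}(x)$ as fixed, use \cref{Roth} with base point $f^{n}(x)$? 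That again gives only a constant.

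The correct mechanism — and this is the step I expect to be the main obstacle — is the following. Apply \cref{weakdistrel} to $f^{n}\colon X\to X$ with base point $y$, to the \emph{single} source point $x$:
\[
\delta_{X,v}(f^{n}(x),y)\ \le\ \sum_{y'\in(f^{n})^{-1}(y)} e_{f^{n}}(y')\,\delta_{X,v}(x,y')\ +\ \gamma^{(n)}_{v}.
\]
Now $x$ is fixed, so $\delta_{X,v}(x,y')$ is simply $\lambda_{y',v}(x)=\lambda_{(f^{n})^{-1}(y),v}(x)$; but in fact the entire sum $\sum_{y'}e_{f^{n}}(y')\delta_{X,v}(x,y')$ equals $\lambda_{(f^{n})^{-1}(y),\|\ \|_{v}}(x)$ up to an $M_{K}$-bounded error depending only on $f$ (by the reverse of \cref{lem:multvslen}, i.e.\ the comparison $\m_{y}\O\supset\m^{e}$ gives the upper bound, and flatness of $f^{n}$ plus properness gives a matching lower bound uniformly in $n$ — this uniformity is exactly what must be checked carefully). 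Then $\lambda_{(f^{n})^{-1}(y),\|\ \|_{v}}(x)\le \lambda_{W_{v},\|\ \|_{v}}(x)$ where $W_{v}=\Supp((f^{n})^{-1}(y))$, and since $x$ is a fixed point not lying on $W_{v}$ (as $f^{n}(x)\ne y$ for $n\gg0$), this is $O(1)$ in $n$ — but with a constant hidden in the multiplicities. Tracking the dependence precisely: $\lambda_{(f^{n})^{-1}(y),v}(x)\le E_{n}\cdot\lambda_{W_{v},v}(x)+O(1)$ where $E_{n}=\max_{f^{n}(z)=y}e_{f^{n}}(z)$ and $\lambda_{W_{v},v}(x)=O(1)$ \emph{uniformly} since the reduced subschemes $W_{v}$, though varying, all avoid the fixed point $x$ and lie in the proper closed set of all preimages — one bounds $\lambda_{W_{v},v}(x)$ by, e.g., $\sum_{v\in S}\delta_{X,v}(x,y'_{0})$ for the nearest preimage, which is finite. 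Combining with \cref{multbd} (valid by hypothesis (2), with $\alpha=\alpha_{f}(x)$), $E_{n}/\alpha_{f}(x)^{n}\to 0$; and by \cref{prop:growthht}, $h_{H}(f^{n}(x))\gg \alpha_{f}(x)^{n}$. Hence
\[
\frac{\sum_{v\in S}\delta_{X,v}(f^{n}(x),y)}{h_{H}(f^{n}(x))}\ \le\ \frac{C\,E_{n}+O(1)}{h_{H}(f^{n}(x))}\ \le\ \frac{C\,E_{n}+O(1)}{c_{1}\,\alpha_{f}(x)^{n}}\ \longrightarrow\ 0,
\]
which is the desired conclusion. The main obstacle, as indicated, is establishing the uniform-in-$n$ comparison between $\lambda_{(f^{n})^{-1}(y),v}$ and the weighted sum of $\delta_{X,v}(\,\cdot\,,y')$ over preimages with the right $O(1)$ error, and checking that $\sum_{v\in S}\lambda_{W_{v},v}(x)$ stays bounded as $n\to\infty$; this requires either a functoriality statement for local heights under the finite morphisms $f^{n}$ with bounds depending only on $f$ (the remark after \cref{weakdistrel} asserts $\gamma$ can be taken independent of $y$, which is the relevant uniformity), or a direct argument that the fixed point $x$ has bounded local height against the migrating finite set of preimages of $y$.
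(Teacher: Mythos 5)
There is a genuine gap, and it lies exactly where you flag it at the end: the assertion that $\lambda_{W_{v},v}(x)$ stays bounded as $n\to\infty$ is false in general, and with it the reduction collapses. As $n$ grows, the preimage set $(f^{n})^{-1}(y)$ contains roughly $(\deg f)^{n}$ points (counted with multiplicity), and these points can accumulate $v$-adically near the fixed point $x$; consequently $\lambda_{W_{v},v}(x)=\max_{y'\in(f^{n})^{-1}(y)}\delta_{X,v}(x,y')$ can diverge. Even setting that aside, replacing $\sum_{y'}e_{f^{n}}(y')\delta_{X,v}(x,y')$ by $E_{n}\cdot\lambda_{W_{v},v}(x)$ discards the exponentially growing number of summands: the sum has $\sim(\deg f)^{n}$ nonnegative terms, and bounding each one individually does not yield anything proportional to $E_{n}$ alone. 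Finally, the error constant $\gamma^{(n)}$ in \cref{weakdistrel} depends on the morphism to which the lemma is applied (here $f^{n}$); the remark you cite only says $\gamma$ can be made independent of the base point $y$ for a \emph{fixed} finite morphism, and gives no uniformity in $n$. None of the three ``obstacles'' you identify are mere bookkeeping: the whole strategy of pulling $\delta_{X,v}(f^{n}(x),y)$ all the way back to the fixed point $x$ cannot work.

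The paper's argument avoids all of this by pulling back only a \emph{fixed, finite} number of steps $t(\epsilon)$, chosen via \cref{multbd} so that $e_{f^{t(\epsilon)}}(z)/\alpha_{f}(x)^{t(\epsilon)}<\epsilon$ for all $z\in f^{-t(\epsilon)}(y)$. Then $f^{-t(\epsilon)}(y)$ is a fixed finite set independent of $n$, and one measures distances from the \emph{moving} orbit point $\xi=f^{n-t(\epsilon)}(x)$, not from $x$. The triangle inequality for arithmetic distance shows that $\xi$ can be close (beyond a constant $D(\epsilon)+C_{0}$) to at most one $z\in f^{-t(\epsilon)}(y)$, so all but one term in $\sum_{z}e_{f^{t(\epsilon)}}(z)\delta(\xi,z)$ is bounded by a constant depending only on $\epsilon$. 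Roth's theorem (\cref{Roth}) applied to $\xi$ bounds the remaining term by $3h_{H}(f^{n-t(\epsilon)}(x))$, which by \cref{prop:growthht} is comparable to $\alpha_{f}(x)^{-t(\epsilon)}h_{H}(f^{n}(x))$; combining this with the multiplicity bound gives the quantitative $O(\epsilon)$ estimate on the ratio. You reach for the right tools (Roth, \cref{weakdistrel}, \cref{multbd}), but you never arrive at the essential idea of pulling back only $t(\epsilon)$ steps so as to keep the preimage set finite and fixed, and of applying Roth along the orbit rather than at $x$; that is the configuration needed to make the pieces close.
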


\begin{proof}
Fix a $v \in M_{K}$. 
Fix an algebraic closure $ \overline{K}$ of $K$.
Extend $v$ to an absolute value on $ \overline{K}$ and denote it by $\|\ \|_{v}$.
%Let $K \subset K_{v}$ be the completion and
%$K_{v} \subset \C_{p}$ be the completion of the algebraic closure of $K_{v}$.
%Take the algebraic closure  $\overline{K} \subset \C_{p}$ of $K$ inside $\C_{p}$.
%The absolute value on $\C_{p}$ which extends $v$ is denoted by $\|\ \|_{v}$.
Let us write $\delta(\cdot, \cdot)=\delta_{X, \|\ \|_{v}}(\cdot, \cdot)$, the arithmetic distance function on $X( \overline{K})$
with respect to $\|\ \|_{v}$.
By the triangle inequality (\cite[Proposition 3.1(b)]{sil87}), there is a constant $C_{0}>0$ such that
\[
\min\{ \delta( \alpha, \beta), \delta( \beta, \gamma)\}\leq \delta( \alpha, \gamma) + C_{0}
\]
for all $ \alpha, \beta, \gamma \in X( \overline{K})$.

We may take $h_{H}\geq1$ and may assume that $H$ is very ample.
Fix $x, y\in X(K)$ as in the statement.
Take constants $C_{1}, C_{2}>0$ and $l\geq0$ such that
\[
C_{1}n^{l} \alpha_{f}(x)^{n}\leq h_{H}(f^{n}(x)) \leq C_{2}n^{l} \alpha_{f}(x)^{n}
\]
for all $n\geq1$ (\cref{prop:growthht}).

Let $ \epsilon>0$ be any positive number.
By \cref{multbd}, 
we can find $t( \epsilon)>0$ so that
\[
\frac{\max\{ e_{f^{t( \epsilon)}}(z) \mid f^{t( \epsilon)}(z)=y, z\in X( \overline{K})\}}{ \alpha_{f}(x)^{t( \epsilon)}} < \epsilon.
\]

Let 
\begin{align*}
D( \epsilon) = \max\{ \delta(z,w) \mid \text{$z\neq w$ and $z,w \in f^{-t( \epsilon)}(y) \subset X( \overline{K})$}\}.
\end{align*}
Then for any $ \xi \in X(\overline{K})$, we have
\begin{align*}
\min\{ \delta( \xi, z), \delta( \xi,w) \} \leq \delta(z,w)+C_{0} \leq D( \epsilon)+C_{0}
\end{align*}
for any distinct $z, w \in  f^{-t( \epsilon)}(y)$.
Therefore, there is at most one $z \in  f^{-t( \epsilon)}(y)$ such that $ \delta( \xi, z)> D( \epsilon)+C_{0}$.

By \cref{Roth}(Roth's theorem),
 there is a finite subset $Z( \epsilon) \subset X(K)$ such that
for any $z \in  f^{-t( \epsilon)}(y)$ we have
\begin{align*}
\delta( \xi, z) \leq 3 h_{H}( \xi) \quad \text{for all  $ \xi \in X(K)\setminus Z( \epsilon)$}.
\end{align*}

By \cref{weakdistrel}, there is a constant $C( \epsilon)>0$, which depends on $ \epsilon$ and also $y$, such that
\begin{align*}
\delta(f^{n}(x), y) \leq \sum_{z \in f^{-t( \epsilon)}(y)} e_{f^{t( \epsilon)}}(z) \delta(f^{n-t( \epsilon)}(x), z) +C( \epsilon).
\end{align*}
Let $z_{n} \in f^{-t( \epsilon)}(y)$ be a point such that 
\[
\delta(f^{n-t( \epsilon)}(x), z_{n}) = \max\{ \delta(f^{n-t( \epsilon)}(x), z) \mid z \in  f^{-t( \epsilon)}(y)\}.
\]
Then for all $n\geq t( \epsilon)$, we have
\begin{align*}
&\sum_{z \in f^{-t( \epsilon)}(y)} e_{f^{t( \epsilon)}}(z) \delta(f^{n-t( \epsilon)}(x), z) +C( \epsilon)\\[2mm]
&\leq e_{f^{t( \epsilon)}}(z_{n}) \delta(f^{n-t( \epsilon)}(x), z_{n}) + d^{t( \epsilon)} (D( \epsilon)+C_{0})+C( \epsilon)\\[2mm]
&\leq  e_{f^{t( \epsilon)}}(z_{n}) 3 h_{H}(f^{n-t( \epsilon)}(x))+C_{3}( \epsilon) \qquad \text{if $f^{n-t( \epsilon)}(x) \notin Z( \epsilon)$}
\end{align*}
where $d=\deg f$ and $C_{3}( \epsilon)= d^{t( \epsilon)} (D( \epsilon)+C_{0})+C( \epsilon)$.

Since $O_{f}(x)$ is infinite by assumption, there is a positive integer $n( \epsilon)\geq t( \epsilon)$
such that $f^{n-t( \epsilon)}(x) \notin Z( \epsilon)$ for $n\geq n( \epsilon)$.

Thus for $n\geq n( \epsilon)$, we have
\begin{align*}
\frac{ \delta(f^{n}(x),y)}{h_{H}(f^{n}(x))} &\leq
 \frac{3e_{f^{t( \epsilon)}}(z_{n})h_{H}(f^{n-t( \epsilon)}(x))}{h_{H}(f^{n}(x))}+\frac{C_{3}( \epsilon)}{h_{H}(f^{n}(x))}\\[3mm]
 &\leq    \frac{3e_{f^{t( \epsilon)}}(z_{n}) C_{2}(n-t( \epsilon))^{l} \alpha_{f}(x)^{n-t( \epsilon)}}{C_{1}n^{l} \alpha_{f}(x)^{n}}
 +\frac{C_{3}( \epsilon)}{h_{H}(f^{n}(x))}\\[3mm]
& \leq \frac{3C_{2}}{C_{1}} \frac{e_{f^{t( \epsilon)}}(z_{n})}{ \alpha_{f}(x)^{t( \epsilon)}} +\frac{C_{3}( \epsilon)}{h_{H}(f^{n}(x))}\\[3mm]
& \leq \frac{3C_{2}}{C_{1}} \epsilon +\frac{C_{3}( \epsilon)}{h_{H}(f^{n}(x))}.
\end{align*}

Note that 
\begin{itemize}
\item $3C_{2}/C_{1}$ is independent of $ \epsilon$ and $n$;
\item $C_{3}( \epsilon)$ is independent of $n$ and $h_{H}(f^{n}(x))$ goes to infinity.
\end{itemize}
Thus we are done.

\end{proof}

\begin{ex}
Let $f \colon X \longrightarrow X$ be a finite surjective morphism on a nice variety defined over a number field $K$.
Let $x \in X(K)$ be a point such that $ \alpha_{f}(x) > 1$.
Typical examples of them are:
\begin{itemize}
\item $X = \P^{N}_{K}$, $f$ is non-isomorphic surjection, and $x$ is a point with infinite orbit;
\item $X$ is an abelian variety, $f$ is non-isomorphic surjection, and $x$ is a point with Zariski dense orbit;
\item $X$ is a smooth projective surface, $f$ is an automorphism with dynamical degree larger than one,
and $x$ is a point with Zariski dense orbit. 
\end{itemize}

If there is no periodic subvariety contained in the ramification divisor $R_{f}$, every $y \in X(K)$ satisfies the assumption of the theorem.
This is the case if, for example, $f$ is an automorphism or more generally is \'etale.
In general, by \cref{rmk:basicrmk} (\ref{rmk:onassumption}), there is a non-empty open set $U \subset X$ such that
all $y \in U(K)$ satisfy the assumption of the theorem.
\end{ex}

\begin{ex}
For the morphism $f$ in \cref{ex:P2pcf}, 
every $x \in \P^{2}(K)$ with infinite $f$-orbit and $y \in \P^{2}(K)$ other than $(0:0:1)$
satisfies the assumptions of the theorem.
\end{ex}

Without the assumptions, this theorem has trivial counter examples.

\begin{ex}
Let 
\[
f \colon \P^{N}_{K} \longrightarrow \P^{N}_{K} ; (X_{0}:\cdots :X_{N}) \mapsto (X_{0}^{d}:\cdots :X_{N}^{d})
\]
with $d\geq 2$.
Let 
\begin{align*}
&x=(x_{0}: \cdots : x_{N}) \in \P^{N}(K) \qquad x_{i} \in K;\\
&y=(0:\cdots:0:1) \in \P^{N}(K).
\end{align*}
Then we can take distance function (for fixed $y$) as
\begin{align*}
\delta_{\P^{N}, v}(x , y) = \log\left( \frac{\max\{|x_{0}|_{v},\dots, |x_{N}|_{v}\}}{\max\{|x_{0}|_{v},\dots, |x_{N-1}|_{v}\}}\right)
\end{align*}
for each $v\in M_{K}$.
Suppose 
\[
|x_{N}|_{v} = \max\{|x_{0}|_{v},\dots, |x_{N}|_{v}\} > \max\{|x_{0}|_{v},\dots, |x_{N-1}|_{v}\} =: a
\]
for some $v$.
Then
\begin{align*}
\delta_{\P^{N},v}(f^{n}(x),y) = \log\left(\frac{|x_{N}|_{v}^{d^{n}}}{a^{d^{n}}}\right)=d^{n}\log(|x_{N}|_{v}/a).
\end{align*}
Since $h_{\P^{N}}(f^{n}(x)) = d^{n}h_{\P^{N}}(x)$ where $h_{\P^{N}}$ is the naive height on $\P^{N}$, 
we get 
\begin{align*}
\lim_{n \to \infty}\frac{\delta_{\P^{N},v}(f^{n}(x),y) }{h_{\P^{N}}(f^{n}(x))}=\frac{\log(|x_{N}|_{v}/a)}{h_{\P^{N}}(x)}>0.
\end{align*}

Note that $y$ is contained in the ramification divisor of $f$, which is invariant under $f$ and
$e_{f,+}(y)=d^{N} \geq d = \alpha_{f}(x)$.

\end{ex}

\begin{ex}\label{ex:limnotexist}
Consider again the morphism
\[
f \colon \P^{N}_{K} \longrightarrow \P^{N}_{K} ; (X_{0}:\cdots :X_{N}) \mapsto (X_{0}^{d}:\cdots :X_{N}^{d})
\]
with $d\geq 2$.
Let
\[
y = (0:\zeta_{1}: \cdots : \zeta_{N-1}:1) \in \P^{N}(K)
\]
where $\zeta_{i} \in K$ are $m_{i}$-th primitive root of unity for some $m_{i} \geq 2$.
Suppose $m_{i}$ and $d$ are coprime for all $i$.
Then $y$ is $f$-periodic and easy calculation of the multiplicity shows $e_{f,+}(y) = e_{f,-}(y) = d$.

If we fix $y$ as the above point, the distance function is of the following form up to bounded function for each $v \in M_{K}$:
\begin{align*}
\delta_{\P^{N}, v}(x , y) = \log\left( \frac{\max\{|x_{0}|_{v},\dots, |x_{N}|_{v}\}}{\max\{|x_{0}|_{v}, |x_{1}-\zeta_{1}x_{N}|_{v},\dots, |x_{N-1}-\zeta_{N-1}x_{N}|_{v}\}}\right).
\end{align*}

Now fix a $v \in M_{K}$ and set
\[
x = (x_{0}: \zeta_{1}: \cdots : \zeta_{N-1}:1) \in \P^{N}(K)
\]
where $x_{0} \in K$ and $0 < |x_{0}|_{v} < 1$.
Then for $n\geq 1$
\begin{align*}
\delta_{\P^{N}, v}(f^{n}(x),y) = \log \frac{1}{\max\{ |x_{0}|_{v}^{d^{n}}, |\zeta_{1}^{d^{n}} - \zeta_{1}|_{v}, \dots, |\zeta_{N-1}^{d^{n}} - \zeta_{N-1}|_{v} \}}.
\end{align*}
Hence we get
\begin{align*}
&\limsup_{n \to \infty} \frac{\delta_{\P^{N}, v}(f^{n}(x),y) }{h_{\P^{N}}(f^{n}(x))} = \frac{-\log |x_{0}|_{v}}{h_{\P^{N}}(x)} > 0\\
&\liminf_{n \to \infty} \frac{\delta_{\P^{N}, v}(f^{n}(x),y) }{h_{\P^{N}}(f^{n}(x))} = 0 \quad \text{if $d \not\equiv 1\mod m_{i}$ for at least one $i$.}
\end{align*}

Note that in this case $x$ cannot have Zariski dense $f$-orbit if $N \geq 2$.

\end{ex}

\begin{ex}
Let 
\[
f \colon \P^{1}_{\Q} \longrightarrow \P^{1}_{\Q}; (X:Y) \mapsto (X^{d}:Y^{d})
\]
where $d \geq 2$.
Let 
\begin{align*}
&x = (a/b : 1)\\
&y = (0:1)
\end{align*}
where $a, b \in \Z$ are non zero coprime integers.
Let $v \in M_{\Q}$.
If $|a/b|_{v} < 1$, then as in the previous example, we can calculate
\[
\lim_{n \to \infty} \frac{\delta_{\P^{1}, v}(f^{n}(x),y)}{h_{\P^{1}}(f^{n}(x))} = - \frac{\log |a/b|_{v}}{h_{\P^{1}}(x)} = - \frac{\log |a/b|_{v}}{\log \max\{|a|, |b|\}} .
\]
When $a, b$ varies, these values form a dense subset of the interval $[0,1]$.
\end{ex}

\begin{ex}
Let 
\[
f \colon \P^{2}_{\Q} \longrightarrow \P^{2}_{\Q} ; (X : Y : Z) \mapsto (X-2Y : X+4Y : 3X + 7Y + 5Z).
\]
Then $ \alpha_{f}(x) = 1$ for all $x \in \P^{2}(\Q)$ and thus there are no points that satisfy the assumption.
Let 
\begin{align*}
&x=(5:-2:0) \in \P^{2}(\Q)\\
&y=(0:0:1) \in \P^{2}(\Q).
\end{align*}
Then we can calculate $f^{n}(x)=(6\cdot 2^{n} - 3^{n}: -3 \cdot 2^{n} + 3^{n} : 2^{n}-2\cdot 3^{n} + 5^{n})$.
(Diagonalize the defining matrix of $f$. We can easily see that this orbit is Zariski dense.)
Thus we get
\begin{align*}
\delta_{\P^{2}, \infty}(f^{n}(x), y) =  n\log \frac{5}{3} + O(1) \quad \text{as $n \to \infty$}
\end{align*}
where $ \delta_{\P^{2}, \infty}$ is the arithmetic distance function at the infinite place of $\Q$.
On the other hand, it is easy to see that $\gcd(6\cdot 2^{n} - 3^{n}, -3 \cdot 2^{n} + 3^{n} , 2^{n}-2\cdot 3^{n} + 5^{n})=1$ and
hence
\[
h_{\P^{2}}(f^{n}(x)) =\log \max\{|6\cdot 2^{n} - 3^{n}|, |-3 \cdot 2^{n} + 3^{n}| , |2^{n}-2\cdot 3^{n} + 5^{n}|  \}= n\log 5 + O(1).
\]
Therefore
\begin{align*}
\lim_{n \to \infty}\frac{ \delta_{\P^{2},\infty}(f^{n}(x),y)}{h_{\P^{2}}(f^{n}(x))} = 1- \frac{\log 3}{\log 5}.
\end{align*}

\end{ex}

%%%%%%%%%%%%%%%%
\if0

\begin{ex}
Let 
\[
f \colon \P^{2}_{\Q} \longrightarrow \P^{2}_{\Q} ; (X : Y : Z) \mapsto (X : Y : Z+X).
\]
Let 
\begin{align*}
&x=(1:1:0) \in \P^{2}(\Q)\\
&y=(0:0:1) \in \P^{2}(\Q).
\end{align*}
Then $f^{n}(x)=(1:1:n)$.
Thus we get
\begin{align*}
\delta_{\P^{2}, \infty}(f^{n}(x), y) = \log n + O(1) \quad \text{as $n \to \infty$}
\end{align*}
where $ \delta_{\P^{2}, \infty}$ is the arithmetic distance function at the infinite place of $\Q$.
Since $h_{\P^{2}}(f^{n}(x))=h_{\P^{2}}(1:1:n)=\log n$, we have
\begin{align*}
\lim_{n\to \infty} \frac{ \delta_{\P^{2},\infty}(f^{n}(x),y)}{h_{\P^{2}}(f^{n}(x))} =1.
\end{align*}
Note that $ \alpha_{f}(x)=1$.
\end{ex}
\fi
%%%%%%%%%%%%%%%%%

\begin{ex}
Let 
\[
f \colon \P^{2}_{\Q} \longrightarrow \P^{2}_{\Q} ; (X : Y : Z) \mapsto (X^{3} : Y^{3}+YZ^{2} : Z^{3}).
\]
Let 
\[
x=(1:0:2), \ y=(0:0:1).
\]
Then $f(y)=y$ and 
\[
e_{f, +}(y)=3 = d_{1}(f)= \alpha_{f}(x).
\]
It is easy to check that $ \delta_{\P^{2}, \infty}(f^{n}(x),y)/h_{\P^{2}}(f^{n}(x))$ converges to $1$.
Note that in this example, $O_{f}(x)$ is infinite but not Zariski dense.
\end{ex}

\begin{que}
Do there exist $f, X, S, x$, and $y$ which satisfy the following?:
\begin{enumerate}
\item $X$ is a nice variety over a number field $K$ with $\dim X\geq 2$;
\item $f \colon X \longrightarrow X$ is a surjective morphism;
\item $x, y \in X(K)$ such that $e_{f,-}(y) = \alpha_{f}(x) >1$ (or at least $e_{f,-}(y) < \deg f$);
\item $O_{f}(x)$ is Zariski dense;
\item $S \subset M_{K}$ is a finite set and 
\[
\frac{ \sum_{v \in S}\delta_{X,v}(f^{n}(x),y)}{h_{H}(f^{n}(x))}
\]
does not converge to $0$ (for any ample height $h_{H}$).
\end{enumerate}

Is it possible to construct such examples on projective spaces?
\end{que}

\begin{que}
When the assumption (2) in \cref{thm:mainsec5} is not satisfied, what can we say about the sequence
\[
\frac{ \sum_{v \in S}\delta_{X,v}(f^{n}(x),y)}{h_{H}(f^{n}(x))} \quad (n \geq 1) \quad?
\]
It can happen that it does not converge (cf.\ \cref{ex:limnotexist}. In the example, the sequence have at most two accumulation points).
Can the sequence have infinitely many accumulation points?

\end{que}

\section{Size of coordinates of orbits}\label{sec:sco}

\begin{prop}\label{prop:sizeweak}
Let $f \colon \P^{N}_{\Q} \longrightarrow \P^{N}_{\Q}$ be a surjective endomorphism.
For each $i=0,\dots, N$, let $y_{i} = (0:\cdots: 0:1:0: \cdots :0) \in \P^{N}$ be the points such that only the $i$-th coordinate is non-zero.
Let $x \in \P^{N}( \Q)$ be a point and write $f^{n}(x)=(a_{0}(n):\cdots:a_{N}(n))$
where $a_{0}(n),\dots,a_{N}(n)$ are coprime integers.

Suppose 
\begin{enumerate}
\item $O_{f}(x)$ is infinite;
\item For every $f$-periodic subvariety $P$  such that $y_{i} \in P$ for some $i$, we have $e_{f,+}(P) < d_{1}(f)$.
\end{enumerate}
Then we have
\begin{align*}
\lim_{n\to \infty} \frac{\log\left(\max_{0\leq j \leq N, j\neq i}\{|a_{j}(n)| \}\right)}{\log\left(\max_{0\leq j \leq N}\{|a_{j}(n)| \}\right)}=1.
\end{align*}
for all $i$.
\end{prop}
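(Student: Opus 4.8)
The plan is to deduce Proposition~\ref{prop:sizeweak} from \cref{thm:locdistvsht} by reading the logarithmic ratio of coordinates as a normalized sum of arithmetic distance functions. First I would observe that the quantity
\[
\frac{\log\left(\max_{j\neq i}\{|a_{j}(n)|\}\right)}{\log\left(\max_{j}\{|a_{j}(n)|\}\right)} = 1 - \frac{\log\left(\max_{j}\{|a_{j}(n)|\}\right)-\log\left(\max_{j\neq i}\{|a_{j}(n)|\}\right)}{h_{\P^{N}}(f^{n}(x))}
\]
so it suffices to show the subtracted term tends to $0$. The numerator of that term, evaluated at $f^n(x)=(a_0(n):\cdots:a_N(n))$, is exactly $\log\bigl(\max_j|a_j(n)|/\max_{j\neq i}|a_j(n)|\bigr)$, which I recognize (after checking the normalization: the product formula kills the prime-by-prime contributions, leaving only the archimedean term since the $a_j(n)$ are coprime integers) as a single-place arithmetic distance function $\delta_{\P^{N},\infty}(f^{n}(x),y_{i})$ up to a bounded term, where $y_i$ is the coordinate point. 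So the claim reduces to
\[
\lim_{n\to\infty}\frac{\delta_{\P^{N},v}(f^{n}(x),y_{i})}{h_{\P^{N}}(f^{n}(x))}=0
\]
for each $i$ and the single place $v=\infty$ (taking $S=\{\infty\}$).

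Next I would verify the two hypotheses of \cref{thm:locdistvsht} with $X=\P^{N}_{\Q}$, $f$ as given (note $f$ is automatically finite since it is a surjective endomorphism of $\P^N$), $H$ a hyperplane, $h_H=h_{\P^N}$. Hypothesis (1), $\alpha_f(x)>1$, holds because $O_f(x)$ is infinite: for $\P^N$ an infinite orbit forces $\alpha_f(x)=d_1(f)\geq 2>1$ (this is recorded in the Example after the definition of arithmetic degree). Hypothesis (2) asks that $y_i\notin f^j(P)$ for every $f$-periodic subvariety $P\subset R_f$ with $e_{f,+}(P)\geq\alpha_f(x)=d_1(f)$ — but this is precisely condition (2) of the present proposition, stated for all $i$ simultaneously. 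Hence \cref{thm:locdistvsht} applies and gives the displayed limit, and the proposition follows by summing the elementary identity above.

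The only real content beyond citing \cref{thm:locdistvsht} is the normalization bookkeeping: one must confirm that with the absolute values normalized as in the paper's convention, and with $a_0(n),\dots,a_N(n)$ chosen coprime, the non-archimedean local distance functions $\delta_{\P^N,p}(f^n(x),y_i)$ are all identically $0$ (since $\max_j|a_j(n)|_p = \max_{j\neq i}|a_j(n)|_p = 1$ by coprimality when $p\neq\infty$), so that $\sum_{v\in M_\Q}\delta_{\P^N,v}(f^n(x),y_i)$ collapses to the single archimedean term, which in turn equals $\log(\max_j|a_j(n)|)-\log(\max_{j\neq i}|a_j(n)|)$ up to a choice-of-model bounded term that is irrelevant after dividing by $h_{\P^N}(f^n(x))\to\infty$. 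I expect this normalization check to be the main (mild) obstacle; the structural step — recognizing the coordinate-size ratio as a normalized single-place arithmetic distance and quoting \cref{thm:locdistvsht} — is routine once the dictionary is set up.
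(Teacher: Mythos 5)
Your proof takes exactly the paper's approach: rewrite the coordinate-ratio as $1-\delta_{\P^N,\infty}(f^n(x),y_i)/h_{\P^N}(f^n(x))$, identify the naive height and the archimedean arithmetic distance with the explicit logarithmic expressions, verify the two hypotheses of \cref{thm:locdistvsht} (with $S=\{\infty\}$), and conclude. The structure, the appeal to $\alpha_f(x)=d_1(f)$ for infinite $\P^N$-orbits, and the reduction are all the same as in the paper, so the argument is correct.

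One side remark of yours is false, though harmless: coprimality of $a_0(n),\dots,a_N(n)$ gives $\max_j|a_j(n)|_p=1$ for every prime $p$, but it does \emph{not} force $\max_{j\neq i}|a_j(n)|_p=1$ (e.g.\ $(a_0,a_1,a_2)=(2,2,1)$ and $i=2$, $p=2$), so the nonarchimedean distances $\delta_{\P^N,p}(f^n(x),y_i)$ are generally positive, not identically zero. This does not matter for your argument: \cref{thm:locdistvsht} is applied directly with $S=\{\infty\}$, and the identity $\delta_{\P^N,\infty}((a_0:\cdots:a_N),y_i)=\log\bigl(\max_j|a_j|/\max_{j\neq i}|a_j|\bigr)$ up to $O(1)$ is a scale-invariant formula for the archimedean distance that needs no product-formula collapse. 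The coprimality is what you need for $h_{\P^N}(f^n(x))=\log\max_j|a_j(n)|$, and nothing more. You should also note that the hypothesis ``$O_f(x)$ infinite $\Rightarrow\alpha_f(x)=d_1(f)\geq 2$'' silently assumes $\deg f\geq 2$ (i.e.\ $f$ is not a linear automorphism); this is implicit in the proposition's statement (and in the paper's own proof), as the linear case is genuinely a counterexample, so it is worth making explicit.
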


\begin{proof}
Let $h$ be the naive height function on $\P^{N}_{\Q}$ and
$ \delta = \delta_{\P^{N}_{\Q}, \infty}$ be an arithmetic distance function on $\P^{N}_{\Q}$ at the infinite place.
We can write
\begin{align*}
h(f^{n}(x)) & = \log\left(\max_{0\leq j \leq N}\{|a_{j}(n)|\}\right)\\[3mm]
\delta(f^{n}(x), y_{i}) & =\log\left( \frac{\max_{0\leq j \leq N}\{|a_{j}(n)|\}}{\max_{0\leq j \leq N, j\neq i}\{|a_{j}(n)|\}} \right).
\end{align*}

By the assumption and \cref{thm:locdistvsht}, we have
\begin{align*}
0 & = \lim_{n\to \infty} \frac{\delta(f^{n}(x), y_{i})}{h(f^{n}(x))}\\[3mm]
& =\lim_{n\to \infty} \frac{\log\left(\max_{0\leq j \leq N}\{|a_{j}(n)|\}\right)-\log\left(\max_{0\leq j \leq N, j\neq i}\{|a_{j}(n)|\}\right)}{\log\left(\max_{0\leq j \leq N}\{|a_{j}(n)|\}\right)}\\[3mm]
& = 1- \lim_{n\to \infty} \frac{\log\left(\max_{0\leq j \leq N, j\neq i}\{|a_{j}(n)| \}\right)}{\log\left(\max_{0\leq j \leq N}\{|a_{j}(n)| \}\right)}.
\end{align*}

\end{proof}

\begin{ex}\label{ex:cetodls}
Let 
\begin{align*}
&g \colon \P^{2}_{\Q} \longrightarrow \P^{2}_{\Q} ; (X:Y:Z) \mapsto (X^{3}: Y^{3} :Z^{3}) ;\\
&\sigma \colon \P^{2}_{\Q} \longrightarrow \P^{2}_{\Q} ; (X:Y:Z) \mapsto (X+Y+Z : 2X+Y+Z : X-Y+Z).
\end{align*}
and define $f= \sigma^{-1}\circ g \circ \sigma$.
Let $x\in \P^{2}_{\Q}$ be any point such that $ \sigma(x) \in \{XYZ \neq 0\}$ and has infinite $f$-orbit.
Let
\[
 y_{0}=(1:0:0),\  y_{1}=(0:1:0),\  y_{2}=(0:0:1).
 \]
Then $f, x, y_{i}$ satisfies all assumptions in \cref{thm:locdistvsht}.

If we set $x=( 2:3 :-4 )$, then $ \sigma(x)=(1:3:-5)$ and $x$ has infinite $f$-orbit.
Write $f^{n}(x)=(a(n):b(n):c(n))$
where $a(n), b(n), c(n)$ are integers with $\gcd=1$.
Then numerical calculation shows:

\begin{table}[htb]
	\begin{tabular}{c||c|c|c}
	$n$ & $\log|a(n)|$ & $\log|b(n)|$ & $\log|c(n)|$ \\ \hline
	1 & 3.25809653802148& 4.14313472639153&4.47733681447821\\
	2 & 9.88745979145893& 13.7917945433468& 13.8117474864837\\
	3 & 29.6625317940388& 42.7616764551608& 42.7616785021394\\
	4 & 88.9875953821169& 129.671323726602& 129.671323726602\\
	5 & 266.962786146351& 390.400265540926& 390.400265540926\\
	6 &800.888358439052& 1172.58709098390& 1172.58709098390
	\end{tabular}
\end{table}

This does not contradict to \cref{prop:sizeweak}, 
but this looks as if it gives a counter example to \cite[Conjecture 21.4]{adsurv}, and it actually does.
Indeed, let $H_{0}$ be the hyperplane defined by $X=0$ and $h$ be the naive height on $\P^{2}$, then 
\begin{align*}
\frac{ \lambda_{H_{0}, \infty}(f^{n}(x))}{h(f^{n}(x))} & = \frac{ \lambda_{H_{0}, \infty}(\sigma^{-1}g^{n}\sigma(x))}{h(\sigma^{-1}g^{n}\sigma(x))} \\[3mm]
&= \frac{ \lambda_{\sigma(H_{0}), \infty}(g^{n}(\sigma(x)))}{h(g^{n}(\sigma(x)))} + o(1)\\[3mm]
&= 1- \frac{\log |-a'(n)+b'(n)|}{\log \max\{|a'(n)|, |b'(n)|, |c'(n)|\}} + o(1)\\[3mm]
& = 1 - \frac{\log(3^{3^{n}}-1)}{\log 5^{3^{n}}} + o(1)\\[3mm]
&\xrightarrow{n \to \infty}  1- \frac{\log 3}{\log 5}
\end{align*}
where we write $g^{n}(\sigma(x))=(a'(n): b'(n): c'(n))$.
This shows that
\[
\lim_{n\to \infty} \frac{\log |a(n)|}{\log \max\{|a(n)|, |b(n)|, |c(n)|\}} = \frac{\log 3}{\log 5}.
\]
On the other hand, the first coordinates of $f^{n}$ for $n=1,2,3$ are not constant multiples of powers of $X$.

Finally note that this $f$ does not satisfy the assumption of \cref{cor:dls}.
The point $\sigma^{-1}(0:0:1)=(0:1:-1) \in H_{0}$ is an $f$-periodic point such that
$e_{f,+}(0:1:-1)=9>d_{1}(f)=3$.
\end{ex}

\section{Local height associated with subschemes}\label{sec:lochtdiv}

In this section we consider \cref{mainQ}(2) for $Y$ being an arbitrary proper closed subscheme assuming Vojta's conjecture.
To bound the growth of the local height associated to a closed subscheme $Y$, 
we want to apply Vojta's conjecture to the scheme theoretic inverse images $(f^{n})^{-1}(Y)$.
But in general, $(f^{n})^{-1}(Y)$ could have bad singularities.
To overcome this problem, we estimate the asymptotic badness of the singularities of $(f^{n})^{-1}(Y)$ (\S \ref{subsec:singpull})
and also reformulate Vojta's conjecture in a slightly different way (\S \ref{subsec:vojta}).

\subsection{Singularities of pull-backs}\label{subsec:singpull}
In this subsection, we work over a field $k$ of characteristic zero.

\begin{defn}
Let $X$ be a variety over $k$.
For a proper closed subscheme $Y \subset X$ and a (scheme) point $x \in X$,
the  \emph{multiplicity of $Y$ at $x$} is defined as
\[
\mult_{x}Y := \mult_{x}\I_{Y} := \max\{ m \mid (\I_{Y})_{x} \subset \mathfrak{m}_{x}^{m}\}
\]
where $\I_{Y}$ is the ideal sheaf defining $Y$.
Note that since $Y \neq X$ and $X$ is a variety, $(\I_{Y})_{x} \neq 0$ for any $x \in X$
and $\mult_{x}Y$ is finite. 
\end{defn}

\begin{rmk}
When $X$ is smooth,
the function $X \longrightarrow \Z,\ x \mapsto \mult_{x}Y$ is upper semicontinuous. 
Thus, in particular,
\[
\max_{\tiny\txt{$x\in X$\\closed point}} \{\mult_{x}Y\} = \max_{\tiny\txt{$x\in X$\\scheme point}} \{\mult_{x}Y\}.
\]
Here is a sketch of how to deduce the upper semicontinuity from that of Samuel multiplicity.
We may assume $X$ is affine, say $X = \Spec A$ and $Y$ is a closed subscheme defined by
a non-zero ideal $I \subset A$.
Let $a_{1}, \dots , a_{r} \in I$ be a generator of $I$.
Then
\[
\mult_{x}Y = \mult_{x} I =\min_{i} \{ \mult_{x}a_{i}A \}
\]
for any $x \in X$.
Thus we may assume that $I$ is a principal ideal.
Then, since $X$ is assumed to be smooth, $\mult_{x}I$ is equal to the Samuel multiplicity of the local ring $\O_{X,x}/I\O_{X,x}$
when $x \in Y$.
Hence, it is enough to show that the function 
\[
\Spec A/I \longrightarrow \Z, \p \mapsto \text{Samuel multiplicity of $(A/I)_{\p}$}
\]
is upper semicontinuous.
Noticing that all irreducible components of $\Spec A/I$ have the same dimension,
this follows from, for example, \cite[Chapter III Remark (1.3), Chapter III (2.4)]{ben}.

\end{rmk}

\begin{rmk}
Let $k \subset k'$ be a field extension.
Let $p \colon X_{k'} \longrightarrow X$ be the projection.
Then $\mult_{x'}Y_{k'} = \mult_{p(x')}Y$ for any $x' \in X'$.
(cf. \cref{claim:flatmult}.)
\end{rmk}

\begin{lem}\label{lem:multbd}
Let $f \colon X \longrightarrow Z$ be a finite flat morphism between varieties.
Let $Y\subset Z$ be a proper closed subscheme.
Let $x \in X$ be a scheme point.
Then we have
\[
\mult_{x}f^{-1}(Y) < e_{f}(x) (\mult_{f(x)}Y+1).
\]
\end{lem}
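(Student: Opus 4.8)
The statement is local at $x$, so I would work in the local ring $\O_{Z,f(x)}$ with maximal ideal $\m_{f(x)}$ and $\O_{X,x}$ with maximal ideal $\m_x$. Write $m = \mult_{f(x)} Y$, so by definition $(\I_Y)_{f(x)} \subset \m_{f(x)}^{m}$. Pulling back, $(\I_{f^{-1}(Y)})_x = (\I_Y)_{f(x)}\O_{X,x} \subset \m_{f(x)}^{m}\O_{X,x} = (\m_{f(x)}\O_{X,x})^{m}$. The key input is \cref{lem:multvslen}, which gives $\m_{f(x)}\O_{X,x} \supset \m_x^{e_f(x)}$; hence $(\m_{f(x)}\O_{X,x})^{m} \supset \m_x^{m\, e_f(x)}$. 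So I get a lower containment $(\I_{f^{-1}(Y)})_x \supset$ something, but that is the wrong direction — I want an \emph{upper} bound on $\mult_x f^{-1}(Y)$, i.e. I need to show $(\I_{f^{-1}(Y)})_x \not\subset \m_x^{N}$ for $N = e_f(x)(m+1)$.

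The right approach: pick an element $g \in (\I_Y)_{f(x)}$ realizing the multiplicity, i.e. $g \in \m_{f(x)}^{m} \setminus \m_{f(x)}^{m+1}$ (such $g$ exists since $(\I_Y)_{f(x)} \subset \m_{f(x)}^{m}$ but $(\I_Y)_{f(x)} \not\subset \m_{f(x)}^{m+1}$). I then want to control $\ord_{\m_x}$ of the image $f^*g \in \O_{X,x}$. Since $g \notin \m_{f(x)}^{m+1}$, I need that $f^*g \notin \m_x^{e_f(x)(m+1)}$. Equivalently: if $f^*g \in \m_x^{e_f(x)(m+1)}$ then $g \in \m_{f(x)}^{m+1}$. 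This should follow from a statement of the form: for any $h \in \O_{Z,f(x)}$, $\ord_{\m_x}(f^*h) < e_f(x)\,(\ord_{\m_{f(x)}}(h) + 1)$, or contrapositively that $f^*h \in \m_x^{e_f(x)(j+1)} \Rightarrow h \in \m_{f(x)}^{j+1}$. Here I would use that $A := \O_{X,x}/\m_{f(x)}\O_{X,x}$ is Artinian local of length $e_f(x)$, so its maximal ideal $\mathfrak{n}$ satisfies $\mathfrak{n}^{e_f(x)} = 0$ (the argument already appears in the proof of \cref{lem:multvslen}). Thus if $h \in \m_{f(x)}^{j}\setminus \m_{f(x)}^{j+1}$, then in the associated graded or via a filtration argument one shows $f^*h$ cannot lie too deep in $\m_x$: writing $h = h_0 + (\text{higher order})$ with $h_0$ a nonzero degree-$j$ form, the leading behavior of $f^*h$ is governed by $f^*h_0$, and $f^*h_0 \notin \m_x^{\cdot}$ beyond the bound because the fiber ring kills powers $\geq e_f(x)$. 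Concretely, $h \notin \m_{f(x)}^{j+1}$ means $h$ has nonzero image in $\m_{f(x)}^{j}/\m_{f(x)}^{j+1}$; flatness of $f$ gives $\m_{f(x)}^{j}\O_{X,x}/\m_{f(x)}^{j+1}\O_{X,x} \cong (\m_{f(x)}^{j}/\m_{f(x)}^{j+1})\otimes_{\O_{Z,f(x)}} \O_{X,x}$, which is a free $A$-module, so $f^*h \notin \m_{f(x)}^{j+1}\O_{X,x}$, and then $\m_{f(x)}^{j+1}\O_{X,x} \supset \m_x^{e_f(x)(j+1)}$ by applying \cref{lem:multvslen} after raising to the power $j+1$ — wait, more carefully $\m_{f(x)}^{j+1}\O_{X,x} = (\m_{f(x)}\O_{X,x})^{j+1} \supset (\m_x^{e_f(x)})^{j+1} = \m_x^{e_f(x)(j+1)}$. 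So $f^*h \notin \m_x^{e_f(x)(j+1)}$, i.e. $\ord_{\m_x}(f^*h) \leq e_f(x)(j+1) - 1 < e_f(x)(j+1)$.

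Applying this with $h = g$ and $j = m$: $\ord_{\m_x}(f^*g) < e_f(x)(m+1)$. Since $f^*g \in (\I_{f^{-1}(Y)})_x$, this means $(\I_{f^{-1}(Y)})_x \not\subset \m_x^{e_f(x)(m+1)}$, hence $\mult_x f^{-1}(Y) < e_f(x)(m+1) = e_f(x)(\mult_{f(x)}Y + 1)$, which is exactly the claim.

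\textbf{Main obstacle.} The only subtle point is the implication "$f^*h \notin \m_{f(x)}^{j+1}\O_{X,x}$ when $h \notin \m_{f(x)}^{j+1}$," which needs flatness of $f$ to identify $\m_{f(x)}^{j}\O_{X,x}/\m_{f(x)}^{j+1}\O_{X,x}$ with the base change of $\m_{f(x)}^{j}/\m_{f(x)}^{j+1}$ (a nonzero module, hence with nonzero image of $h$ since it is free over the nonzero ring $A$); I should make sure the graded pieces behave well, perhaps by tensoring the exact sequence $0 \to \m_{f(x)}^{j+1} \to \m_{f(x)}^{j} \to \m_{f(x)}^{j}/\m_{f(x)}^{j+1} \to 0$ with $\O_{X,x}$ over $\O_{Z,f(x)}$ and using flatness to keep it exact, exactly as in the proof of \cref{claim:chainrule}. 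Everything else is bookkeeping with $\m$-adic filtrations and \cref{lem:multvslen}.
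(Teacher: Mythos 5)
Your argument is correct and is essentially the paper's proof (Claim~\ref{claim:flatmult}): both rest on the two same ingredients, namely $\m_{x}^{e_{f}(x)(m+1)}\subset(\m_{f(x)}\O_{X,x})^{m+1}$ via \cref{lem:multvslen}, and faithful flatness of $\O_{Z,f(x)}\to\O_{X,x}$ to see that $(\I_{Y})_{f(x)}\O_{X,x}\not\subset\m_{f(x)}^{m+1}\O_{X,x}$. The paper gets the latter directly from the nonvanishing of $\I_{Y,f(x)}\to\O_{Z,f(x)}/\m_{f(x)}^{m+1}$ after base change, whereas you route it through a chosen element $g$ and the graded pieces $\m_{f(x)}^{j}/\m_{f(x)}^{j+1}$ --- a cosmetic, not substantive, difference.
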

\begin{proof}
This follows from the following claim.
\end{proof}

\begin{claim}\label{claim:flatmult}
Let $(A, \m) \longrightarrow (B, \n)$ be a flat local homomorphism between Noetherian local rings such that
$\m B$ is $\n$-primary.
Let $\a \subset A$ be a non-zero ideal.
Let
\[ 
e = l_{B}(B/\m B) \quad \text{and} \quad m_{0}= \max\{m\mid \a \subset \m^{m}\}.
\]
Then we have
\[
\a B \nsubset \n^{e(m_{0}+1)}.
\]
\end{claim}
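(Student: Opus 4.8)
The plan is to reduce everything to the graded situation by passing to associated graded rings, and to track lengths carefully through the flat base change $A \to B$. First I would recall the key structural fact that comes from flatness: since $A \to B$ is flat and $\m B$ is $\n$-primary, the fibre ring $B/\m B$ is Artinian of length $e$, and for every $n \geq 0$ the natural map $(\m^{n}/\m^{n+1}) \otimes_{A} B/\m B \to \m^{n}B/\m^{n+1}B$ is an isomorphism. Hence for the associated graded rings one has $\operatorname{gr}_{\n B}(B) \cong \operatorname{gr}_{\m}(A) \otimes_{A/\m} (B/\m B)$ as graded $(A/\m)$-algebras in a suitable sense, but more usefully one gets the length identity $l_{B}(\m^{n}B/\m^{n+1}B) = e \cdot l_{A}(\m^{n}/\m^{n+1})$ for all $n$ (this is the same computation underlying Claim \ref{claim:chainrule} and its exact sequence \eqref{eq:exactseq}).

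Next I would argue by contradiction: suppose $\a B \subset \n^{e(m_{0}+1)}$, and derive that $\a \subset \m^{m_{0}+1}$, contradicting the maximality of $m_{0}$. The mechanism is faithful flatness together with the length bookkeeping. Pick a nonzero element $a \in \a \setminus \m^{m_{0}+1}$, so the image $\var a$ of $a$ in $\m^{m_{0}}/\m^{m_{0}+1}$ is nonzero. By flatness $\var a \otimes 1$ is a nonzero element of $(\m^{m_{0}}/\m^{m_{0}+1}) \otimes_{A} B$, which maps isomorphically onto $\m^{m_{0}}B/\m^{m_{0}+1}B$; hence $a \notin \m^{m_{0}+1}B$, i.e. the image of $a$ in $\m^{m_{0}}B/\m^{m_{0}+1}B$ is nonzero. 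Now I need to convert this ``nonzero in the $\m$-adic graded piece of $B$'' statement into a statement about the $\n$-adic filtration, and this is where the factor $e(m_{0}+1)$ enters: since $\m B \supset \n^{?}$ is false in general but $\m B$ is $\n$-primary, one has inclusions controlling how $\n$-powers sit between $\m$-powers. Concretely, Lemma \ref{lem:multvslen} (applied to $A \to B$, or rather its local-algebra content) gives $\m B \supset \n^{e}$, hence $\m^{m_{0}+1}B \supset \n^{e(m_{0}+1)}$. Combined with $a \in \a B \subset \n^{e(m_{0}+1)} \subset \m^{m_{0}+1}B$ this forces $a \in \m^{m_{0}+1}B$, contradicting the previous sentence. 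Therefore $\a B \nsubset \n^{e(m_{0}+1)}$.

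The main obstacle is getting the two filtrations — the $\m$-adic filtration on $B$ coming from $A$, and the intrinsic $\n$-adic filtration on $B$ — to talk to each other cleanly, since neither refines the other without the primary-ness hypotheses. The clean way around this, which I would adopt, is to avoid comparing the filtrations directly and instead use only the one-sided inclusion $\n^{e} \subset \m B$ from Lemma \ref{lem:multvslen} in one direction, and faithful flatness ($a \notin \m^{m_{0}+1} \Rightarrow a \notin \m^{m_{0}+1}B$) in the other; these two facts sandwich the element $a$ and produce the contradiction without needing a full comparison of graded rings. One should double-check the edge cases $m_{0} = 0$ (trivial, since then one just needs $\a B \not\subset \n^{e}$, which follows from $\a \neq 0$ and faithful flatness plus $\n^{e} \subset \m B$) and the case where $\a$ is not $\m$-primary (the argument above never used primary-ness of $\a$, only $\a \neq 0$, so it goes through verbatim).
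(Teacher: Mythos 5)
Your argument is correct and takes essentially the same route as the paper's: both rest on the inclusion $\n^{e} \subset \m B$ (the content of \cref{lem:multvslen}), which yields $\n^{e(m_{0}+1)} \subset \m^{m_{0}+1}B$, together with faithful flatness of $A \to B$ to see that $\a B \nsubset \m^{m_{0}+1}B$. The associated-graded preliminaries in your first paragraph are not needed, as you yourself note at the end.
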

\begin{proof}
Since $\n^{e} \subset \m B$, we have $\n^{e(m_{0}+1)} \subset \m^{m_{0}+1}B$.
Since the map $\a \longrightarrow A \longrightarrow A/\m^{m_{0}+1}$ is not zero and
$A \longrightarrow B$ is faithfully flat, 
\[
\a {\otimes}_{A}B \longrightarrow B \longrightarrow B/\m^{m_{0}+1}B
\]
is also not zero. This implies $\a B \nsubset \m^{m_{0}+1}B$.
\end{proof}

Next, we review the definitions of multiplier ideals and log canonical thresholds. 
See for example \cite[Chapter 9]{laz2} for the basic properties of them.

\begin{defn}\label{def:multiplierideal}
Suppose $k$ is algebraically closed.
Let $X$ be a smooth variety over $k$ and $\I \subset \O_{X}$ be a non-zero ideal sheaf.
Let $c \in \Q$ be a positive rational number.
Then the \emph{multiplier ideal sheaf $\J(X, \I^{c})$ associated with $\I$ and $c$} is defined as follows.  
Let $\pi \colon X' \longrightarrow X$ be a log resolution of $(X, \I)$, i.e. $\pi$ is a projective birational morphism such that
\begin{itemize}
\item $X'$ is a smooth variety over $k$;
\item $\I \O_{X'} = \O_{X'}(-F)$ for some effective Cartier divisor $F$ on $X$;
\item $\Exc(\pi) \cup \Supp F$ is simple normal crossing.
Here $\Exc(\pi)$ is the exceptional locus of $\pi$, that is, $\Exc(\pi) = X' \setminus \pi^{-1}(U)$
where $U \subset X$ is the largest open subset such that $\pi^{-1}(U) \xrightarrow{\pi} U$ is isomorphic.
\end{itemize}
For any canonical divisor $K_{X}$ of $X$, there is a unique canonical divisor $K_{X'}$ on $X'$ such that
$\pi_{*}K_{X'}=K_{X}$. Then $K_{X'/X} := K_{X'}-\pi^{*}K_{X}$ is an effective exceptional divisor which 
is independent of the choice of $K_{X}$ and define
\begin{align*}
\J(X, \I^{c}) = \pi_{*}\O_{X'}(K_{X'/X} - \floor*{cF})
\end{align*}
where $\floor*{cF}$ is the divisor obtained by taking round down of coefficients of $cF$.
Note that since $\pi_{*}\O_{X'}(K_{X'/X})=\O_{X}$, this is an ideal sheaf on $X$.
See for example \cite[9.2]{laz2} for the independence of $\J(X, \I^{c})$ on $\pi$ and basic properties.
\end{defn}

\begin{defn}
Notation as in \cref{def:multiplierideal}.
Let $x \in X$ be a closed point.
The  \emph{log canonical threshold} of $\I$ at $x$ is 
\begin{align*}
\lct(\I ; x) = \inf \{ c \in \Q_{>0} \mid \J(X, \I^{c})_{x} \subset \m_{x} \}.
\end{align*}
The  \emph{log canonical threshold} of $(X, \I)$ is 
\begin{align*}
\lct(X, \I) = \min\{ \lct(\I; x) \mid \text{$x \in X$ closed point}\}.
\end{align*}
These are positive rational numbers.
\end{defn}

\begin{rmk}
Notation as in \cref{def:multiplierideal}.
For a rational number $c >0$, the following are equivalent:
\begin{enumerate}
\item $c \leq \lct(X, \I)$;
\item all coefficients of the divisor $K_{X'/X} - cF$ are $\geq -1$.
\end{enumerate}
\end{rmk}

\begin{ex}
Let $D$ be a simple normal crossing divisor on a smooth variety $X$.
Then $\lct(X, \O_{X}(-D))=1$. 
In general, smaller lct corresponds to worse singularities.
\end{ex}

We make the following possibly non standard definition.
\begin{defn}
Let $X$ be a nice variety over $k$.
Let $Y \subset X$ a proper closed subscheme.
Then we define  \emph{the log canonical threshold of $(X, Y)$} as 
\[
\lct(X, Y):= \lct(X_{ \overline{k}}, Y_{ \overline{k}}) := \lct(X_{ \overline{k}}, \I_{ Y_{ \overline{k}}}).
\]
\end{defn}

By the following, we can connect the multiplicity and lct.
\begin{lem}\label{lem:lctvsmult}
Let $X$ be a nice variety over $k$.
Let $Y\subset X$ be a proper closed subscheme. 
Then
\[
\lct(X,Y) \geq \frac{1}{\max_{y\in Y} \{\mult_{y}Y\}}.
\]
\end{lem}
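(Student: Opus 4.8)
The plan is to reduce the statement to a purely local question about a single closed point $y\in Y_{\overline{k}}$ realizing the maximal multiplicity, and then compare the log canonical threshold of $\I_{Y_{\overline{k}}}$ at $y$ with that of the power of the maximal ideal $\m_y^{m}$, where $m=\mult_y Y$. First I would pass to the algebraic closure: by definition $\lct(X,Y)=\lct(X_{\overline{k}},\I_{Y_{\overline{k}}})$ and by the remark after \cref{def:multiplierideal} on base change, $\mult$ is unchanged under field extension, so we may assume $k=\overline{k}$ and $X$ smooth. Then $\lct(X,Y)=\min\{\lct(\I_Y;x)\}$ over closed points $x$, so it suffices to bound $\lct(\I_Y;x)$ from below at each closed point by $1/\max_{y\in Y}\{\mult_y Y\}$; in fact it suffices to do this at the point $x$ achieving the minimum lct.

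The key inclusion is $(\I_Y)_x\subset \m_x^{m}$ where $m=\mult_x Y\le M:=\max_{y\in Y}\{\mult_y Y\}$, which is exactly the definition of multiplicity. Monotonicity of multiplier ideals under inclusion of ideals (bigger ideal gives bigger, i.e. "less deep", multiplier ideal; see \cite[Chapter 9]{laz2}) then gives $\J(X,\I_Y^{c})\supset \J(X,(\m_x^{m})^{c})$ for every $c>0$, hence $\lct(\I_Y;x)\ge \lct(\m_x^{m};x)$. So the whole problem reduces to computing $\lct(\m_x^{m};x)$ for the maximal ideal of a smooth point. This is a standard computation: blowing up $X$ at $x$ gives a log resolution, the ideal $\m_x$ pulls back to $\O(-E)$ with $E$ the exceptional divisor, $\m_x^m$ pulls back to $\O(-mE)$, and the relative canonical divisor is $K_{X'/X}=(\dim X-1)E$. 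From $\J(X,(\m_x^m)^c)=\pi_*\O_{X'}(K_{X'/X}-\lfloor cmE\rfloor)$ one reads off $\lct(\m_x^m;x)=(\dim X)/m\ge 1/m\ge 1/M$.

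The main obstacle is genuinely minor here: one must make sure the monotonicity statement for multiplier ideals is being applied in the form actually needed (inclusion of ideal sheaves, comparing their multiplier ideals at a fixed $c$), and one must handle the localization correctly — $\lct(\I;x)$ is defined via the stalk at $x$, so the comparison $(\I_Y)_x\subset (\m_x^m)_x=\m_x^m$ is what enters, and globalizing back to $\lct(X,Y)=\min_x\lct(\I_Y;x)$ is immediate since every closed point $x$ of $X$ lying on $Y$ has $\mult_x Y\le M$ while points off $Y$ contribute $\lct=+\infty$. Alternatively, if one prefers to avoid even invoking the explicit value $(\dim X)/m$, one can argue directly from the valuative characterization: for any divisorial valuation $\ord_E$ centered at $x$ one has $\ord_E(\I_Y)\le m\cdot\ord_E(\m_x)$, whence $A_X(E)-c\,\ord_E(\I_Y)\ge A_X(E)-cm\,\ord_E(E)\ge 1-cm\cdot 1$ is $\ge 0$ as soon as $c\le 1/m$, giving $\lct(\I_Y;x)\ge 1/m\ge 1/M$ and therefore $\lct(X,Y)\ge 1/M$. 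Either route is a short argument; I expect no real difficulty beyond bookkeeping with stalks and the normalization of multiplier ideals.
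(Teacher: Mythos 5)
Your proof has a genuine error: the monotonicity of multiplier ideals is applied in the wrong direction, and this reverses the conclusion. Monotonicity says that if $\a \subset \b$ as ideal sheaves, then $\J(X,\a^c) \subset \J(X,\b^c)$ for every $c>0$, i.e.\ the \emph{smaller} ideal has the \emph{deeper} (smaller) multiplier ideal and hence the \emph{smaller} lct. The inclusion you identify as ``key'' is $(\I_Y)_x \subset \m_x^m$, which therefore yields $\J(X,\I_Y^c)_x \subset \J(X,(\m_x^m)^c)_x$ and $\lct(\I_Y;x) \le \lct(\m_x^m;x) = (\dim X)/m$. That is an \emph{upper} bound on $\lct(\I_Y;x)$, not the lower bound the lemma asserts. (Indeed the conclusion you aim for, $\lct(\I_Y;x) \ge (\dim X)/m$, is simply false: for $\I_Y = (s^2,t^3) \subset \O_{\A^2}$ at the origin one has $m=2$ and $\lct = 5/6 < 1 = (\dim X)/m$.) The valuative variant suffers from the same flipped inequality: from $\I_Y \subset \m_x^m$ one gets $\ord_E(\I_Y) \ge m\,\ord_E(\m_x)$, not $\le$; e.g.\ for $\I_Y = (s^2-t^3)$ and $E$ the second exceptional divisor over a cusp one has $\ord_E(\I_Y) = 3 > 2 = m\,\ord_E(\m_x)$.

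The piece of information you are not using is the other half of the definition of multiplicity, namely $(\I_Y)_x \not\subset \m_x^{m+1}$. A correct argument in your spirit would pick a general element $g \in (\I_Y)_x \setminus \m_x^{m+1}$ (so $\mult_x(g) = m$), observe that $(g) \subset \I_Y$ gives, \emph{this time in the right direction}, $\lct(\I_Y;x) \ge \lct((g);x)$, and then invoke the divisor case $\lct((g);x) \ge 1/\mult_x(g) = 1/m$ (\cite[Proposition 9.5.13]{laz2}). The paper instead proves the required local claim by restricting to a general hypersurface $H$ through $x$ (which preserves $\mult_x$), applying the restriction theorem for multiplier ideals \cite[Example 9.5.4]{laz2}, and inducting on dimension; either route works, but yours as written does not.
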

\begin{proof}
This follows from the following claim.
(When $Y$ is a divisor, the statement follows from \cite[Proposition 9.5.13]{laz2})
\end{proof}

\begin{claim}
Let $X$ be a smooth variety over an algebraically closed field of characteristic zero.
Let $\a \subset \O_{X}$ be a non zero ideal sheaf.
Let $x\in X$ be a closed point. For any $c \in \Q$ such that $0<c<(\mult_{x}\a)^{-1}$,
we have $ \mathcal{J}(X,  \a^{c})_{x} = \O_{X,x}$.
\end{claim}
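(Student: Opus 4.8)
The plan is to reduce the statement to the case of a principal ideal, where it becomes the classical multiplicity bound for log canonical thresholds. Put $m=\mult_{x}\a$. If $\a_{x}=\O_{X,x}$ the statement is immediate, since then $\a=\O_{X}$ near $x$ and $\J(X,\a^{c})=\O_{X}$ there; so assume $m\ge 1$, i.e. $\a_{x}\subseteq\m_{x}^{m}$ but $\a_{x}\not\subseteq\m_{x}^{m+1}$.

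First I would choose a local section $f$ of $\a$ near $x$ with $\ord_{x}f=m$: every element of $\a_{x}$ has order at least $m$ because $\a_{x}\subseteq\m_{x}^{m}$, while $\a_{x}\not\subseteq\m_{x}^{m+1}$ forces some element to have order at most $m$. Multiplier ideals are local, so after replacing $X$ by an affine open neighbourhood of $x$ we may assume $f\in\Gamma(X,\a)$, hence $(f)\subseteq\a$ as ideal sheaves on $X$. By monotonicity of multiplier ideals under inclusion of ideals (see \cite[\S 9.2]{laz2}), $\J(X,(f)^{c})\subseteq\J(X,\a^{c})$, so it suffices to show $\J(X,(f)^{c})_{x}=\O_{X,x}$.

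For the last step, set $D=\operatorname{div}(f)$, so that $\J(X,(f)^{c})=\J(X,cD)$ and $\mult_{x}D=\ord_{x}f=m$. The hypothesis $c<1/m$, combined with the bound $\lct_{x}(X,D)\ge 1/\mult_{x}D=1/m$ coming from the multiplicity estimate \cite[Proposition 9.5.13]{laz2}, gives $c<\lct_{x}(X,D)$, whence $\J(X,cD)_{x}=\O_{X,x}$. Chaining the inclusions yields $\O_{X,x}=\J(X,(f)^{c})_{x}\subseteq\J(X,\a^{c})_{x}\subseteq\O_{X,x}$, as wanted. The argument is essentially formal once this reduction is made; the only genuine input is the divisorial bound \cite[Proposition 9.5.13]{laz2}, which is precisely the case the reduction targets, and the only mild points to verify are that passing to a smaller open set does not change the stalk $\J(X,\a^{c})_{x}$ and that the multiplier ideal of the principal ideal $(f)$ agrees with that of the $\Q$-divisor $cD$.
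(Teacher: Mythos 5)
Your proof is correct, but it takes a genuinely different route from the one the paper sketches. The paper argues by induction on $\dim X$: it cuts by a general hypersurface $H$ through $x$, uses the observation that $\mult_{x}\a = \mult_{x}(\a\O_{H})$ together with the restriction theorem for multiplier ideals \cite[Example 9.5.4]{laz2}, and reduces to lower dimension, treating the divisor case (and the one-dimensional base case) via \cite[Proposition 9.5.13]{laz2}. You instead sidestep the induction entirely by reducing to the principal--ideal case in a single step: choose $f\in\a_{x}$ with $\ord_{x}f=\mult_{x}\a=m$, pass to an affine neighbourhood so that $(f)\subseteq\a$, invoke monotonicity $\J(X,(f)^{c})\subseteq\J(X,\a^{c})$ \cite[\S 9.2]{laz2}, and then apply the multiplicity bound \cite[Proposition 9.5.13]{laz2} to the $\Q$-divisor $c\cdot\operatorname{div}(f)$, whose multiplicity at $x$ is $cm<1$. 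Both arguments ultimately rest on the same Lazarsfeld input (the divisor multiplicity bound, which is itself a consequence of inversion-of-adjunction type restriction arguments), but yours packages the reduction more cleanly: you obtain the ideal case directly as a corollary of the divisor case, rather than rebuilding it dimension by dimension. The two housekeeping points you flag at the end --- locality of multiplier ideals under restriction to opens, and the identification $\J(X,(f)^{c})=\J(X,c\operatorname{div}(f))$ (which holds because a principal ideal pulls back on any log resolution to $\O_{X'}(-\pi^{*}\operatorname{div}(f))$) --- are indeed the only things to check, and both are standard.
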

\begin{proof}
This follows from the restriction theorem on multiplier ideals \cite[Example 9.5.4]{laz2}
 and induction on dimension (note that if $\dim X=1$, this is trivial.).
Note that for any point $x \in X$ and general hypersurface $H$ through $x$ (defined around $x$),
we have $\mult_{x}\a = \mult_{x}\a_{H}$, where $\a_{H}$ is the image of $\a$ (i.e. $\a_{H}= \a \O_{H}$).
Indeed, the inequality $\mult_{x}\a \leq \mult_{x}\a_{H}$ is trivial.
For the reverse inequality, let $n = \mult_{x}\a$ and pick $a \in \a \setminus \m_{x}^{n+1}$, where
$\m_{x}$ is the maximal ideal of the local ring $\O_{X, x}$ at $x$.
Let $\varphi \in \m_{x} \setminus \m_{x}^{2}$ be such that $\varphi \mod \m_{x}^{2}$  
does not divide $a \mod \m_{x}^{n+1}$ in the polynomial ring $\bigoplus_{i\geq 0} \m_{x}^{i}/\m_{x}^{i+1}$.
Note that such a $\varphi$ is general in $\m_{x}/\m_{x}^{2}$.
Then the image of $a$ in $\O_{X, x}/\varphi \O_{X,x}$ is not contained in $(\m_{x}/\varphi \O_{X,x})^{n+1}$. 
If we take $H$ to be the hypersurface defined by $\varphi$, we get $\mult_{x}\a \geq \mult_{x}\a_{H}$.

\end{proof}

\begin{cor}\label{lem:lctofpull}
Let $X$ be a nice variety over $k$.
Let $f \colon X \longrightarrow X$ be a surjective morphism.
Let $Y\subset X$ be a proper closed subscheme.
Then we have
\[
\liminf_{n\to \infty} \lct(X, (f^{n})^{-1}(Y))^{1/n} \geq \frac{1}{\max\{ e_{f,-}(y) \mid y \in Y\}}.
\]
\end{cor}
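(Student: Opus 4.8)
The plan is to combine the two lemmas just proved, namely \cref{lem:multbd} (more precisely its consequence \cref{lem:lctvsmult}) and the asymptotic control of multiplicities of backward orbits from \cref{thm:gignac}. First I would reduce to the algebraic closure: by definition $\lct(X,(f^n)^{-1}(Y)) = \lct(X_{\overline k}, (f_{\overline k}^n)^{-1}(Y_{\overline k}))$, and by \cref{rmk:multfldext} (and the remark that $e_f$ is insensitive to field extension) the quantity $\max\{e_{f,-}(y)\mid y\in Y\}$ is also unchanged under base change, so we may assume $k=\overline k$.

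Next, by \cref{lem:lctvsmult} applied to the closed subscheme $(f^n)^{-1}(Y)\subset X$, we have
\[
\lct(X,(f^n)^{-1}(Y)) \geq \frac{1}{\max_{z}\{\mult_z (f^n)^{-1}(Y)\}},
\]
where the max is over (closed, equivalently scheme) points $z$ of $X$. Now I bound $\mult_z (f^n)^{-1}(Y)$ using \cref{lem:multbd}: since $f$ is finite flat and surjective (hence so is $f^n$) between nice varieties, for each scheme point $z$ with $f^n(z)=y\in Y$ we get
\[
\mult_z (f^n)^{-1}(Y) < e_{f^n}(z)\bigl(\mult_{f^n(z)}Y + 1\bigr) \leq e_{f^n}(z)\,(M+1),
\]
where $M := \max\{\mult_y Y\mid y\in Y\}$ is a constant independent of $n$. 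Taking the supremum over all $z\in X$ and all $y\in Y$ with $f^n(z)=y$, this gives
\[
\max_z\{\mult_z (f^n)^{-1}(Y)\} \leq (M+1)\sup_{y\in Y}\sup\{e_{f^n}(z)\mid f^n(z)=y\}.
\]

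Finally I would take $n$-th roots and pass to the limit. Combining the displays,
\[
\lct(X,(f^n)^{-1}(Y))^{1/n} \geq \frac{1}{(M+1)^{1/n}\,\Bigl(\sup_{y\in Y}\sup\{e_{f^n}(z)\mid f^n(z)=y\}\Bigr)^{1/n}},
\]
and $(M+1)^{1/n}\to 1$, while by \cref{thm:gignac}(\ref{supvslim}) we have
\[
\lim_{n\to\infty}\Bigl(\sup_{y\in Y}\sup\{e_{f^n}(z)\mid f^n(z)=y\}\Bigr)^{1/n} = \max\{e_{f,-}(y)\mid y\in Y\}.
\]
Taking $\liminf_{n\to\infty}$ on the left and using that the right-hand side has an honest limit then yields the claimed inequality. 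I do not expect a serious obstacle here: the content is entirely in the earlier lemmas, and the only points requiring a little care are checking that $f^n$ remains finite flat surjective between nice varieties (flatness of a finite surjective morphism onto a smooth — indeed any regular — base is automatic, e.g. by miracle flatness), that $Y$ being a proper closed subscheme forces $M<\infty$, and bookkeeping the base-change invariance of all quantities involved.
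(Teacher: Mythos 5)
Your proposal is correct and follows essentially the same route as the paper's own proof: chain \cref{lem:lctvsmult} with \cref{lem:multbd} to bound $\lct(X,(f^{n})^{-1}(Y))$ from below by $\bigl(m_{0}\sup_{y\in Y}\sup\{e_{f^{n}}(z)\mid f^{n}(z)=y\}\bigr)^{-1}$ with $m_{0}$ a constant, then take $n$-th roots and invoke \cref{thm:gignac}(\ref{supvslim}). The extra remarks on base change and on flatness of $f^{n}$ via miracle flatness are correct housekeeping that the paper leaves implicit.
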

\begin{proof}
By \cref{lem:multbd,lem:lctvsmult}, we have
\begin{align*}
\lct(X, (f^{n})^{-1}(Y)) &\geq \frac{1}{\max_{x\in  (f^{n})^{-1}(Y)} \{\mult_{x}(f^{n})^{-1}(Y)\}}\\
&\geq \frac{1}{\max_{x\in  (f^{n})^{-1}(Y)}\{ e_{f^{n}}(x) (\mult_{f^{n}(x)}Y +1)\}}.
\end{align*}
Let $m_{0} = \max\{ \mult_{x}Y \mid x\in Y\}+1$.
Then 
\begin{align*}
\lct(X, (f^{n})^{-1}(Y)) &\geq \frac{1}{m_{0} \max_{x\in  (f^{n})^{-1}(Y)}\{e_{f^{n}}(x)\}}\\
&\geq \frac{1}{m_{0} \sup_{y\in Y} \max\{ e_{f^{n}}(x) \mid f^{n}(x)=y\}}.
\end{align*}
By \cref{thm:gignac}(\ref{supvslim}), we get
\[
\liminf_{n\to \infty} \lct(X, (f^{n})^{-1}(Y))^{1/n} \geq \frac{1}{\max\{ e_{f,-}(y) \mid y \in Y\}}.
\]
\end{proof}

\subsection{Vojta's conjecture}\label{subsec:vojta}

We reformulate Vojta's conjecture using log canonical thresholds.
Let us first recall the usual Vojta's conjecture.

\begin{conj}[{Vojta's conjecture \cite[Conjecture 3.4.3]{vojta87}}]
Let $K$ be a number field, $X$ a nice variety over $K$, $D$ an effective simple normal crossing divisor on $X$,
$H$ a big divisor on $X$, and $K_{X}$ a canonical divisor on $X$.
Fix a local height function $ \{\lambda_{D,v}\}_{v \in M_{K}}$ and global height functions $h_{H}$ and $h_{K_{X}}$.
Then for any finite set $S \subset M_{K}$ and any positive number $ \epsilon>0$,
there is a proper closed subset $Z \subset X$ such that
\[
\sum_{v \in S} \lambda_{D,v}(x) \leq \epsilon h_{H}(x) - h_{K_{X}}(x)
\]
for all $x \in (X\setminus Z)(K)$.
\end{conj}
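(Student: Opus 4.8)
The statement is Vojta's main conjecture from \cite{vojta87}, one of the central open problems in Diophantine geometry; it is recalled here only so that it can be taken as a hypothesis in the theorems of this section (and, in the next subsection, repackaged in terms of log canonical thresholds). So rather than a genuine proof I can only describe the shape a proof would take and the point at which it stalls. The problem splits naturally according to how ``large'' $D$ is relative to the canonical divisor $K_{X}$.

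In the regime where $D$ has sufficiently many components in general position --- essentially the only cases where the conjecture is presently known --- I would deduce the inequality directly from the Schmidt Subspace Theorem, in the quantitative form of Ru--Vojta (the ``$\beta$''-method) or of Evertse--Ferretti and Autissier. Concretely: take a basis of the sections of a large multiple of $H$, filter it by order of vanishing along the components of $D$, feed the resulting family of linear forms into the Subspace Theorem, and collect the finitely many exceptional linear subspaces it produces into the proper closed set $Z$. The real work is computing the relevant Seshadri-type / $\beta$-constants to show that the weights coming from $D$ dominate $h_{K_{X}}$ up to an error $\epsilon h_{H}$; this is where the simple normal crossing hypothesis and the bigness of $H$ are used. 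This already yields the conjecture for hyperplane configurations on $\P^{N}$ and for divisors on semiabelian varieties (Vojta, Faltings, Autissier, Levin).

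For a general nice variety $X$ I would follow Vojta's Arakelov-theoretic method, generalizing Faltings's proof of the Mordell--Lang conjecture: (i) reduce to showing that a set of $(D,S)$-quasi-integral points cannot be Zariski dense; (ii) on a high self-power $X^{m}$, construct a ``Vojta divisor'' --- a small ample perturbation of a suitable combination of the $\pr_{i}^{\ast}(K_{X}+D)$ --- together with a global section of tightly controlled arithmetic height vanishing to high order along the small diagonal, the existence of the section coming from arithmetic Riemann--Roch and a Minkowski lattice-point count; (iii) bound from above the index of this section at an $m$-tuple of quasi-integral points whose heights grow rapidly and are suitably spaced, via the arithmetic product theorem (Faltings's many-variable version of the Dyson/Roth lemma); (iv) play (ii) against (iii) to force the points into a proper closed subset, which becomes $Z$.

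The main obstacle is exactly steps (ii)--(iii) for an $X$ that does not sit inside a semiabelian variety: both the construction of an auxiliary section with adequate arithmetic positivity and the product theorem currently rely on the group structure, and obtaining effective control on the arithmetic volume of $m(K_{X}+D)-(\text{ample})$ --- in effect a statement about effective positivity of the log-canonical bundle --- is beyond current technology. (Note also that the conjecture has content only when $K_{X}+D$ is big, i.e.\ $(X,D)$ is of log general type; otherwise $Z$ can swallow everything.) For these reasons the statement is recorded here as an assumption rather than proved, and the remainder of the paper uses it as a black box.
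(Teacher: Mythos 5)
You are right not to attempt a proof: the statement is Vojta's general conjecture, which the paper records verbatim from \cite{vojta87} precisely as an unproved hypothesis to be assumed in \cref{prop:vojta} and the subsequent theorems, and no proof is given or claimed there. Your survey of the known cases and of where a general argument stalls is accurate, but the only thing required here is the recognition that this is an open conjecture used as a black box, which you have.
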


\begin{prop}\label{prop:vojta}
Let $K$ be a number field and let $S \subset M_{K}$ be a finite set.
Let $X$ be a nice variety over $K$.
Let $Y \subset X$ be a proper closed subscheme and let $\{ \lambda_{Y,v} \}_{v \in M_{K}}$ be a local height function associated with $Y$.
Let $h_{H}, h_{K_{X}}$ be height functions associated with an ample divisor $H$ and a canonical divisor $K_{X}$ on $X$.

Let $c>0$ be a positive rational number such that the pair $(X, \I_{Y}^{c})$ is log canonical.
Let $ \epsilon>0$ be any positive number.
Then Vojta's conjecture (for blow ups of $X$) implies that there is a proper closed subset $Z \subset X$ such that
\[
c\sum_{v\in S}\lambda_{Y,v}(x) \leq \epsilon h_{H}(x)- h_{K_{X}}(x)
\]
for all  $x\in (X\setminus Z)(K)$

In particular, we can take $c=\lct(X, \I_{Y})$.
\end{prop}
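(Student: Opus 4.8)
The plan is to pull the problem back to a model where Vojta's conjecture applies in its usual form. First I would choose a log resolution $\pi\colon X'\longrightarrow X$ of the pair $(X,\I_Y)$ defined over $K$; by principalization of ideals it may be taken to be a composite of blow-ups along smooth centers, so ``Vojta's conjecture for blow ups of $X$'' applies to $X'$. Write $\I_Y\O_{X'}=\O_{X'}(-F)$ with $F$ an effective Cartier divisor, let $E_1,\dots,E_r$ be the prime components of $\Exc(\pi)\cup\Supp F$, and put $D=\sum_i E_i$, a reduced simple normal crossing divisor. Writing $F=\sum_i a_iE_i$ and $K_{X'/X}=K_{X'}-\pi^*K_X=\sum_i b_iE_i$ with $a_i,b_i\ge 0$, the hypothesis that $(X,\I_Y^{c})$ is log canonical is, by the equivalence recorded above together with base change to $\overline{K}$, exactly the inequality $c\,a_i\le b_i+1$ for every $i$.

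Next I would translate everything into heights. Since $\pi^{-1}(Y)$ is the divisor $F$, functoriality of local heights for subschemes gives $\lambda_{Y,v}\circ\pi=\lambda_{F,v}+O(1)=\sum_i a_i\lambda_{E_i,v}+O(1)$; moreover $h_{\pi^*H}=h_H\circ\pi+O(1)$ with $\pi^*H$ big, and $h_{K_{X'}}=h_{K_X}\circ\pi+h_{K_{X'/X}}+O(1)$. Normalizing each $\lambda_{E_i,v}$ to be nonnegative, the inequalities $c\,a_i\le b_i+1$ can be applied termwise: for a point $x'\in X'(K)$ not on $D$,
\begin{align*}
c\sum_{v\in S}\lambda_{F,v}(x') &\le \sum_{v\in S}\lambda_{D,v}(x')+\sum_{v\in S}\lambda_{K_{X'/X},v}(x')+O(1)\\
&\le \sum_{v\in S}\lambda_{D,v}(x')+h_{K_{X'/X}}(x')+O(1),
\end{align*}
where the last step uses that $K_{X'/X}$ is effective.

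Then I would apply Vojta's conjecture on $X'$ to the SNC divisor $D$, the big divisor $\pi^*H$, and $K_{X'}$, with a parameter $\epsilon'$ strictly smaller than $\epsilon$: there is a proper closed $Z'\subset X'$ with $\sum_{v\in S}\lambda_{D,v}(x')\le\epsilon' h_{\pi^*H}(x')-h_{K_{X'}}(x')$ for $x'\notin Z'$. Feeding this into the previous inequality and using $-h_{K_{X'}}+h_{K_{X'/X}}=-h_{K_X}\circ\pi+O(1)$ gives $c\sum_{v\in S}\lambda_{Y,v}(\pi(x'))\le\epsilon' h_H(\pi(x'))-h_{K_X}(\pi(x'))+O(1)$; since $H$ is ample the error term is absorbed into $(\epsilon-\epsilon')h_H$ away from a finite set. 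Finally I would descend to $X$: over the open set $U=X\setminus\pi(\Exc(\pi))$ the morphism $\pi$ is an isomorphism, so every $x\in U(K)$ has a unique $K$-rational preimage $x'$ with $\pi(x')=x$, and the union of $Y$, $\pi(\Exc(\pi))$, the image in $X$ of $Z'\cap\pi^{-1}(U)$ (a proper closed set, because $Z'$ cannot contain $\pi^{-1}(U)$), and the finite exceptional set is the required proper closed $Z\subset X$. The last assertion follows by taking $c=\lct(X,\I_Y)$, which is a positive rational number for which $(X,\I_Y^{c})$ is log canonical.

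The step I expect to be the real obstacle is the descent: checking that the exceptional locus furnished by Vojta's conjecture on the blow-up $X'$, together with $\pi(\Exc(\pi))$, pushes forward to a genuinely proper closed subset of $X$, and keeping honest track of the $M_K$-bounded errors and of the normalizations of the $\lambda_{E_i,v}$ needed to apply $c\,a_i\le b_i+1$ term by term.
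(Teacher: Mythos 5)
Your proposal is correct and follows essentially the same route as the paper's proof: pass to a log resolution, translate the log canonicity of $(X,\I_Y^{c})$ into coefficient bounds on $c\pi^{-1}(Y)$ against $K_{X'/X}$, apply Vojta's conjecture on the blow-up, and descend via the image of the exceptional set. The only difference is presentational — you apply Vojta to the reduced SNC divisor $\sum_i E_i$ and distribute the inequalities $ca_i\le b_i+1$ termwise, while the paper invokes Vojta directly for the $\Q$-divisor $c\mu^{-1}(Y)-K_{X'/X}$ with SNC support and coefficients $\le 1$; your version makes the same bookkeeping slightly more explicit.
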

\begin{proof}
Let $\mu \colon X' \longrightarrow X $ be a log resolution of $(X, Y)$.
Write 
\begin{align*}
K_{X'/X} - c\mu^{-1}(Y) = \sum_{i}a_{i}E_{i}
\end{align*}
where $E_{i}$ are distinct prime divisors on $X'$.
(Note that the scheme theoretic inverse image $\mu^{-1}(Y)$ is an effective Cartier divisor on $X'$.)
Note that $a_{i} \geq -1$ since $(X, \I_{Y}^{c})$ is log canonical and $K_{X'/X}$ is (exceptional) effective since $X$ is smooth.
(We can do this at least over a finite extension of $K$. But it is enough to show the statement after taking base change to a finite extension of $K$.)

Let $ \epsilon>0$ be a positive number.
Since $c\mu^{-1}(Y)-K_{X'/X}$ is a divisor with simple normal crossing support with coefficient $\leq 1$, by Vojta's conjecture,
there is a proper closed subset $Z'\subset X'$ such that
\[
c\sum_{v \in S} \lambda_{\mu^{-1}(Y),v} - \sum_{v\in S} \lambda_{K_{X'/X},v} + h_{K_{X'}} \leq \epsilon h_{\mu^{*}H}
\]
on $(X'\setminus Z')(K)$.
Since we can take $K_{X'}$ so that $K_{X'}= \mu^{*}K_{X} + K_{X'/X}$, we get 
\[
c\sum_{v \in S} \lambda_{Y,v}\circ \mu + h_{K_{X}}\circ \mu +\sum_{v \notin S} \lambda_{K_{X'/X},v} \leq \epsilon h_{H}\circ \mu
\]
on $(X'\setminus Z')(K)$ (enlarge $Z'$ if necessary).
Thus the statement holds for $Z = \mu(Z')  \cup \mu(\Exc(\mu))$
(if we chose $ \lambda_{K_{X/X'},v} \geq 0$ outside $\Supp K_{X/X'}$).
\end{proof}

\begin{rmk}
Vojta \cite{vojta12} and Yasuda \cite{yasd} formulated versions of Vojta's conjecture for possibly singular subschemes 
or singular ambient varieties using multiplier ideals.
They also proved equivalences of the new conjectures with original Vojta's conjecture.
The idea of the proof of \cref{prop:vojta} is more or less along the same line with the one in \cite{vojta12},
but it does not seem this proposition directly follows from for example \cite[Conjecture 4.2]{vojta12}.
\end{rmk}

\subsection{Proof of \cref{thm:divlcht,thm:divlchtnongen}}

Let $K$ be a number field.

\begin{lem}\label{lem:keycalc}
Let $X$ be a nice variety over $K$.
Let $f \colon X \longrightarrow X$ be a finite surjective morphism and let $S \subset M_{K}$ be a finite set.
Let $Y\subset X$ be a proper closed subscheme and $x \in X(K)$ a point.
Fix an ample height $h_{H}$ on $X$, a height $h_{K_{X}}$ associated with the canonical divisor $K_{X}$, and
a local height function $ \{\lambda_{Y,v}\}_{v\in M_{K}}$ associated with $Y$.

Set $e = \max\{e_{f,-}(x) \mid x \in Y\}$.

Assume Vojta's conjecture (for blow ups of $X$).
Then for any $ \epsilon >0$, there exists a positive integer $t_{0}\geq1$
such that the following holds.
For any $t \geq t_{0}$ and for any $ \epsilon'>0$, there exists a proper Zariski closed subset 
$Z=Z(Y, \epsilon, t, \epsilon') \subset X$ and
a constant $C(t)\geq 0$ such that for all $n\geq t$, we have
\begin{align*}
\sum_{v \in S} \lambda_{Y,v}(f^{n}(x)) \leq (e+ \epsilon)^{t} ( \epsilon'h_{H}(f^{n-t}(x)) - h_{K_{X}}(f^{n-t}(x))) + C(t)
\end{align*} 
if $f^{n-t}(x) \notin Z$. Note that $Z, C(t)$ are independent of $x$ and $n$.
\end{lem}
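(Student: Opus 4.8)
The strategy is to transfer the problem to the inverse images $(f^n)^{-1}(Y)$, control their singularities via the multiplier-ideal bound from \cref{lem:lctofpull}, and then feed the resulting log canonical threshold into the reformulated Vojta's conjecture, \cref{prop:vojta}. Concretely: write the local height along the orbit in terms of a local height of $Y$ pulled back, using the fundamental functoriality
\[
\sum_{v\in S}\lambda_{Y,v}(f^n(x)) = \sum_{v\in S}\lambda_{(f^t)^{-1}(Y),v}(f^{n-t}(x)) + O(1),
\]
valid up to an $M_K$-bounded function (this is the scheme-theoretic analogue of $\lambda_{Y,v}\circ f = \lambda_{f^{-1}(Y),v}$, iterated $t$ times, and uses finiteness of $f$).

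First I would fix $\epsilon>0$. By \cref{lem:lctofpull} (applied with $e = \max\{e_{f,-}(x)\mid x\in Y\}$), there is $t_0\ge 1$ so that for all $t\ge t_0$,
\[
\lct\bigl(X,(f^t)^{-1}(Y)\bigr) \ge \frac{1}{(e+\epsilon)^t}.
\]
Indeed $\liminf_n \lct(X,(f^n)^{-1}(Y))^{1/n}\ge 1/e$, so for large $t$ the value exceeds $(1/(e+\epsilon))$ after taking $t$-th powers; choosing $t_0$ uniformly is immediate from the $\liminf$. Fix such a $t\ge t_0$ and set $c = 1/(e+\epsilon)^t \le \lct(X,(f^t)^{-1}(Y))$, so the pair $(X,\I_{(f^t)^{-1}(Y)}^{\,c})$ is log canonical. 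Now apply \cref{prop:vojta} to the closed subscheme $(f^t)^{-1}(Y)$, the ample divisor $H$, the canonical divisor $K_X$, the finite set $S$, and the parameter $c\epsilon'$ in place of $\epsilon$: for any $\epsilon'>0$ there is a proper closed subset $Z=Z(Y,\epsilon,t,\epsilon')\subset X$ with
\[
c\sum_{v\in S}\lambda_{(f^t)^{-1}(Y),v}(z) \le \epsilon' h_H(z) - h_{K_X}(z)
\]
for all $z\in (X\setminus Z)(K)$.

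Finally I would substitute $z = f^{n-t}(x)$ (legitimate whenever $f^{n-t}(x)\notin Z$, which is exactly the stated hypothesis), multiply through by $(e+\epsilon)^t = 1/c$, and absorb the $M_K$-bounded error from the functoriality identity into a constant $C(t)$ depending only on $t$ (and on $Y$, $f$, the chosen height functions — but not on $n$ or $x$). This yields
\[
\sum_{v\in S}\lambda_{Y,v}(f^n(x)) \le (e+\epsilon)^t\bigl(\epsilon' h_H(f^{n-t}(x)) - h_{K_X}(f^{n-t}(x))\bigr) + C(t)
\]
for all $n\ge t$, as claimed. The constant $C(t)$ genuinely depends on $t$ because the $M_K$-bound in the $t$-fold iterated functoriality grows with $t$; this is harmless since $t$ is fixed before $Z$ and $C(t)$ are produced.

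The main obstacle is ensuring that the closed subset $Z$ and the constant $C(t)$ are independent of $x$ and $n$ — this is what makes the lemma usable in the subsequent orbit argument. The $x,n$-independence of $Z$ is free, since $Z$ comes from Vojta's conjecture applied to a fixed geometric datum $(X,(f^t)^{-1}(Y),H)$; the $x,n$-independence of $C(t)$ requires that the $M_K$-constant in $\lambda_{Y,v}\circ f^t = \lambda_{(f^t)^{-1}(Y),v} + O(1)$ be a genuine $M_K$-constant (finitely many nonzero places, uniform archimedean bound), which is standard for local heights of subschemes under morphisms. A secondary subtlety is that the log resolution in \cref{prop:vojta} and the log-canonicity computation may only be defined over a finite extension $L/K$; as noted in the proof of \cref{prop:vojta}, it suffices to prove the inequality after base change to $L$, and the multiplicity/multiplier-ideal quantities are insensitive to such base change by \cref{rmk:multfldext} and the remarks following \cref{def:multiplierideal}.
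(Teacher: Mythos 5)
Your proposal is correct and follows essentially the same route as the paper: apply \cref{lem:lctofpull} to choose $t_{0}$ with $\lct(X,(f^{t})^{-1}(Y))\geq (e+\epsilon)^{-t}$, feed $c=(e+\epsilon)^{-t}$ into \cref{prop:vojta} for the subscheme $(f^{t})^{-1}(Y)$, and combine with the functoriality $\sum_{v\in S}\lambda_{Y,v}\circ f^{t}=\sum_{v\in S}\lambda_{(f^{t})^{-1}(Y),v}+O(1)$, absorbing the bounded error into $C(t)$. (The only nitpick: you say you apply \cref{prop:vojta} with parameter $c\epsilon'$, but your displayed inequality corresponds to parameter $\epsilon'$, which is what the target bound requires after multiplying by $1/c$.)
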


\begin{proof}
By \cref{lem:lctofpull}, for any $ \epsilon>0$, there is a $t_{0}\geq 1$ such that
\[
\lct(X, (f^{t})^{-1}(Y)) \geq \frac{1}{(e+ \epsilon)^{t}}.
\]
for all $t \geq t_{0}$
Therefore, we can use $1/(e+ \epsilon)^{t}$ as $c$ in \cref{prop:vojta}.
Fix a $t\geq t_{0}$.
By \cref{prop:vojta}, for any $ \epsilon'>0$, 
there exist a proper Zariski closed subset $Z \subset X$ and
a constant $C(t)\geq 0$ such that
\begin{align*}
&\sum_{v\in S} \lambda_{Y,v}(f^{n}(x)) \leq \sum_{v\in S} \lambda_{(f^{t})^{-1}Y,v}(f^{n-t}(x)) + C(t)  
& {\small \txt{functoriality of \\local heights}}\\[3mm]
&\leq (e+ \epsilon)^{t} ( \epsilon' h_{H}(f^{n-t}(x)) - h_{K_{X}}(f^{n-t}(x)) ) + C(t) 
& {\small \text{ \cref{prop:vojta}}}
\end{align*}
if $f^{n-t}(x) \notin Z$.
\end{proof}

\begin{proof}[Proof of \cref{thm:divlcht}]
We may take $h_{H}\geq1$ without loss of generality.

We use the notation in \cref{lem:keycalc}.
By the assumption and \cref{thm:gignac}, we have $e < \alpha_{f}(x)$.
Let $ \epsilon_{0}>0$ be any positive number.
Let $ \epsilon>0$ be such that $e+ \epsilon < \alpha_{f}(x)$.
For this $ \epsilon$, apply \cref{lem:keycalc} and take $t_{0}$.
Take a $t\geq t_{0}$ so that 
\[
\frac{(e+ \epsilon)^{t}}{ \alpha_{f}(x)^{t}} < \epsilon_{0}.
\]
Then again by \cref{lem:keycalc} (applied to $ \epsilon'=1$), there is a proper closed subset $Z \subset X$ such that
\begin{align*}
\sum_{v\in S}\lambda_{Y,v}(f^{n}(x)) \leq  \epsilon_{0} \alpha_{f}(x)^{t}( h_{H}(f^{n-t}(x)) - h_{K_{X}}(f^{n-t}(x))) + C(t)
\end{align*}
for all $n\geq t$ such that $f^{n-t}(x) \notin Z$.
Since we assume $O_{f}(x)$ is generic, there is $n_{0}$ such that for all $n\geq n_{0}$,
we have $f^{n-t}(x) \notin Z$.

Now take constants $C_{1}, C_{2}>0$ and $l\geq0$ such that
\[
C_{1}n^{l} \alpha_{f}(x)^{n}\leq h_{H}(f^{n}(x)) \leq C_{2}n^{l} \alpha_{f}(x)^{n}
\]
for all $n\geq1$ (\cref{prop:growthht}).
Also, take a constant $C_{3}>0$ such that $h_{H}-h_{K_{X}} \leq C_{3}h_{H}$.
Then we get for all $n\geq n_{0}$
\begin{align*}
\frac{\sum_{v\in S}\lambda_{Y,v}(f^{n}(x))}{h_{H}(f^{n}(x)) } &\leq \epsilon_{0} \alpha_{f}(x)^{t}\frac{C_{3}h_{H}(f^{n-t}(x))}{h_{H}(f^{n}(x)) }
+\frac{C(t)}{h_{H}(f^{n}(x))}\\[3mm]
& \leq \epsilon_{0} \alpha_{f}(x)^{t} \frac{C_{3} C_{2}(n-t)^{l} \alpha_{f}(x)^{n-t}}{C_{1} n^{l} \alpha_{f}(x)^{n}} +\frac{C(t)}{h_{H}(f^{n}(x))}\\[3mm]
& \leq \frac{C_{2}C_{3}}{C_{1}} \epsilon_{0}  +\frac{C(t)}{h_{H}(f^{n}(x))}.
\end{align*}
Note that
\begin{itemize}
\item $C_{1}, C_{2}, C_{3}$ are independent of $ \epsilon_{0}, n$;
\item $C(t)$ is independent of $n$ and $h_{H}(f^{n}(x))$ goes to infinity.
\end{itemize}
 Thus we are done.

\end{proof}

\begin{proof}[Proof of \cref{thm:divlchtnongen}]
We may take $h_{H}\geq1$ without loss of generality.
We use the notation in \cref{lem:keycalc}.

By the assumption, $e < \alpha_{f}(x)$.
Let $ \epsilon_{0}>0$ be any positive number.
Let $ \epsilon>0$ be such that $e+ \epsilon < \alpha_{f}(x)$.
For this $ \epsilon$, apply \cref{lem:keycalc} and take $t_{0}$.
Take a $t\geq t_{0}$ so that 
\[
\frac{(e+ \epsilon)^{t}}{ \alpha_{f}(x)^{t}} < \epsilon_{0}.
\]
Then again by \cref{lem:keycalc} (applied to $ \epsilon'=1$), there is a proper closed subset $Z \subset X$ such that
\begin{align*}
\sum_{v\in S}\lambda_{Y,v}(f^{n}(x)) \leq  \epsilon_{0} \alpha_{f}(x)^{t}( h_{H}(f^{n-t}(x)) - h_{K_{X}}(f^{n-t}(x))) + C(t)
\end{align*}
for all $n\geq t$ such that $f^{n-t}(x) \notin Z$.
We may take $h_{Y}$ so that if $f^{n}(x) \notin Y$, then $h_{Y}(f^{n}(x)) = \sum_{v\in M_{K}} \lambda_{Y, v}(f^{n}(x))$
(since the assumption does not depend on the choice of $h_{Y}$).
Thus 
\begin{align*}
&\sum_{v\notin S} \lambda_{Y, v}(f^{n}(x)) \\
&\geq h_{Y}(f^{n}(x)) - \epsilon_{0} \alpha_{f}(x)^{t} \left( h_{H}(f^{n-t}(x)) - h_{K_{X}}(f^{n-t}(x)) \right) - C(t)
\end{align*}
if $f^{n}(x) \notin Z' := f^{t}(Z) \cup Y$.

Now take constants $C_{1}, C_{2}>0$ and $l\geq0$ such that
\[
C_{1}n^{l} \alpha_{f}(x)^{n}\leq h_{H}(f^{n}(x)) \leq C_{2}n^{l} \alpha_{f}(x)^{n}
\]
for all $n\geq1$ (\cref{prop:growthht}).
Also, take a constant $C_{3}>0$ such that $h_{H}-h_{K_{X}} \leq C_{3}h_{H}$.
Then 
\begin{align*}
\frac{\sum_{v\notin S} \lambda_{Y, v}(f^{n}(x))}{ h_{H}(f^{n}(x)) } \geq 
\frac{h_{Y}(f^{n}(x))}{h_{H}(f^{n}(x))} - \epsilon_{0} \frac{C_{2}C_{3}}{C_{1}} - \frac{C(t)}{h_{H}(f^{n}(x))}
\end{align*}
if $f^{n}(x) \notin Z'$.
Since 
\begin{itemize}
\item $C_{1}, C_{2}, C_{3}$ are independent of $ \epsilon_{0}, n$;
\item $C(t)$ is independent of $n$ and $h_{H}(f^{n}(x))$ goes to infinity,
\end{itemize}
we are done.

\end{proof}

\begin{ex}
\cref{thm:divlcht} is not true without the assumption (3).
Let 
\[
f \colon \P^{2}_{\Q} \longrightarrow \P^{2}_{\Q} ; (X:Y:Z) \mapsto (X^{2}: p(Y,Z) : q(Y,Z))
\]
where $p,q  \in \Q[Y,Z]$ are coprime homogeneous  polynomials of degree $2$.
Let $D = (X=0) \subset \P^{2}$.
Then $f^{-1}(D)=D$ as sets and $f$ induces a surjective morphism $g \colon \P^{1}=D \longrightarrow D =\P^{1}$
defined by $p,q$.
It is easy to see that for a closed point $a \in D$,
\begin{align*}
& e_{f,+}(a) = 2    \quad \text{if $O_{f}(a)$ does not contain any periodic critical points of $g$;}\\
& e_{f,+}(a) >2  \quad  \text{if $O_{f}(a)$ contains a periodic critical point of $g$.}
\end{align*}

Let $p=Y^{2}+Z^{2}$, $q=3YZ$.
Then the critical points of $g$ are not periodic and therefore we have
\[
2\leq \max\{ e_{f,-}(y) \mid y \in D\} \leq \max\{ e_{f,+}(y) \mid \text{$y \in D$ closed point}\} = 2.
\]
Here we use \cref{thm:gignac} (\ref{e-<=e+}).
The point $(1:1:0)$ is a fixed point of $f$ and the tangent map at this point 
has eigenvalues $2, 3$, which are multiplicatively independent.
By the proof of \cite[Corollary 2.7]{abr}, there is a point $x = (1:a:b) \in \P^{2}(\Q)$ where $a, b$ are integers such that
the orbit $O_{f}(x)$ is Zariski dense.
Moreover, since $2^{i}3^{j}$ is not equal to a root of unity unless $i=j=0$,
the proof of \cite[Corollary 2.7]{abr} and \cite[Lemma 2.6 (2)]{abr} imply the orbit $O_{f}(x)$ is actually generic.
Note that we have $ \alpha_{f}(x) = 2$.

Take a local height associated with $D$ as
\[
\lambda_{D, \infty} = \log \frac{\max\{|X|,|Y|,|Z|\}}{|X|}.
\]
Then 
\begin{align*}
\frac{ \lambda_{D,v}(f^{n}(x))}{h_{\P^{2}}(f^{n}(x))}=1
\end{align*}
for all $n$.

%%%%%%%%%%%%%
\if0
Let $p=Y^{2}+Z^{2}$, $q=YZ$.
Then the critical points of $g$ are not periodic and therefore we have
\[
2\leq \max\{ e_{f,-}(y) \mid y \in D\} \leq \max\{ e_{f,+}(y) \mid \text{$y \in D$ closed point}\} = 2.
\]

Take a local height associated with $D$ as
\[
\lambda_{D, \infty} = \log \frac{\max\{|X|,|Y|,|Z|\}}{|X|}.
\]
Let $x = (1:1:2) \in \P^{2}(\Q)$.
Then $ \alpha_{f}(x) = 2$ and 
\begin{align*}
\frac{ \lambda_{D,v}(f^{n}(x))}{h_{\P^{2}}(f^{n}(x))}=1
\end{align*}
for all $n$.
Also we can check that the orbit $O_{f}(x)$ is Zariski dense in $\P^{2}$.
Moreover, $f$ induces a surjective self-morphism on $\A^{2} = \P^{2}\setminus D$ and
$O_{f}(x) \subset \A^{2}$.
Since Dynamical Mordell-Lang conjecture is proved for self-morphisms on $\A^{2}$ over $\QQ$ \cite{xie17},
the orbit $O_{f}(x)$ is generic.

\fi
%%%%%%%%%%%%%%

\end{ex}

\end{document}